\numberwithin{equation}{section}
\newcounter{mynote}% a new counter for use in margin notes
\newtheorem{theorem}{Theorem}[section]
\newtheorem{proposition}[theorem]{Proposition}
\newtheorem{lemma}[theorem]{Lemma}
\newtheorem{corollary}[theorem]{Corollary}
\newtheorem*{theorem*}{Theorem}
\newtheorem*{claim*}{Claim}
\newtheorem*{proposition*}{Proposition}
\newtheorem*{lemma*}{Lemma}
\newtheorem*{corollary*}{Corollary}
\newtheorem{theoremA}{Theorem}
\theoremstyle{definition}
\newtheorem{definition}[theorem]{Definition}
\newtheorem{remark}[theorem]{Remark}
\newtheorem{example}[theorem]{Example}
\newtheorem{conjecture}[theorem]{Conjecture}
\newtheorem*{definition*}{Definition}
\newtheorem*{observation*}{Observation}
\newtheorem*{remark*}{Remark}
\newtheorem*{example*}{Example}
\newtheorem*{question*}{Question}
\newtheorem*{exercise*}{Exercise}
\newtheorem*{fact*}{Fact}
\newtheorem*{notation*}{Notation}
\newcommand{\bbE}{\mathbb{E}}
\newcommand{\bbN}{\mathbb{N}}
\newcommand{\bbP}{\mathbb{P}}
\newcommand{\bbZ}{\mathbb{Z}}
\newcommand{\calG}{\mathcal{G}}
\newcommand{\calI}{\mathcal{I}}
\newcommand{\calM}{\mathcal{M}}
\newcommand{\calN}{\mathcal{N}}
\newcommand{\calP}{\mathcal{P}}
\newcommand{\calS}{\mathcal{S}}
\newcommand{\usigma}{\underline{\sigma}}
\newcommand{\ualpha}{\underline{\alpha}}
\newcommand{\inj}{\hookrightarrow}
\newcommand{\ii}{^{-1}}
\newcommand{\gen}[1]{\left< #1 \right>}
\newcommand{\fpf}{fixed-point-free~}
\newcommand{\FPF}{\mathcal{F}}
\newcommand{\FR}[1]{{#1}^{(\infty)}}
\newcommand{\comm}{\dot\simeq}
\DeclareMathOperator{\Stab}{Stab}
\DeclareMathOperator{\id}{id}
\DeclareMathOperator{\Aut}{Aut}
\DeclareMathOperator{\Sym}{Sym}
\DeclareMathOperator{\Alt}{Alt}
\DeclareMathOperator{\BMW}{BMW}
\DeclareMathOperator{\iBMW}{BMW_{inv}}
\title{Counting lattices in products of trees}
\author{Nir Lazarovich, Ivan Levcovitz and Alex Margolis}
\thanks{NL is supported by the Israel Science Foundation (grant no. 1562/19), and by the German-Israeli Foundation for Scientific Research and Development.}
\begin{document}
	\maketitle
\begin{abstract}
    A BMW group of degree $(m,n)$ is a group that acts simply transitively on vertices of the product of two regular trees of degrees $m$ and $n$.
    We show that the number of commensurability classes of BMW groups of degree $(m,n)$ is bounded between $(mn)^{\alpha mn}$ and $(mn)^{\beta mn}$ for some $0<\alpha<\beta$. In fact, 
    we show that 
    the same bounds hold for virtually simple BMW groups. 
    We introduce a random model for BMW groups of degree $(m,n)$ and show that asymptotically almost surely a random BMW group in this model is irreducible and hereditarily just-infinite.
\end{abstract}

\section{Introduction}
Given $n\in \bbN$, let $T_n$ denote the regular tree of valence $n$.
A \emph{BMW group of degree $(m,n)$} is a subgroup of $\Aut(T_m)\times \Aut(T_n)$ that  acts simply transitively on the vertex set of  $T_m\times T_n$.
Using these groups,
Wise \cite{wise2007Complete} and Burger-Mozes \cite{burgermozes2000lattices} produced the first examples of non-residually finite and virtually simple CAT(0) groups respectively.
BMW groups have been extensively studied and have rich connections to the study of automata groups  and commensurators (see Caprace's survey \cite{Caprace-survey}).

In this paper, our goal is two-fold: (1) estimate the number of BMW groups 
and virtually simple BMW groups
up to abstract commensurability, and (2) define and study a random model for BMW groups. 
Although conceptually related, the two parts are independently presented.

\subsection*{Counting BMW groups.}
Let $\BMW(m,n)$ be the set of all BMW groups of degree $(m,n)$ up to conjugacy in $\Aut(T_m)\times \Aut(T_n)$. 
Let $\comm$ be the equivalence relation of abstract commensurability, i.e., the groups $\Gamma$ and $\Lambda$ satisfy $\Gamma \comm \Lambda$ if they have isomorphic finite index subgroups.
In analogy to counting results for hyperbolic manifolds (see Remark \ref{rem: counting hyperbolic manifolds}), Caprace \cite[Problem 4.26]{Caprace-survey} asks for an estimate on the number of abstract commensurability classes of BMW groups of degree $(m,n)$ as $m,n\to \infty$. 
Addressing this question, we give the following result:

\begin{theoremA}\label{main thm: counting}
There exist $0<\alpha<\beta$ such that, for all sufficiently large $m$ and~$n$,
\[ (mn)^{\alpha mn} \le |\BMW_{\text{vs}}(m,n)/\comm|\le |\BMW(m,n)/\comm| \le (mn)^{\beta mn}. \]
where $\BMW_{\text{vs}}(m,n)$ is the collection of BMW groups, up to conjugacy, of degree $(m,n)$ that contain an index 4 simple subgroup.
\end{theoremA}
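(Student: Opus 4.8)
The plan is to treat the three inequalities separately; the middle one is immediate since $\BMW_{\text{vs}}(m,n)\subseteq \BMW(m,n)$ and abstract commensurability is coarser than conjugacy (each $\comm$-class is a union of conjugacy classes), so I focus on the two outer bounds.

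\emph{Upper bound.} First I would record the standard combinatorial encoding of a BMW group: since $\Gamma$ acts simply transitively on vertices, the quotient $\Gamma\backslash(T_m\times T_n)$ is a single-vertex square complex whose link is the complete bipartite graph $K_{m,n}$, i.e.\ a complete $VH$-complex with $m$ horizontal and $n$ vertical directed edges. Conjugate groups give isomorphic complexes, and the universal cover recovers $\Gamma=\pi_1$ up to conjugacy in $\Aut(T_m)\times\Aut(T_n)$, so $\BMW(m,n)$ injects into the set of such complexes. Such a complex is determined by the ``turn'' bijection of the $mn$ corners of the link, hence by a permutation of an $mn$-element set subject to the square symmetries; there are at most $(mn)!\le (mn)^{mn}$ of these. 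Taking any $\beta>1$ then yields $|\BMW(m,n)/\comm|\le|\BMW(m,n)|\le (mn)^{\beta mn}$ for large $m,n$.

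\emph{Lower bound: producing many groups.} The harder direction is to exhibit $(mn)^{\alpha mn}$ \emph{distinct commensurability classes} of virtually simple groups. First I would build a family $\calF$ of $(mn)^{\Omega(mn)}$ pairwise non-conjugate BMW groups whose horizontal and vertical local actions are prescribed to be highly transitive (e.g.\ containing $\Alt(m)$ and $\Alt(n)$, or $2$-transitive with non-discrete closure). Concretely I would fix a rigid ``scaffold'' of squares forcing the local actions and leave a large block of corners free to be matched arbitrarily, producing $(mn)^{\Omega(mn)}$ complexes distinguished by a combinatorial invariant. For each such $\Gamma$ the Burger--Mozes simplicity criterion applies: the closures of the projections meet the required transitivity and non-discreteness hypotheses, so the kernel $\Gamma^{+}$ of the type homomorphism $\Gamma\to\bbZ/2\times\bbZ/2$ (recording the bipartite parity in each factor) is simple of index $4$, placing every member of $\calF$ in $\BMW_{\text{vs}}(m,n)$.

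\emph{Lower bound: separating commensurability classes.} This is the step I expect to be the main obstacle, since a priori a single commensurability class could swallow the entire family. The key is a rigidity statement: for irreducible virtually simple lattices in $T_m\times T_n$, abstract commensurability is \emph{geometric}. Indeed any isomorphism between finite-index subgroups $\Gamma'\le\Gamma$ and $\Lambda'\le\Lambda$ is a quasi-isometry of $T_m\times T_n$; by product rigidity for $\mathrm{CAT}(0)$ products of trees it is induced by an isometry $g\in(\Aut(T_m)\times\Aut(T_n))\rtimes\bbZ/2$ (the $\bbZ/2$ swapping factors when $m=n$), and just-infiniteness forbids any further degeneration. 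Hence $\Gamma'$ and $g\Lambda'g\ii$ are commensurable inside $\hat G:=(\Aut(T_m)\times\Aut(T_n))\rtimes\bbZ/2$, so abstract commensurability refines to commensurability in $\hat G$. It then remains to bound by some $(mn)^{\varepsilon mn}$ with $\varepsilon<\alpha$ the number of members of $\calF$ lying in one $\hat G$-commensurability class; equivalently, to show that the invariant $\Comm_{\hat G}(\Gamma)$ (a genuine commensurability invariant) together with the vertex-regularity constraint pins $\Gamma$ down to within $(mn)^{\varepsilon mn}$ possibilities. Dividing the family size by this bound gives at least $(mn)^{\alpha mn}$ commensurability classes for a suitable $0<\alpha<\beta$. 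The delicate point throughout is arranging the scaffold so that the invariant distinguishing members of $\calF$ genuinely survives passage to finite-index subgroups and ambient isometries.
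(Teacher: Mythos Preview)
Your upper bound is fine and matches the paper's approach.

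The serious gap is in the lower bound, specifically in the sentence ``the Burger--Mozes simplicity criterion applies \ldots\ so the kernel $\Gamma^{+}$ \ldots\ is simple of index $4$''. No such criterion exists. What Burger--Mozes prove (Theorem~\ref{thm: BM just infinite} here) is that if $\Gamma$ is irreducible with local actions containing $\Alt(m)$ and $\Alt(n)$, then $\Gamma$ is \emph{hereditarily just-infinite}. That is strictly weaker than virtually simple: a residually finite hereditarily just-infinite group has trivial finite residual and is \emph{not} virtually simple. Simplicity of $\Gamma^{+}$ follows only after you also establish that $\Gamma$ is not residually finite (Lemma~\ref{lem: simple finite residual}), and nothing in your scaffold construction forces this. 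The paper's key idea here is to build the scaffold so that every $\Gamma$ in the family contains Radu's explicit non-residually-finite BMW group $\Delta$ of degree $(4,5)$ as a subgroup; this simultaneously forces irreducibility, non-residual finiteness, and (with some extra work, Lemma~\ref{lem:finite_residual}) the equality $\FR{\Gamma}=\Gamma^{+}$. Without an ingredient of this kind your family could consist entirely of residually finite groups and land outside $\BMW_{\text{vs}}(m,n)$.

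Your commensurability separation step is also too loose. Invoking quasi-isometric product rigidity for trees is not enough to promote an abstract isomorphism of finite-index subgroups to a conjugation; the correct tool is the Burger--Mozes--Zimmer rigidity theorem (Theorem~\ref{thm:bmzrigiidity}), which needs primitive local actions as a hypothesis. More importantly, you never explain why a single commensurability class contains only $(mn)^{\varepsilon mn}$ members. The paper's argument uses simplicity of $\Gamma^{+}$ in an essential way: if $\Lambda$ is abstractly commensurable to $\Gamma$, then $\Lambda$ contains a finite-index subgroup isomorphic to the simple group $\Gamma^{+}$; BMZ rigidity forces this subgroup to be exactly $\Lambda^{+}$ and conjugate to $\Gamma^{+}$; one then counts at most $2(m!n!)^{2}$ labelings of the common quotient complex (Proposition~\ref{prop: upper bound on commensurable BMWs}). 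This gives a bound of order $(mn)^{O(m+n)}$ per class, far smaller than $(mn)^{\varepsilon mn}$, but it hinges on $\Gamma^{+}$ being simple---which brings you back to the first gap.
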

All BMW groups contain an index 4 normal subgroup, so the index in the above theorem is as small as possible.

\begin{remark}\label{rem: counting hyperbolic manifolds}
    Compare the above result with the bounds obtained by \cite{burger2002counting,gelander2014counting}: there exist $0<\alpha'<\beta'$ such that the number of commensurability classes of hyperbolic manifolds of volume at most $v$ is bounded between $v^{\alpha' v}$ and $v^{\beta' v}$.
\end{remark}

\subsection*{A random model for irreducible BMW groups.} 
The random model we define is based on a combinatorial description of BMW groups (more precisely, of involutive BMW groups) given in \S\ref{sec:structure_sets}. We postpone the definition of the model to \S\ref{sec: random model}, and only highlight its main properties in Theorem \ref{main thm: random model} below. This model does not capture all possible BMW groups. It was chosen predominantly for its relative ease of computations on the one hand, and its naturality on the other.

A BMW group is \emph{irreducible} if it does not  contain a subgroup of finite index that is isomorphic to the direct product of two free groups. 
A group is \emph{just-infinite} if it is infinite and has only finite proper quotients. It is \emph{hereditarily just-infinite} if all its finite-index subgroups are just-infinite.

\begin{theoremA}\label{main thm: random model}
A random BMW involution group of degree $(m,n)$, with $n>m^5$,
is  hereditarily just-infinite and, in particular, is irreducible 
with probability at least $1-\frac{C}{m}$, 
where $C$ is a constant that is independent of $m$ and $n$.
\end{theoremA}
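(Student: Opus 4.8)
The plan is to reduce hereditary just-infiniteness to two local conditions on the random group $\Gamma$ --- irreducibility together with high transitivity of both local permutation actions --- and then verify these hold with probability $1-C/m$ using the combinatorial model of \S\ref{sec: random model} and classical estimates on random generation of symmetric groups. Recall from \S\ref{sec:structure_sets} that $\Gamma$ is encoded by its two local actions $H\le S_m$ and $V\le S_n$, where $H$ is generated by the $n$ permutations induced by the vertical generators and $V$ by the $m$ permutations induced by the horizontal generators. The key structural input is the Burger--Mozes theory \cite{burgermozes2000lattices} (see also \cite{Caprace-survey}): if $\Gamma$ is irreducible and both $H$ and $V$ are $2$-transitive, then the canonical index-$4$ normal subgroup $\Lambda\trianglelefteq\Gamma$ is infinite and simple, and, being an irreducible lattice, has trivial centralizer $C_\Gamma(\Lambda)=1$.

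Granting this, hereditary just-infiniteness is a soft consequence. An infinite simple group has no proper finite-index subgroup (such a subgroup would give a nontrivial, hence faithful, finite permutation representation), so $\Lambda$ is contained in every finite-index subgroup $\Gamma'\le\Gamma$. For such $\Gamma'$ we have $\Lambda\trianglelefteq\Gamma'$, and any nontrivial normal $N\trianglelefteq\Gamma'$ meets $\Lambda$ in a normal subgroup of $\Lambda$; if $N\cap\Lambda=1$ then $[N,\Lambda]=1$ forces $N\le C_\Gamma(\Lambda)=1$, a contradiction, so $N\supseteq\Lambda$ has finite index. Thus every finite-index subgroup of $\Gamma$ is just-infinite, i.e.\ $\Gamma$ is hereditarily just-infinite; irreducibility is then automatic, since any finite-index subgroup isomorphic to a product of two free groups would itself fail to be just-infinite.

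It therefore remains to show that, with probability at least $1-C/m$, the random group is irreducible with $H$ and $V$ both $2$-transitive; I would in fact establish the stronger statement that $H\supseteq A_m$ and $V\supseteq A_n$, which implies $2$-transitivity, non-regularity, and hence non-discreteness and quasi-primitivity of both projection closures (giving irreducibility). For the small factor, $H\le S_m$ is generated by $n>m^5$ permutations, so by a Dixon-type estimate it contains $A_m$ except with probability super-polynomially small in $n$, negligible against $1/m$. The delicate factor is $V\le S_n$, generated by only $m$ permutations; here the main obstacle is that the model's completeness constraint and the involutive structure make these $m$ permutations neither independent nor uniform, so the standard random-generation theorems do not apply off the shelf.

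The heart of the argument is thus a tailored estimate showing that these $m$ structured permutations generate a group containing $A_n$. I would bound the failure probability by summing, over the possible obstructions to primitivity --- a common fixed point, or more generally a common invariant set or block system of small size --- the probability that all $m$ generators respect it; each such term carries a factor polynomial in $m$ against a power of $1/n$, and the dominant contribution is of order $m^4/n$. The hypothesis $n>m^5$ is precisely what converts this into a bound of order $1/m$, which is the source of the stated rate; ruling out proper primitive subgroups lying between $A_n$ and a small obstruction is then handled by the classical Bovey--Dixon bounds and contributes a lower-order error. A union bound over the two factors yields the probability $1-C/m$.
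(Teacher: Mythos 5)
Your top-level skeleton (reduce hereditary just-infiniteness to irreducibility plus large local actions, then verify those probabilistically) matches the paper, but three of the individual steps are wrong or unsupported. First, the structural input you attribute to Burger--Mozes is false: irreducibility plus $2$-transitive local actions does not make the index-$4$ subgroup $\Gamma^+$ infinite and simple. What Burger--Mozes give (Theorem \ref{thm: BM just infinite}) is hereditary just-infiniteness when $\Gamma$ is irreducible and the local actions contain $\Alt(m)$ and $\Alt(n)$; simplicity of $\Gamma^+$ additionally requires non-residual finiteness, which for random BMW groups is precisely the paper's open conjecture. You should quote the hereditary just-infiniteness statement directly, which also makes your ``soft consequence'' paragraph unnecessary. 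Second, the small factor: in this model the randomness lives entirely in the $m$ independent uniform elements $\alpha_1,\dots,\alpha_m\in\FPF_n$, and the $n$ induced permutations $\beta_k\in\Sym(m)$ are determined by them and are very far from uniform or independent --- generically $\beta_k$ is the identity unless two of the $\alpha_i$ share the orbit $\{k,\alpha_i(k)\}$, in which case it is a single transposition. A Dixon-type estimate is inapplicable, and the true failure probability is not super-polynomially small in $n$: the paper's argument (Proposition \ref{prop:A-side_properties} together with Lemmas \ref{lem:overlapping_matches} and \ref{lem:many_matches}) produces transpositions from shared orbits and carries an error term $2m^2(8/9)^m$ depending only on $m$. (Conversely, your worry that the $m$ generators of $V\le\Sym(n)$ are ``neither independent nor uniform'' is misplaced --- they are independent and uniform on $\FPF_n$ --- and the Liebeck--Shalev-style union bound you sketch there is essentially what the paper does.)

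Third, and most seriously, irreducibility does not follow from both local actions containing the alternating group: a discrete projection can have full local action (amalgams of finite symmetric groups act on $T_m$ with vertex stabilizer $\Sym(m)$), so ``non-regularity, hence non-discreteness'' is a non sequitur. The paper must work for this step: it either invokes Caprace's theorem, which yields the implication only when $n$ avoids eight exceptional values near $\frac{m!}{2}$, $m!$, $\frac{m!(m-1)!}{4}$, etc., or, to cover those residual cases, runs the Trofimov--Weiss non-discreteness criterion on the auxiliary graph $\calG_{\ualpha}$, showing that with high probability some vertex has a radius-$6$ ball containing only white edges while the graph also contains a black edge, whence $\Gamma_v^{[6]}\neq\Gamma_v^{[7]}$ and the projection to $\Aut(T_m)$ is non-discrete. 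This analysis is the technical heart of \S\ref{sec:irreducibility} and is entirely absent from your outline, so as written the proposal does not establish irreducibility and hence does not reach the conclusion of the theorem.
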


\begin{remark} In fact, when $n > m^5$ we have that:
\begin{itemize}
    \item 
    An arbitrary
    BMW group of degree $(m,n)$ is irreducible if and only if it has non-discrete projections to both its factors. 
    Something stronger happens in the random model: asymptotically almost surely, the projection to $\Aut(T_m)$ (resp. to $\Aut(T_n)$) of a random BMW group in the model contains the universal groups $U(A_m)$ (resp. $U(A_n)$).
    \item By the previous remark and a rigidity theorem for BMW groups \cite[Theorem 1.4.1]{bmz09lattices} (see also Theorem~\ref{thm:bmzrigiidity}), asymptotically almost surely, two random BMW groups are not isomorphic. 
\end{itemize}
\end{remark}
We give the following conjecture regarding this random model:
\begin{conjecture}
In the above range of $m$ and $n$, 
a random BMW group 
is asymptotically almost surely not residually finite and  consequently is virtually simple by Theorem~\ref{main thm: random model}.
\end{conjecture}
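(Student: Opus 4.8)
The plan is first to isolate the genuine content, since the displayed statement is a conjecture and the clause ``consequently is virtually simple by Theorem~\ref{main thm: random model}'' already encodes an implication. By Theorem~\ref{main thm: random model}, with probability at least $1-\frac{C}{m}$ the random group $\Gamma$ is hereditarily just-infinite, and a standard fact in this setting (cf.\ \cite{burgermozes2000lattices}) is that a finitely generated hereditarily just-infinite group that fails to be residually finite is virtually simple: its finite residual $R=\bigcap_{[\Gamma:H]<\infty}H$ is then a nontrivial characteristic subgroup, hence of finite index by just-infiniteness, and minimality of $R$ together with heredity lets one extract a finite-index simple subgroup. Thus, on the event of Theorem~\ref{main thm: random model}, the whole conjecture reduces to the single assertion that a random $\Gamma$ is \emph{asymptotically almost surely not residually finite}, and I would devote the entire argument to this target.

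The cleanest available mechanism runs through heredity in the opposite direction: residual finiteness passes to subgroups, so it suffices to exhibit, asymptotically almost surely, one finitely generated \emph{non}-residually-finite subgroup $H\le\Gamma$. I would produce such an $H$ combinatorially. Known non-residually-finite groups of this flavour, beginning with Wise's example \cite{wise2007Complete}, are fundamental groups of explicit finite complete $\mathrm{VH}$ square complexes. Fixing one such non-RF complex $X_0$, I would search for a locally convex --- hence, in the $\mathrm{CAT}(0)$ square complex $T_m\times T_n$, $\pi_1$-injective --- immersed copy of $X_0$ in the one-vertex square complex whose fundamental group is $\Gamma$; the image of $\pi_1(X_0)$ would then be the desired non-RF subgroup. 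The occurrence of such an immersion is governed by finitely many local gluing patterns in the structure set of \S\ref{sec:structure_sets}, so I would try to show, using the same first- and second-moment estimates that yield the ``contains $U(A_m)$ and $U(A_n)$'' statement of the remark following Theorem~\ref{main thm: random model}, that each required pattern is realized a.a.s.\ once $n>m^5$ provides enough room.

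A more robust alternative would bypass subgroups and instead seek a deterministic sufficient condition for non-residual-finiteness phrased purely in terms of the local data $A_m,A_n$ and the gluing, then verify that condition generically. Here I would leverage the rigidity already in hand: a.a.s.\ both projection closures contain the simple universal groups $U(A_m)^+$ and $U(A_n)^+$, so $\Gamma$ is an irreducible cocompact lattice with dense projections whose commensurator, and hence whose lattice of finite quotients, is tightly constrained by the ambient product via \cite[Theorem 1.4.1]{bmz09lattices} (Theorem~\ref{thm:bmzrigiidity}). The hope is to upgrade this to the statement that a residually finite such $\Gamma$ would have to support a congruence-type tower of finite quotients incompatible with the random local action, producing a contradiction.

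The main obstacle is that non-residual-finiteness is a profinite, global invariant for which there is no known local-to-global criterion in this non-linear, non-arithmetic setting: every existing non-RF lattice in a product of trees is produced either by bespoke combinatorial engineering or by arithmetic input (a controlled failure of the congruence subgroup property), and neither is manifestly available at random. Concretely, in the first route the difficulty is to guarantee that some non-RF complex immerses \emph{$\pi_1$-injectively} subject to the involution constraints and the highly asymmetric degrees $(m,n)$ with $n>m^5$, while in the second route the difficulty is to produce \emph{any} sufficient local criterion for non-RF that is also satisfied generically. Finding a certificate for non-residual-finiteness that is simultaneously sufficient and generic is precisely the gap that keeps this statement a conjecture rather than a theorem.
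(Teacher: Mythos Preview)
The statement is a \emph{conjecture}; the paper does not prove it and offers no argument beyond the clause ``consequently is virtually simple by Theorem~\ref{main thm: random model}.'' You correctly recognise this, and your justification of that clause matches the paper's own Lemma~\ref{lem: simple finite residual} (hereditarily just-infinite plus not residually finite implies the finite residual is a finite-index simple subgroup). So on the only part that \emph{is} proved in the paper, your account is essentially the same.

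Your two speculative strategies for the open part --- embedding a known non-residually-finite VH-complex as a locally convex subcomplex, or finding a generic local certificate for non-residual-finiteness --- are reasonable lines of attack, and you are honest that neither closes. One small correction: in the first route you speak of immersing $X_0$ into ``the one-vertex square complex whose fundamental group is $\Gamma$,'' but in the random model of \S\ref{sec:random_model} the group $\Gamma$ is involutive and the natural quotient is the four-vertex $(m,n)$-complex $\Gamma^+\backslash(T_m\times T_n)$ of Definition~\ref{def:complex}, not a one-vertex complex; any embedding argument would have to be phrased there (or in a suitable cover). This does not change your overall conclusion: the gap you identify --- the absence of any local-to-global criterion for non-residual-finiteness that is both sufficient and generically satisfied --- is exactly why the paper leaves this as a conjecture.
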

More generally, we conjecture the following:
\begin{conjecture}\label{conj:vsimple_most}
As $m,n\to\infty$, the proportion of virtually simple  BMW groups of degree $(m,n)$,
up to conjugacy,
tends to 1.
\end{conjecture}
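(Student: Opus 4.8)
The plan is to factor virtual simplicity through two independent asymptotic properties of the uniform (counting) measure on conjugacy classes of $\BMW(m,n)$, and to attack each one separately. The structural reduction is the implication, already invoked after the preceding conjecture, that a hereditarily just-infinite BMW group (in the sense defined above) that is \emph{not} residually finite is virtually simple. To see this, let $R=\bigcap\{N : N\normalin\Gamma,\ [\Gamma:N]<\infty\}$ be the finite residual. If $\Gamma$ is not residually finite then $R\neq 1$, and since $R$ is characteristic and $\Gamma$ is just-infinite, $R$ has finite index. As $\Gamma$ is hereditarily just-infinite, $R$ is itself just-infinite, so every nontrivial $N\normalin R$ has $[R:N]<\infty$; but then $\Core_\Gamma(N)$ is an intersection of the finitely many $\Gamma$-conjugates of $N$, hence a finite-index normal subgroup of $\Gamma$, so $R\subseteq \Core_\Gamma(N)\subseteq N\subseteq R$ and $N=R$. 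Thus $R$ is an infinite simple group of finite index, and $\Gamma$ is virtually simple. Consequently it suffices to prove that, as $m,n\to\infty$, a proportion tending to $1$ of the conjugacy classes in $\BMW(m,n)$ are \emph{simultaneously} hereditarily just-infinite and non-residually-finite.

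For hereditary just-infiniteness I would work with the combinatorial parametrization of $\BMW(m,n)$ from \S\ref{sec:structure_sets}, under which the counting measure on conjugacy classes corresponds (up to automorphism-orbit weights, which are harmless for $a.a.s.$ statements) to the uniform measure on the defining data. The goal is to upgrade Theorem~\ref{main thm: random model} and its first accompanying bullet from the random model to this uniform measure: show that $a.a.s.$ the closures of both projections are non-discrete and contain the universal groups $U(A_m)$ and $U(A_n)$. Once both local actions are this large, $\Gamma$ is irreducible and the Burger--Mozes normal subgroup theorem \cite{burgermozes2000lattices} forces every finite-index subgroup to be just-infinite, i.e.\ $\Gamma$ is hereditarily just-infinite. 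The difficulty here, relative to Theorem~\ref{main thm: random model}, is twofold: one must remove the hypothesis $n>m^5$ and cover the balanced regime $m\asymp n$, where the squares are far more constrained and the first/second-moment estimates that produce non-discrete projections in the skewed regime must be replaced by sharper concentration bounds; and one must verify that largeness of the local action remains typical for the uniform measure on all conjugacy classes, not merely for the special measure of the random model. I regard this part as technically demanding but morally robust, since large local action is a generic, ``positive-probability-closed-up'' feature of the data.

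The main obstacle is non-residual-finiteness, which is precisely the content of the preceding (still open) conjecture, and here I only see a conditional route. Showing that a random object fails to be residually finite is genuinely hard, as it requires producing a nontrivial element contained in every finite-index subgroup. The approach I would pursue is the Burger--Mozes one: assuming $\Gamma$ is residually finite, the $a.a.s.$ large non-discrete local actions severely restrict the possible finite quotients, and one seeks a contradiction from the rigidity of the finite quotient theory of $U(A_m)$ and $U(A_n)$, using the classification of finite simple groups to control the admissible composition factors, together with the rigidity theorem \cite{bmz09lattices}. The probabilistic task is then to exhibit, with probability tending to $1$, a bounded configuration of squares in the structure set that forces the relevant obstruction to residual finiteness. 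I expect this step to be the crux of the whole conjecture: the hereditary just-infiniteness is a soft consequence of large local action, whereas ruling out residual finiteness $a.a.s.$ demands a genuinely new probabilistic mechanism, and even its restriction to the random model remains unproven. It is also worth noting that Theorem~\ref{main thm: counting} is consistent with, but does not imply, the conjecture: it shows $\BMW_{\text{vs}}(m,n)$ has the correct logarithmic order, yet the gap between the exponents $\alpha$ and $\beta$ leaves room for a negligible proportion, so the finer $a.a.s.$ analysis above is unavoidable.
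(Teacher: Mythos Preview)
This statement is a \emph{conjecture} in the paper, not a theorem: the paper does not prove it, and explicitly notes only that positive evidence for small $m,n$ has been given by Rattaggi and Radu. So there is no ``paper's own proof'' to compare against.

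Your proposal is, accordingly, not a proof but a strategy sketch, and you are candid about this. The structural reduction you give (hereditarily just-infinite and not residually finite $\Rightarrow$ virtually simple) is exactly Lemma~\ref{lem: simple finite residual} of the paper, and your argument for it is correct. You are also right that the crux is non-residual-finiteness: the paper isolates this as a separate open conjecture (the one immediately preceding Conjecture~\ref{conj:vsimple_most}), and you correctly note that even its restriction to the random model is unproven. Your remark that Theorem~\ref{main thm: counting} is consistent with but does not imply the conjecture is accurate. One small correction: the hereditarily just-infinite part is also not established in the generality you need. Theorem~\ref{main thm: random model} gives it only for the specific random model with $n>m^5$, not for the uniform counting measure on all of $\BMW(m,n)$; you acknowledge this but perhaps understate how far the existing arguments are from the balanced regime $m\asymp n$, where the combinatorics of structure sets is quite different from the involutive model built from $(\FPF_n)^m$. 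In short, your outline correctly identifies both obstacles, but neither is currently surmounted, in the paper or elsewhere.
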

Positive evidence for Conjecture \ref{conj:vsimple_most} has been given by Rattagi \cite{rattaggi2004computations} and Radu \cite{radu20newlattices} 
for small values of $m$ and $n$.

\subsection*{Outline} In \S\ref{sec:involutive_BMWs} we discuss involutive BMW groups and a combinatorial description of them. In \S\ref{sec:bounds}, we bound the number of BMWs from above, 
and in \S\ref{sec:counting_commensurability}, we prove the more difficult lower bound, giving Theorem \ref{main thm: counting}.
In \S\ref{sec:random_model} we present our random model for BMW groups. Next, in \S\ref{sec:atree_local} and \S\ref{sec:btree_local}, we show that the local actions are alternating or symmetric with high probability.
Finally, in \S\ref{sec:irreducibility} we prove Theorem~\ref{main thm: random model}.
We note that sections \S\ref{sec:bounds} and \S\ref{sec:counting_commensurability} can be read independently of \S\ref{sec:random_model}, \S\ref{sec:atree_local}, \S\ref{sec:btree_local} and \S\ref{sec:irreducibility} (and vice versa).

\subsubsection*{Acknowledgements.} We thank Pierre-Emmanuel Caprace for his comments on the manuscript and for his suggestion to use the main result of his paper \cite{caprace2018radius} to improve the bound in Theorem \ref{main thm: random model}.

\section{Involutive BMW Groups}\label{sec:involutive_BMWs}

 An \emph{involutive} BMW group $\Gamma$ of degree $(m,n)$ is a BMW group such that for every edge $e$ of $T_m\times T_n$, there is some $g\in \Gamma$ that interchanges the endpoints of $e$. Since $\Gamma$ acts simply transitively on vertices, such an element $g$ must be an involution.
 We let $\iBMW(m,n)$
 denote the set of all involutive BMW groups of degree $(m,n)$, up to conjugacy in $\Aut(T_m)\times \Aut(T_n)$. 
 A BMW group of degree $(m,n)$ is irreducible if and only if  the projection of $\Gamma$ to either $\Aut(T_n)$ or $\Aut(T_m)$ is not discrete  \cite[Proposition 1.2]{burgermozes2000lattices}.
 
Any tree $T$ is bipartite; let $\Aut^+(T)$ be the index 2 subgroup of $\Aut(T)$ preserving the bi-partition of $T$.
Given a BMW group $\Gamma$ of degree $(m,n)$, denote by  $\Gamma^+ = \Gamma \cap (\Aut^+(T_m)\times \Aut^+(T_n))$ the index 4 subgroup of $\Gamma$ preserving the bipartitions of $T_m$ and $T_n$. This subgroup is always torsion-free \cite[Lemma 3.1]{radu20newlattices}.

\subsection{Structure sets}\label{sec:structure_sets}
In this section we describe structure sets which encode presentations for BMW groups.
For the rest of this article, we  fix countable indexing sets $a_1, a_2, \dots$ and $b_1, b_2, \dots$, and for each $k \in \bbN$, we set $A_k :=\{a_1,\dots, a_k\}$ and $B_k :=\{b_1,\dots, b_k\}$.

\begin{definition}[Structure set]\label{def: structure set}
An \emph{$(m,n)$-structure set} $S$ is a collection of subsets of $A_m\sqcup B_n$ such that:
\begin{enumerate}
    \item each element of $S$ is of the form $\{a_i,b_k,a_j,b_l\}$ where $a_i,a_j\in A_m$ and $b_k,b_l\in B_n$, and
    \item for every $a\in A_m$ and $b\in B_n$, $\{a,b\}$ is a subset of exactly one set in $S$.
\end{enumerate} 
Let $\calS_{m,n}$ denote the set of all $(m,n)$-structure sets.

For a structure set $S$, denote by $R_S$ the set of \emph{words in $A_m\sqcup B_n$} defined as
\[R_S = \{ a_i b_k a_j b_l \;|\;\{a_i,b_k,a_j,b_l\} \in S\} .\]
\end{definition}

\begin{remark}\label{rmk: structure sets}
    In the definition of a structure set, the elements $a_i,a_j$ (and similarly $b_k,b_l$) of a set $\{a_i,b_k,a_j,b_l\}\in S$ are not assumed to be distinct, so some $\{a_i,b_k,a_j,b_l\}\in S$ may have fewer than 4 elements. We often still write repeating elements in these subsets -- e.g. $\{a_i,b_k,a_i,b_l\}$ instead of $\{a_i,b_k,b_l\}$. In this example, the word $a_i b_k a_i b_l$ is one of the words in $R_S$ corresponding to $\{a_i, b_k, a_i, b_l\} = \{a_i, b_k, b_l\}$.
\end{remark}
\begin{remark} \label{rmk: bipartite graphs}
 A useful point of view on structure sets is given by partitions of the complete bi-partite graph:
    Let $K_{m,n}$ be the complete bi-partite graph on $A_m \sqcup B_n$. Given  $\{a_i, b_k, a_j, b_l\}\in S$, one can assign to it the closed (possibly degenerate) path of length 4 in $K_{m,n}$ connecting the vertices $a_i,a_j$ to the vertices $b_k,b_l$. In this way, one can think of an $(m,n)$-structure set as a partition of the edges of the complete bi-partite graph on the vertices $A_m \sqcup B_n$ into closed paths of length 4 such that each edge belongs to exactly one such path.
\end{remark}

A (combinatorial) \emph{square complex} is a 2-complex in which 2-cells (squares) are attached along combinatorial paths of length four. A \emph{VH-complex} is a square complex in which the set of 1-cells (edges) is partitioned into vertical and horizontal edges such that the attaching map of each square alternates between them. 
We regard the product $T_m\times T_n$ of trees as a VH-complex where an edge is  horizontal if it lies in $T_m\times \{v\}$ for some $v\in T_n$, and vertical  if it lies in $\{v\}\times {T_n}$ for some $v\in T_m$.

\begin{definition}[Marking] \label{def:marking}
    A \emph{marking} $\mathcal M$  on  $T_m\times T_n$ is a choice of a base vertex $o \in T_m\times T_n$ and an identification of the horizontal (resp. vertical) edges incident to $o$ with $A_m$ (resp. $B_n$).   
    An element $g \in \Aut(T_m) \times \Aut(T_n)$ is said to \emph{fix $\mathcal M$} if $g$ fixes $o$ and fixes all edges adjacent to $o$. 
\end{definition}

Fix a marking $\mathcal M$ on $T_m \times T_n$ with base vertex $o$.
Let $\BMW_{\mathcal M}(m,n)$ be the set of all involutive BMW groups of degree $(m,n)$, up to an automorphism fixing $\mathcal M$.
In other words, two BMW groups $\Gamma$ and $\Gamma'$ of degree $(m,n)$ are equal in  $\BMW_{\mathcal M}(m,n)$ if and only if $\Gamma = g \Gamma' g^{-1}$ where $g$ is an element of $\Aut(T_m) \times \Aut(T_n)$ that fixes $\mathcal M$.
Let $\calS_{m,n}$ be the set of all $(m,n)$-structure sets. 
We now describe how to obtain a bijection:
\[\Phi: \BMW_{\mathcal M}(m,n) \to \calS_{m,n}\]

Let $\Gamma$ be an involutive BMW group of degree $(m,n)$. 
As $\Gamma$ is involutive, each edge of $T_m \times T_n$ is stabilized by a unique element of $\Gamma$.
By a slight abuse of notation, we let $a_i$ (resp. $b_i$) denote the element of $\Gamma$ that stabilizes the edge that is adjacent to $o$ with label $a_i$ (resp. $b_i$).
As $\Gamma$ acts freely and transitively on the vertices of $T_m \times T_n$, its action induces a well-defined, $\Gamma$-invariant labeling of the edges of $T_m \times T_n$ which we generally call the \emph{$\Gamma$-induced labeling}.
Moreover, it is readily checked that the $1$--skeleton of $T_m \times T_n$ with this labeling is the Cayley graph for $\Gamma$ with generating set $\{a_1, \dots, a_m, b_1, \dots, b_n \}$ (where bigons in this Cayley graph are collapsed to edges).

We now describe how to form an $(m,n)$-structure set $S$ associated to $\Gamma$. 
Let $S$ be the collection of subsets $\{a_i,b_k,a_j,b_l\}$ such that there exists a square in $T_m \times T_n$ whose edges are labeled $a_i,b_k,a_j,b_l$ with respect to the $\Gamma$-induced labeling.
Note that since $\Gamma$ acts simply transitively on the vertices of $T_m\times T_n$ and preserves the $\Gamma$-induced labeling, it suffices to only consider the squares 
containing $o$.

To show that $S$ is a structure set, let $a \in A_m$ and $b \in B_n$. 
There exists a unique square $s$ which contains both edges incident to $o$ labeled by $a$ and $b$. 
Since $\Gamma$ acts simply transitively on vertices, any other square which contains two edges labeled by $a$ and $b$ is in the orbit  of $s$ and consequently its edges 
have the same labels as $s$. Thus, there is a unique $\{a_i,b_k,a_j,b_l\}\in S$ containing $a$ and $b$.
So, $S$ is indeed an $(m,n)$-structure set. 
We say that $S$ is the structure set \emph{associated with $\Gamma$}, and we define $\Phi([\Gamma]) = S$, where $[\Gamma]$ is the equivalence class in $\BMW_{\mathcal M}(m,n)$ containing~$\Gamma$. 

Additionally, we conclude that $\Gamma$ has the presentation
\[\langle A_m\sqcup B_n\mid \{ a^2 \mid a\in A_m\} \cup \{b^2  \mid b\in B_n\} \cup R_S \rangle. \label{eqn:presentation} \]
This follows since 
we can take the $1$--skeleton of $T_m \times T_n$, label it with the $\Gamma$-induced labeling (i.e., form the Cayley graph for $\Gamma$) and attach $2$--cells corresponding to the relations $R_S$. The resulting complex can also be obtained from the Cayley complex for $\Gamma$ by collapsing each bigon corresponding to the relations $a^2$ and $b^2$ to an edge. In fact, it is just $T_m \times T_n$ with the $\Gamma$-induced edge labeling.

We now need to check that $\Phi$ is well defined. Suppose that $\Gamma'$ is an involutive BMW group of degree $(m,n)$ that is conjugate to $\Gamma$ by some $g \in \Aut(T_m) \times \Aut(T_n)$ that fixes $\mathcal M$. 
Then, as $g$ fixes $\mathcal M$, the $\Gamma$-induced labeling and $\Gamma'$-induced labeling of $T_m \times T_n$ agree on all squares that contain $o$. 
It follows by construction that the structure sets associated to $\Gamma$ and $\Gamma'$ are equal. Consequently, $\Phi$ is well-defined.

We now check that $\Phi$ is injective.
Suppose that $\Gamma$ and $\Gamma'$ are involutive BMW groups of degree $(m,n)$ and that $\Phi([\Gamma]) = \Phi([\Gamma'])$. 
Consequently, the $\Gamma$-induced labeling and the $\Gamma'$-induced labeling of $T_m \times T_n$ agree on all squares which contain $o$.
It now readily follows that $\Gamma'$ is conjugate to $\Gamma$ by some element $g \in \Aut(T_m) \times \Aut(T_n)$ that fixes the marking $\mathcal M$.
Thus, $\Gamma$ and $\Gamma'$ are equal in $\BMW_{\mathcal M}(m,n)$.

Finally, we check that $\Phi$ is surjective.
Let $S$ be an $(m,n)$-structure set.
Then the group $\Gamma$ with presentation as in (\ref{eqn:presentation}) is an involutive BMW group (see for instance \cite{Caprace-survey}).
Moreover, $\Phi([\Gamma]) = S$.
We have thus shown:
\begin{proposition} \label{prop:structure_set_bmw_bijection}
    Let $\mathcal M$ be a marking of $T_m \times T_n$.
    There is a bijection 
    \[\Phi: \BMW_{\mathcal M}(m,n) \to \calS_{m,n}.\]
    Moreover, for each $S \in \calS_{m,n}$,
    each representative of $\Phi^{-1}(S)$ has the presentation
    \[\langle A_m\sqcup B_n\mid \{ a^2 \mid a\in A_m\} \cup \{b^2  \mid b\in B_n\} \cup R_S\rangle \]
\end{proposition}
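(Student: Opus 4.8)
The plan is to establish the bijection $\Phi$ by constructing an explicit two-sided inverse, making precise the correspondence already sketched in the text. Most of the work has in fact been laid out; what remains is to organize it into a clean proof of bijectivity together with the presentation claim. I would structure the argument around three independent verifications (well-definedness, injectivity, surjectivity), using the $\Gamma$-induced labeling of $T_m \times T_n$ as the central object that mediates between a group $\Gamma$ and its structure set $S$.

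First I would fix notation for the $\Gamma$-induced labeling: since $\Gamma$ is involutive and acts simply transitively on vertices, each edge has a unique stabilizing involution, and translating the generators $a_i, b_i$ (the stabilizers of the edges at $o$) by group elements produces a $\Gamma$-invariant edge labeling whose underlying graph is the Cayley graph of $\Gamma$ with respect to $A_m \sqcup B_n$. The key observation I would isolate as a lemma-like step is that a marking-fixing element $g \in \Aut(T_m)\times\Aut(T_n)$ induces a bijection of squares at $o$ that respects labels, so that \emph{two BMW groups conjugate by a marking-fixing $g$ have labelings agreeing on all squares through $o$, and conversely}. With this in hand, well-definedness and injectivity of $\Phi$ both reduce to the same fact: the structure set records exactly the label-patterns of squares at $o$, and by simple transitivity these determine the labeling on all of $T_m \times T_n$, hence determine $\Gamma$ up to the marking-fixing conjugation.

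For surjectivity I would take an arbitrary $S \in \calS_{m,n}$, form the group $\Gamma$ with the presentation in $(\ref{eqn:presentation})$, and verify that its Cayley complex (after collapsing the bigons coming from $a^2$ and $b^2$) is isometric to $T_m \times T_n$ with a VH-structure, so that $\Gamma$ acts simply transitively on vertices and is genuinely a BMW group. The condition (2) in the definition of a structure set — that each pair $\{a,b\}$ lies in exactly one element of $S$ — is precisely what guarantees the link of each vertex in the Cayley complex is the complete bipartite graph $K_{m,n}$, which by the standard criterion for products of trees (as in \cite{Caprace-survey}) forces the universal cover to be $T_m \times T_n$. Checking that $\Phi([\Gamma]) = S$ is then immediate from the construction, since the squares at $o$ in this complex are exactly the length-four cycles $a_i b_k a_j b_l$ prescribed by $R_S$. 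The presentation statement is a byproduct of surjectivity, as the group in $(\ref{eqn:presentation})$ is by definition the representative produced.

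I expect the main obstacle to be the surjectivity step, specifically verifying that the presented group $\Gamma$ really acts simply transitively on $T_m \times T_n$ rather than on some other square complex. The delicate point is the link condition: one must confirm that conditions (1) and (2) together force each vertex link to be exactly $K_{m,n}$ (with no multiple edges and no missing edges), using that the squares partition the edge set of the complete bipartite graph as in Remark~\ref{rmk: bipartite graphs}. Once the link is identified as $K_{m,n}$, nonpositive curvature (the links are complete bipartite, hence have no short cycles, so the complex is CAT(0)) combined with the VH-structure yields a universal cover splitting as $T_m \times T_n$, and freeness of the $\Gamma^+$-action \cite[Lemma 3.1]{radu20newlattices} upgrades this to simple transitivity on vertices. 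I would lean on the cited references for the routine parts of this verification rather than reprove the product-of-trees recognition theorem from scratch.
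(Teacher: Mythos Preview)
Your proposal is correct and follows essentially the same route as the paper: define $\Phi$ via the $\Gamma$-induced labeling, check well-definedness and injectivity using that a marking-fixing conjugation preserves the label-patterns on squares through $o$, and for surjectivity build $\Gamma$ from the presentation and invoke the link condition (citing \cite{Caprace-survey}) to identify the Cayley $2$-complex with $T_m\times T_n$. The only unnecessary detour is your appeal to freeness of the $\Gamma^+$-action at the end: simple transitivity of $\Gamma$ on vertices is immediate once the collapsed Cayley complex is identified with $T_m\times T_n$, since $\Gamma$ acts on its own Cayley graph by left multiplication, so the $\Gamma^+$ reference can be dropped.
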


Let $g \in  \Aut(T_m) \times \Aut(T_n)$ be an automorphism.
We describe how $g$ acts on markings.
Let $\mathcal M$ be a marking of $T_m \times T_n$ with base vertex $o$.
Then $g$ induces a new marking $\mathcal M' = g\mathcal M$ whose base vertex is $o' = go$ and such that the label of an edge $e$ adjacent to $o'$ is equal to the label of $g^{-1}e$ under the marking $\mathcal M$.
This action of $g$ also induces a bijection $\Psi_g: \BMW_{\mathcal M}(m,n) \to \BMW_{\mathcal M'}(m,n)$ by sending $[\Gamma] \in \BMW_{\mathcal M}(m,n)$ to $[g\Gamma g^{-1}] \in \BMW_{\mathcal M'}(m,n)$. 

Let $S$ be an $(m,n)$ structure set, and let  $\mu \in \Sym(A_m)$ and $\nu \in \Sym(B_m)$ be permutations.
We can form a new $(m,n)$-structure set $S'$ by applying the permutation $(\mu, \nu) \in \Sym(A_m)\times \Sym(B_n) \le \Sym(A_m \sqcup B_n)$ to the subsets in $S$.
We say that $S'$ is a \emph{relabeling of $P$ induced by $\mu$ and $\nu$}.

Let $\Gamma' = g\Gamma g^{-1}$ for some $g\in \Aut(T_m) \times \Aut(T_n)$. Since $\Gamma$ acts vertex transitively, we may assume, without loss of generality, that $g$ fixes $o$. Thus, $g$ induces  permutations $\mu\in\Sym(A_m)$ and $\nu\in \Sym(B_m)$ on the labels (in the marking $\mathcal M$) of the edges incident to $o$. It readily follows that the structure set of $S'$ of $\Gamma'$ is obtained from the structure set $S$ of $\Gamma$ by relabeling.

Thus, if $\Gamma$ and $\Gamma'$ are conjugate BMW groups, then their associated structure sets are the same up to a relabeling, regardless of a choice of marking.
Conversely, suppose that $S'$ is an $(m,n)$-structure set that is a relabeling of a structure set $S$ induced by $\mu \in \Sym(A_m)$ and $\nu \in \Sym(B_n)$.
Then we can choose a marking $\mathcal M$ on $T_m \times T_n$ and a BMW group $\Gamma$ whose associated structure set is $P$. 
Additionally, we can choose a $g \in \Aut_{\calM}(T_m \times T_n)$ so that the induced action of $g$ on the labels of the horizontal and vertical edges around $o$ is given by $\mu$ and $\nu$ respectively. 
From this, we have that $g \Gamma g^{-1}$ is a BMW group conjugate to $\Gamma$ whose structure set is $S'$.
We have thus shown the following:
\begin{proposition}
    There is a bijection 
    \[\Psi: \BMW(m,n) \to \calS_{m,n}/\text{relabeling}.\]
\end{proposition}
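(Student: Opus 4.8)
The plan is to upgrade the earlier bijection $\Phi\colon \BMW_{\mathcal M}(m,n)\to\calS_{m,n}$ (Proposition~\ref{prop:structure_set_bmw_bijection}) to a statement that is independent of the choice of marking, by quotienting out the residual ambiguity on each side. On the left, $\BMW(m,n)$ differs from $\BMW_{\mathcal M}(m,n)$ in that we allow conjugation by \emph{any} $g\in\Aut(T_m)\times\Aut(T_n)$, not merely those fixing $\mathcal M$. On the right, the corresponding ambiguity is exactly relabeling by $\Sym(A_m)\times\Sym(B_n)$. So I would construct $\Psi$ as the map induced by $\Phi$ after passing to these quotients, and then verify it is well defined and bijective.

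First I would fix a marking $\mathcal M$ with base vertex $o$ and define $\Psi([\Gamma])$, for $[\Gamma]\in\BMW(m,n)$, to be the relabeling class of $\Phi$ applied to a representative. The key point that makes this well defined is the computation already carried out in the excerpt: if $\Gamma'=g\Gamma g^{-1}$ for some $g\in\Aut(T_m)\times\Aut(T_n)$, then since $\Gamma$ is vertex transitive we may assume $g$ fixes $o$, and $g$ then induces permutations $\mu\in\Sym(A_m)$, $\nu\in\Sym(B_n)$ on the edge-labels at $o$; the associated structure sets $S$ of $\Gamma$ and $S'$ of $\Gamma'$ differ precisely by the relabeling induced by $(\mu,\nu)$. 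Hence conjugate BMW groups land in the same relabeling class, and $\Psi$ descends to a well-defined map on conjugacy classes.

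Next I would check surjectivity and injectivity. Surjectivity is immediate: given any structure set $S$, Proposition~\ref{prop:structure_set_bmw_bijection} produces a $\Gamma$ with $\Phi([\Gamma])=S$, whose relabeling class is hit by $\Psi$. For injectivity, suppose $\Gamma$ and $\Gamma'$ have structure sets $S$ and $S'$ (with respect to $\mathcal M$) lying in the same relabeling class, say $S'$ is obtained from $S$ by $(\mu,\nu)\in\Sym(A_m)\times\Sym(B_n)$. Using the last construction recalled in the excerpt, I can choose $g\in\Aut(T_m)\times\Aut(T_n)$ fixing $o$ whose induced action on the labels at $o$ is $(\mu,\nu)$; then $g\Gamma g^{-1}$ has structure set $S'$ and is conjugate to $\Gamma$. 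By injectivity of $\Phi$ (same marking, same structure set $S'$), $g\Gamma g^{-1}$ and $\Gamma'$ are equal in $\BMW_{\mathcal M}(m,n)$, hence conjugate, and therefore $[\Gamma]=[\Gamma']$ in $\BMW(m,n)$. This gives injectivity.

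The main obstacle is not any single hard computation but rather bookkeeping: ensuring that the reduction ``we may assume $g$ fixes $o$'' is legitimate (it is, by vertex transitivity of $\Gamma$, which lets us absorb any translation into $\Gamma$ itself) and that the marking-dependence genuinely cancels when one quotients by relabeling on both sides. Concretely, I must confirm that every relabeling $(\mu,\nu)$ is realized by some marking-moving $g$ and, conversely, that every such $g$ (after reducing to the case $g o=o$) induces a relabeling, so that the two equivalence relations match up exactly under $\Phi$. Since both directions of this correspondence are already spelled out in the paragraphs preceding the statement, the proof is essentially a matter of assembling them into the formal claim that $\Psi$ is the induced bijection.
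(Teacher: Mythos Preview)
Your proposal is correct and follows essentially the same approach as the paper: the proposition is stated immediately after the two paragraphs you cite (conjugate groups yield relabeled structure sets; conversely, relabeled structure sets come from conjugate groups), and the paper simply declares ``We have thus shown the following'' without a separate formal proof. Your write-up is a faithful formalization of that discussion, organizing it into well-definedness, surjectivity, and injectivity exactly as one would expect.
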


\subsection{Local actions}
Let $X$ be a locally finite graph. For every vertex $v\in V(X)$, let $E(v)$ be the set of edges of $X$ incident to $v$. 
If a group $\Gamma$ acts  on  $X$, the \emph{local action} of $\Gamma$ on $X$ at the vertex $v$ is the induced action of $\Stab_\Gamma(v)$ on the set $E(v)$.  
By abuse of terminology, we will also refer to the image of the action $\Stab_\Gamma(v)\to \Sym(E(v))\cong \Sym(n)$ as the local action, where $n=\lvert E(v)\rvert$.
The \emph{local actions of $\Gamma\leq \Aut(T_m)\times \Aut(T_n)$} are the local actions of $\Gamma$ on $T_m$ and $T_n$. More specifically, we call the local action of $\Gamma$ on $T_m$ the \emph{$A$-tree local action},  and the local action of $\Gamma$ on $T_n$ the \emph{$B$-tree local action}.

We show how to read off the local action of an involutive BMW group of degree $(m,n)$ from the corresponding structure set.
First note that since a BMW group of degree $(m,n)$ acts transitively on the vertices of $T_m\times T_n$, its local actions on $T_m$ (resp. $T_n$) at different vertices are conjugate actions. We can thus refer to \emph{the} local action on $T_m$ (resp. on $T_n$) as the conjugacy class of the local action at some vertex of $T_m$ (resp. $T_n$).

Let us focus on the local action on $T_n$.
Let $\calM$ be a marking of $T_m\times T_n$ 
with base vertex $o$.
Let $\pi_A$ and $\pi_B$ be the projections of $T_m\times T_n$ to the first and second factors respectively.
Let $o_B = \pi_B(o) \in T_n$. 
Edges incident to $o_B$ in $T_n$ 
are
%can be
labeled by elements of $B_n$ as follows: the label of an edge $e$ incident to $o_B$ is the label of the unique edge $e'$ of $T_m\times T_n$ incident to $o$ such that $\pi_B(e') = e$.

Let $\Gamma$ be an involutive BMW group of degree $(m,n)$ with structure set $S$. 
The local action of $\Gamma$ at the vertex $o_B$ can be identified with a subgroup of $\Sym(A_m)\simeq \Sym(m)$.
Recall that $\Gamma$ is generated by the elements~$a_1,\ldots,a_m$, $b_1,\ldots,b_n$.
Observe that  $\Stab_{\Gamma}(o_B)=\gen{a_1,\ldots,a_m}$, and thus the local action on $T_n$ is generated by the action of each one of $a_1,\ldots,a_m\in A_m$.
Denote by $\alpha_i \in \Sym(n)\simeq \Sym(B_n)$ the action of $a_i$ on $B_n$. 

\begin{lemma}\label{lem: from structure set to local actions}
Let $1\leq i\leq m$ and $1\le k,l \le n$. Then  $\alpha_i(k)=l$ if and only if $\{a_i,b_k,a_j,b_l\}\in S$ for some $a_j\in A_m$.
\end{lemma}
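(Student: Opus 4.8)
The plan is to unwind the definitions, converting the local permutation $\alpha_i$ into a statement about the $B$-coordinate of the product action of $\Gamma$ on $T_m\times T_n$, and then to read off the value $\alpha_i(k)$ directly from the relator attached to a set of $S$. Throughout I identify the vertices of $T_m\times T_n$ with $\Gamma$ via $\gamma\leftrightarrow\gamma\cdot o$, so that the $\Gamma$-induced labelling puts the edge labelled $g$ between $h\cdot o$ and $hg\cdot o$, and so that $\pi_B$ intertwines the product action with the $T_n$-action, i.e. $\pi_B(g\cdot o)=g_{T_n}\cdot o_B$, where $g_{T_n}$ is the $T_n$-component of $g$.

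First I would record the basic geometric facts. Since $a_i\in\langle a_1,\dots,a_m\rangle=\Stab_\Gamma(o_B)$, the element $a_i$ fixes $o_B$ and its action on the edges of $T_n$ at $o_B$ is exactly $\alpha_i$; the same holds for every $a_j$. The edge at $o_B$ labelled $b_k$ is $\pi_B(e_k)$, where $e_k$ is the vertical edge at $o$ joining $o$ to $b_k\cdot o$. To evaluate $\alpha_i(k)$ I apply $a_i$ to $e_k$: as $\Gamma$ preserves the induced labelling, $a_i\cdot e_k$ is the vertical edge joining $a_i\cdot o$ to $a_ib_k\cdot o$, so its projection is the edge of $T_n$ at $o_B$ joining $o_B=\pi_B(a_i\cdot o)$ to $\pi_B(a_ib_k\cdot o)$.

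For the implication $(\Leftarrow)$ suppose $\{a_i,b_k,a_j,b_l\}\in S$. By Proposition \ref{prop:structure_set_bmw_bijection} the word $a_ib_ka_jb_l$ is a relator of $\Gamma$, and since the generators are involutions this rearranges to $a_ib_k=b_la_j$. Hence $\pi_B(a_ib_k\cdot o)=\pi_B(b_la_j\cdot o)=b_l\cdot o_B$, using that $a_j$ fixes $o_B$. Thus $a_i\cdot e_k$ projects to the edge at $o_B$ joining $o_B$ to $b_l\cdot o_B$, which is precisely the edge labelled $b_l$; that is, $\alpha_i(k)=l$. For the converse $(\Rightarrow)$, given $\alpha_i(k)=l$ I invoke condition (2) of Definition \ref{def: structure set}: the pair $\{a_i,b_k\}$ lies in a unique set of $S$, say $\{a_i,b_k,a_j,b_{l'}\}$. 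The direction just proved gives $\alpha_i(k)=l'$, and since $\alpha_i$ is a permutation this forces $l'=l$, so $\{a_i,b_k,a_j,b_l\}\in S$, as required.

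The content of the argument is carried entirely by the single identity $a_ib_k=b_la_j$, so I expect no genuine difficulty; the one place to be careful — and the main (mild) obstacle — is the geometric bookkeeping that turns this algebraic identity into a statement about edges at $o_B$. Concretely, one must verify that $a_i$ and $a_j$ both fix $o_B$ (so that $a_j$ contributes nothing to the $B$-coordinate and the image edge is genuinely based at $o_B$), and that $\pi_B$ may be computed factorwise on the product action. Once these identifications among group elements, vertices, edges and labels are fixed, the equivalence is immediate, including the degenerate cases where the four elements of a set in $S$ fail to be distinct.
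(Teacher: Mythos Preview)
Your proof is correct and follows essentially the same approach as the paper's: both arguments apply $a_i$ to the vertical edge at $o$ labelled $b_k$, project to $T_n$, and use the relation encoded by the structure set to identify the image edge as the one labelled $b_l$. The only cosmetic difference is that you extract the identity $a_ib_k=b_la_j$ directly from the relator and compute algebraically, whereas the paper draws the square in $T_m\times T_n$ and reads the labels geometrically; these are the same computation in different clothing, and your handling of the converse via uniqueness in Definition~\ref{def: structure set} matches the paper's reduction exactly.
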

\begin{proof}
Fix $1\leq i\leq m$ and $1\le k \le n$. Then, by the definition of a structure set, there exists a unique $\{a_i,b_k,a_j,b_l\} \in S$ containing both $a_i$ and $b_k$. 
Thus, to prove the claim, it is enough to show that $\alpha_i(k)=l$.
Let $e$ be the edge of $T_m$ that is labeled by $b_k$ and incident to $o_B$. 
To show that $\alpha_i(k)=l$, we need to show that the label of $a_i e$ is $b_l$.

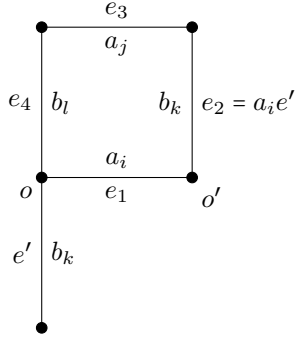
\begin{figure}[h]
	\begin{center}
		\begin{tikzpicture}
\node  at (1,2)  [above] {$e_3$};
\node  at (1,2)  [below] {$a_j$};

\node  at (1,0)  [above] {$a_{i}$};
\node  at (1,0)  [below] {$e_1$};

\node  at (0,1)  [left] {$e_4$};
\node  at (0,1)  [right] {$b_l$};

\node  at (2,1)  [right] {$e_2=a_ie'$};
\node  at (2,1)  [left] {$b_k$};

\node at (2,0) [below right] {$o'$}; 
\node at (0,0) [below left] {$o$}; 

\foreach \x in {0,2}{
	\foreach \y in {0,2}{
\filldraw [black] (\x,\y) circle (2pt);}}

\foreach \x in {0,2}{
\draw (0,\x) -- (2,\x);
}

\foreach \x in {0,2}{
\draw (\x,0) --(\x,2);
}

\draw (0,0) -- (0,-2);
\node  at (0,-1)  [left] {$e'$};
\node  at (0,-1)  [right] {$b_k$};
\filldraw [black] (0,-2) circle (2pt);
\end{tikzpicture}
	\end{center}
	\caption{Determining the  action  of $\alpha_i(k)$.}\label{fig:square}
\end{figure}

There exists a unique edge $e'\in T_m\times T_n$ labeled $b_k$ and incident to $o$. 
The element $a_i$ acts on $T_m\times T_n$ by mapping $o$ to the endpoint $o'$ of the unique edge $e_1$ labeled $a_i$ incident to $o$.
Since $\Gamma$ preserves the $\Gamma$-induced labeling, $a_ie'$ is the unique edge $e_2$ of $T_m\times T_n$ incident to $o'$ labeled $b_k$. The edges $e_1,e_2$ are adjacent edges of a (unique) square in $T_m\times T_n$.
Let $e_1,e_2,e_3,e_4$ be the edges of this square as shown in Figure \ref{fig:square}. 
By the definition of $S$, their respective $\Gamma$-induced labels are $a_i,b_k,a_j,b_l$.
We get that $a_i e=\pi_B(a_i e')=\pi_B(e_2)=\pi_B(e_4)$.
Since $e_4$ is incident to $o$, the label of $a_i e$ is the same as that of $e_4$, namely $b_l$.
\end{proof}

We call the involutions $\alpha_1,\ldots,\alpha_m \in \Sym(n)$ the \emph{$B$-tree local involutions}. 
Similarly, we can define the \emph{$A$-tree local involutions} $\beta_1,\ldots,\beta_n\in \Sym(m)$ corresponding to the local actions of $b_1,\ldots,b_n$ on the tree $T_m$.

\subsection{Virtual simplicity of BMW groups}
The following theorem is a corollary of \cite[Propositions 3.3.1, 3.3.2]{burgermozes2000localtoglobal} and \cite[Theorem 4.1]{burgermozes2000lattices}.

\begin{theorem}[Burger-Mozes]\label{thm: BM just infinite}
    Let $m,n\ge 6$, let $\Gamma$ be an irreducible BMW group of degree $(m,n)$, and assume that the local actions of $\Gamma$ on $T_m$ and $T_n$ contain the alternating groups $\Alt(m)$ and $\Alt(n)$ respectively. Then $\Gamma$ is hereditarily just-infinite.
\end{theorem}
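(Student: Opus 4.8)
The plan is to realize $\Gamma$ as an irreducible cocompact lattice in a product of two non-discrete closed groups acting on trees, and to feed this into the Burger--Mozes normal subgroup machinery. Write $H_A=\overline{\pi_A(\Gamma)}\le\Aut(T_m)$ and $H_B=\overline{\pi_B(\Gamma)}\le\Aut(T_n)$ for the closures of the two projections. Since $\Gamma$ acts simply transitively on the vertices of $T_m\times T_n$, it is a discrete cocompact subgroup of $\Aut(T_m)\times\Aut(T_n)$, and hence an irreducible cocompact lattice in $H_A\times H_B$ with dense projections. Because $\Gamma$ is irreducible, \cite[Proposition 1.2]{burgermozes2000lattices} (recalled above) shows both projections are non-discrete, so $H_A$ and $H_B$ are non-discrete. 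Finally, as $\pi_A(\Gamma)$ is dense in $H_A$ and the local action is just the image of a vertex stabiliser in the finite group $\Sym(E(v))$, the local action of $H_A$ on $T_m$ coincides with that of $\Gamma$ and therefore contains $\Alt(m)$; likewise the local action of $H_B$ contains $\Alt(n)$. Since $m,n\ge 6$, both local actions are $2$-transitive.

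With these hypotheses in place I would invoke \cite[Propositions 3.3.1, 3.3.2]{burgermozes2000localtoglobal}: for a non-discrete closed subgroup $H\le\Aut(T)$ whose local action is $2$-transitive, the subgroup $\FR{H}$ is a cocompact (hence open, finite-index) topologically simple normal subgroup. Applied to $H_A$ and $H_B$, this produces topologically simple, cocompact normal subgroups $\FR{H_A}\trianglelefteq H_A$ and $\FR{H_B}\trianglelefteq H_B$. Now $\Gamma$ is an irreducible cocompact lattice in $H_A\times H_B$ whose two factors each contain such a simple cocompact normal subgroup, so \cite[Theorem 4.1]{burgermozes2000lattices} applies and yields the normal subgroup property: every non-trivial normal subgroup of $\Gamma$ has finite index. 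As $\Gamma$ is infinite, this says exactly that $\Gamma$ is just-infinite.

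It remains to upgrade this to \emph{hereditary} just-infiniteness, which is the step I expect to require the most care: a finite-index subgroup $\Gamma'\le\Gamma$ need not have alternating local action, so Propositions 3.3.1--3.3.2 cannot simply be reapplied to it. Instead I would transport the simple factors found above. Set $K_A:=\overline{\pi_A(\Gamma')}$ and $K_B:=\overline{\pi_B(\Gamma')}$. Since $[\Gamma:\Gamma']<\infty$, the group $K_A$ is a closed finite-index subgroup of $H_A$, so $K_A\cap\FR{H_A}$ is open and of finite index in $\FR{H_A}$; its normal core in $\FR{H_A}$ is an intersection of finitely many conjugates, hence a non-trivial closed normal subgroup, and by topological simplicity it must be all of $\FR{H_A}$. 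Thus $\FR{H_A}\le K_A$, and since $\FR{H_A}\trianglelefteq H_A$ it is a cocompact topologically simple \emph{normal} subgroup of $K_A$; the same holds for $\FR{H_B}\le K_B$. In particular $K_A,K_B$ are non-discrete, so $\Gamma'$ is an irreducible cocompact lattice in $K_A\times K_B$ (cocompactness passes to the closed, $\Gamma'$-saturated subgroup $K_A\times K_B$ as a closed subspace of the compact quotient $\Gamma'\backslash(H_A\times H_B)$), and \cite[Theorem 4.1]{burgermozes2000lattices} again gives the normal subgroup property for $\Gamma'$. Hence every finite-index subgroup of $\Gamma$ is just-infinite, i.e.\ $\Gamma$ is hereditarily just-infinite.

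The crux is the last paragraph: the alternating-group hypothesis is genuinely not inherited by finite-index subgroups, so the verification cannot be re-run verbatim for $\Gamma'$. The resolution is to record the relevant structure once and for all at the level of the ambient groups --- as the topological simplicity of $\FR{H_A}$ and $\FR{H_B}$ --- and then to use that a topologically simple group admits no proper open finite-index subgroup, which forces these simple cores to persist inside the projection-closures of \emph{every} finite-index subgroup of $\Gamma$.
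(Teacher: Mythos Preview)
The paper does not prove this theorem; it simply records it as a corollary of \cite[Propositions~3.3.1, 3.3.2]{burgermozes2000localtoglobal} and \cite[Theorem~4.1]{burgermozes2000lattices}, and your proposal invokes exactly these results in the intended way. Your sketch is a correct and careful fleshing-out of how the cited statements combine, including the passage to finite-index subgroups via the persistence of the topologically simple cores $\FR{H_A},\FR{H_B}$.
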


For a group $\Gamma$, its \emph{finite residual} $\FR{\Gamma}$ is the intersection of all finite-index subgroups of $\Gamma$.
The following widely known lemma 
is a tool to prove virtual simplicity of a group.

\begin{lemma}\label{lem: simple finite residual}
    If 
    a group 
    $\Gamma$ is hereditarily just-infinite and is not residually finite, then $\FR{\Gamma}$ is a finite index, simple subgroup of $\Gamma$.
\end{lemma}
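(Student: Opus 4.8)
Write $N=\FR{\Gamma}$ for the finite residual, i.e. the intersection of all finite-index subgroups of $\Gamma$. The plan is to first show that $N$ is a nontrivial finite-index normal subgroup, then to observe that $N$ has \emph{no} proper finite-index subgroup, and finally to deduce simplicity by applying just-infiniteness to $N$ itself rather than to $\Gamma$.

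First I would record that $N$ is characteristic, and hence normal, in $\Gamma$: conjugation by any $g\in\Gamma$ permutes the finite-index subgroups of $\Gamma$ and so fixes their intersection. Since $\Gamma$ is not residually finite we have $N\neq\{1\}$. As $\Gamma$ is hereditarily just-infinite it is in particular just-infinite, so every nontrivial normal subgroup has finite index; applying this to $N$ gives $[\Gamma:N]<\infty$, which settles the finite-index claim.

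Next, because $N$ is finite-index in $\Gamma$ and $\Gamma$ is hereditarily just-infinite, $N$ is just-infinite by the very definition of that hypothesis; in particular $N$ is infinite. The key step is then to observe that $N$ admits no proper finite-index subgroup. Indeed, if $H\le N$ has finite index, then $H$ is also finite-index in $\Gamma$, whence $N=\FR{\Gamma}\subseteq H$; combined with $H\subseteq N$ this forces $H=N$. Equivalently, $\FR{N}=N$.

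To conclude, let $K\trianglelefteq N$ be a nontrivial normal subgroup. Since $N$ is just-infinite, $K$ has finite index in $N$, and by the previous paragraph this forces $K=N$. Hence $N$ has no proper nontrivial normal subgroup and is simple. The conceptual heart of the argument—and the only point needing care—is the identity $\FR{N}=N$, which upgrades the finite-index conclusion into the absence of any proper finite-index subgroup of $N$; everything else is a direct application of the just-infinite hypothesis, transported from $\Gamma$ to its finite-index subgroup $N$.
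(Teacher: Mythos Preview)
Your proof is correct and follows essentially the same approach as the paper: both argue that $\FR{\Gamma}$ is nontrivial (from non-residual-finiteness), hence finite-index (from just-infiniteness), hence itself just-infinite (from the hereditary hypothesis), and then combine this with the observation that $\FR{\Gamma}$ has no proper finite-index subgroup to conclude simplicity. Your write-up is slightly more detailed (e.g.\ explaining normality via characteristicity and spelling out $\FR{N}=N$), but the logical structure is identical.
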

\begin{proof}
 Since $\Gamma$ is not residually finite, the finite residual $\FR{\Gamma}$ is a non-trivial normal subgroup of $\Gamma$. By assumption $\Gamma$ is just-infinite, thus $\FR{\Gamma}$ must have finite index in $\Gamma$. As $\Gamma$ is hereditary just infinite, $\FR{\Gamma}$ is itself just-infinite, and thus cannot contain any non-trivial infinite-index normal subgroup. On the other hand, by definition and since $\FR{\Gamma}$ has finite index in $\Gamma$, $\FR{\Gamma}$ cannot have any non-trivial finite-index normal subgroup. Thus, $\FR{\Gamma}$ is simple.
\end{proof}

\section{Upper bounds on BMW counts}\label{sec:bounds}
In this section we give upper bounds for the number of conjugacy classes of involutive BMW groups.
We then use a result of Burger-Mozes-Zimmer to bound the number of BMW groups that are abstractly commensurable to a given BMW group with primitive local actions and simple type-preserving subgroup.

\subsection{Upper bound on conjugacy classes of involutive BMWs}

\begin{proposition} \label{prop:bmw_upper_count}
    There are at most $(mn)^{mn}$ conjugacy classes of involutive BMW groups of degree $(m,n)$. 
\end{proposition}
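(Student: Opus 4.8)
The plan is to count $(m,n)$-structure sets directly, since by Proposition~\ref{prop:structure_set_bmw_bijection} the set $\BMW_{\mathcal M}(m,n)$ is in bijection with $\calS_{m,n}$, and conjugacy classes of involutive BMW groups are counted (possibly with redundancy, which only helps an upper bound) by $\calS_{m,n}$ up to relabeling. So it suffices to bound $|\calS_{m,n}|$ from above by $(mn)^{mn}$.

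My approach is to use the partition-of-$K_{m,n}$ viewpoint from Remark~\ref{rmk: bipartite graphs}, or equivalently condition~(2) of Definition~\ref{def: structure set}: every pair $\{a,b\}$ with $a\in A_m$, $b\in B_n$ lies in exactly one element of the structure set. There are exactly $mn$ such pairs. The idea is that a structure set is completely determined by specifying, for each pair $(a_i,b_k)$, which element $\{a_i,b_k,a_j,b_l\}$ of $S$ contains it; that is, by specifying the ``partner'' data $(a_j,b_l)$ completing the $4$-cycle through the edge corresponding to $(a_i,b_k)$. First I would formalize this as a map $\calS_{m,n}\to (A_m\times B_n)^{A_m\times B_n}$ sending $S$ to the function $(a_i,b_k)\mapsto(a_j,b_l)$, where $\{a_i,b_k,a_j,b_l\}$ is the unique element of $S$ containing $\{a_i,b_k\}$.

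The key step is to check this map is injective: since each element of $S$ has the form $\{a_i,b_k,a_j,b_l\}$ and is, by condition~(2), recoverable from any of the pairs it contains, knowing the partner function recovers every set in $S$, hence recovers $S$. (One should note that the representation $\{a_i,b_k,a_j,b_l\}$ as an \emph{unordered} set introduces a mild ambiguity about which of $a_i,a_j$ and which of $b_k,b_l$ is the ``partner,'' as flagged in Remark~\ref{rmk: structure sets}, but this ambiguity only makes the true count smaller, so for an upper bound I may safely over-count by recording ordered partner data.) The codomain has size $(mn)^{mn}$, since there are $mn$ pairs and each is assigned an element of $A_m\times B_n$, a set of size $mn$. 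Therefore $|\calS_{m,n}|\le (mn)^{mn}$.

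The main obstacle, and the only point requiring genuine care rather than routine bookkeeping, is the injectivity argument together with handling the unordered-versus-ordered and degenerate-element subtleties of Remark~\ref{rmk: structure sets}; once it is clear that ordered partner data determines $S$, the final arithmetic is immediate. Since the number of conjugacy classes of involutive BMW groups equals $|\calS_{m,n}/\text{relabeling}|\le|\calS_{m,n}|$, this yields the stated bound of $(mn)^{mn}$.
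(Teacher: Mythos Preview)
Your proposal is correct and is essentially identical to the paper's own proof: the paper also defines, for each structure set $S$, the function $f_S\colon A_m\times B_n\to A_m\times B_n$ sending $(a,b)$ to the partner pair $(a',b')$ with $\{a,b,a',b'\}\in S$, observes that $S\mapsto f_S$ is injective, and concludes $|\iBMW(m,n)|\le|\BMW_{\calM}(m,n)|=|\calS_{m,n}|\le (mn)^{mn}$. Your worry about ordered-versus-unordered ambiguity is harmless but in fact unnecessary: given the set $\{a_i,b_k,a_j,b_l\}$ and the input pair $(a_i,b_k)$, the partner $(a_j,b_l)$ is uniquely determined as the remaining $A$-element and the remaining $B$-element (with the obvious interpretation in the degenerate cases), so $f_S$ is genuinely well-defined, not merely an over-count.
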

\begin{proof}
    Recall that $\calS_{m,n}$ denotes the set of $(m,n)$-structure sets.
    Every $S\in \calS_{m,n}$ defines a function $f_S :A_m \times B_n \to A_m \times B_n$ by $f_S(a,b) = (a',b')$ if $\{a,b,a',b'\}\in S$. 
    This function is well-defined by the definition of a structure set. 
    We can reconstruct $S$ from $f_S$ by $$S=\{\{a,b,a',b'\}\mid (a',b')=f_S(a,b), a\in A_m, b\in B_n\}.$$ 
    Thus we get an injective map $\calS_{m,n} \inj (A_m \times B_n )^{ A_m \times B_n}$ mapping $S\mapsto f_S$. Consequently:
    $$|\BMW(m,n)| \le |\BMW_{\mathcal M}(m,n)|=|\calS_{m,n}| \le|(A_m \times B_n )^{ A_m \times B_n}| = (mn)^{mn}$$
    where the equality $|\BMW_{\mathcal M}(m,n)|=|\calS_{m,n}|$ follows from the bijection in Proposition~\ref{prop:structure_set_bmw_bijection}.\qedhere
\end{proof}

\begin{remark}
For general (not necessarily involutive) BMW groups, using the $(m,n)$-datum defined in \cite{radu20newlattices} (which is analogous to structure sets defined here), a similar proof gives a number $\beta>0$ such that the number of conjugacy classes of BMW groups is bounded by $(mn)^{\beta mn}$.
\end{remark}

\subsection{$(m,n)$-complexes and type-preserving subgroups}
In this subsection we associate to each involutive BMW-group a certain edge-labeled square complex that completely describes the group.
These complexes will allow us to deduce a count on the number of involutive BMW groups with the same type-preserving subgroup up to conjugation.

\begin{definition} \label{def:complex}
	An \emph{$(m,n)$-complex} is a 
	VH-complex $Y$ such that
	\begin{enumerate}
		\item $Y$ has exactly 4 vertices: $v_{00}$, $v_{10}$, $v_{01}$ and $v_{11}$.
 		\item There are exactly $m$ edges between $v_{00}$ and $v_{10}$ (resp. $v_{01}$ and $v_{11}$), all of which are horizontal and labelled by distinct elements of $A_m$.
 		\item There are exactly  $n$ edges between $v_{00}$ and $v_{01}$ (resp. $v_{10}$ and $v_{11}$), all of which are vertical and labelled by distinct elements of $B_n$.
		\item For each horizontal edge $e_1$ and vertical edge $e_2$ of $Y$, there is a unique  square containing both $e_1$ and $e_2$. 
		\item There is a label-preserving  vertex-transitive  action on $Y$.
	\end{enumerate} 
\end{definition}

\begin{remark}
    By (4) above, an $(m,n)$-complex contains exactly $mn$ squares.  We also note that the label preserving automorphism group of an $(m,n)$-complex is precisely $\mathbb{Z}_2\times \mathbb{Z}_2$.
\end{remark}

\begin{lemma} \label{lem:complex_bmw_correspondence}
    Fix a marking on $T_m \times T_n$, and let $\Gamma$ be an involutive BMW group of degree $(m,n)$. Label the edges of $T_m \times T_n$ by the $\Gamma$-induced labeling.
    Then $\Gamma^+\backslash (T_m\times T_n)$ is an $(m,n)$-complex.
    
    Conversely, let $Y$ be an $(m,n)$-complex. Then the set $S$, consisting of the subsets $\{a_i,b_k,a_j,b_l\}$ where $a_i,b_k,a_j,b_l$ are the labels of the edges of squares in $Y$, is an $(m,n)$-structure set.
\end{lemma}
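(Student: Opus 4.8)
The plan is to prove the two directions separately, exploiting the bijection $\Phi$ from Proposition~\ref{prop:structure_set_bmw_bijection} and the fact that $\Gamma^+$ is a torsion-free index-4 subgroup of $\Gamma$.

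\textbf{Forward direction.} First I would recall that $\Gamma$ acts simply transitively on the vertices of $T_m \times T_n$, so the full quotient $\Gamma \backslash (T_m \times T_n)$ is a single vertex with loops; passing to the index-4 subgroup $\Gamma^+$ instead gives a quotient with exactly $[\Gamma : \Gamma^+] = 4$ vertices, establishing condition~(1). I would label these four vertices $v_{00}, v_{10}, v_{01}, v_{11}$ according to the two $\bbZ_2$-types (the bipartition classes of $T_m$ and of $T_n$), which is meaningful precisely because $\Gamma^+$ preserves both bipartitions. The key point is that the $\Gamma$-induced labeling descends to $\Gamma^+ \backslash (T_m \times T_n)$: since $\Gamma^+ \normalin \Gamma$ and the labeling is $\Gamma$-invariant, edges in the same $\Gamma^+$-orbit carry the same label. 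For condition~(2), I would count: the $m$ horizontal edges incident to a lift of $v_{00}$ have distinct labels in $A_m$ (by the marking), and because $\Gamma^+$ preserves types, each such edge joins the $T_m$-class of $v_{00}$ to the opposite class, i.e. to $v_{10}$; distinctness of labels follows because two horizontal edges at a vertex with the same label would force a nontrivial label-preserving stabilizer, contradicting that $\Gamma^+$ is torsion-free and acts freely. Condition~(3) is the vertical analogue. Condition~(4) is inherited directly from the product structure of $T_m \times T_n$: every horizontal-vertical pair at a vertex spans a unique square, and this descends to the quotient. Condition~(5) holds because the residual action of $\Gamma / \Gamma^+ \cong \bbZ_2 \times \bbZ_2$ on $\Gamma^+ \backslash (T_m \times T_n)$ is label-preserving and permutes the four vertices transitively.

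\textbf{Converse direction.} Given an $(m,n)$-complex $Y$, I would verify that the proposed collection $S$ of quadruples $\{a_i, b_k, a_j, b_l\}$ read off from the squares of $Y$ satisfies the two axioms of Definition~\ref{def: structure set}. Axiom~(1) is immediate from the VH-structure, since every square of $Y$ alternates between horizontal ($A_m$) and vertical ($B_n$) edges, so its boundary word has exactly the form $a_i b_k a_j b_l$. For axiom~(2) — that each pair $\{a, b\}$ lies in exactly one element of $S$ — the crucial input is condition~(4) of Definition~\ref{def:complex}: fixing the horizontal edge labeled $a$ at $v_{00}$ and the vertical edge labeled $b$ at $v_{00}$, there is a \emph{unique} square containing both, which determines a unique quadruple. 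The only subtlety is ruling out that the \emph{same} pair $\{a,b\}$ could also arise from a square based at a different vertex; here I would use the vertex-transitive label-preserving action from condition~(5) to transport any such square back to one at $v_{00}$, so that all occurrences of $\{a,b\}$ yield the same quadruple.

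\textbf{Main obstacle.} I expect the delicate step to be the bookkeeping around the bipartition labels in the forward direction: one must check that the four $\Gamma^+$-orbits of vertices correspond precisely to the four type-combinations and that the horizontal/vertical edges connect the correct pairs $(v_{00}, v_{10})$ and $(v_{00}, v_{01})$ as demanded by conditions~(2) and~(3). This requires carefully tracking how $\Gamma^+ = \Gamma \cap (\Aut^+(T_m) \times \Aut^+(T_n))$ interacts with the generators $a_i$ (which flip the $T_m$-type but fix the $T_n$-type) and $b_k$ (which do the opposite). Once the type-tracking is set up correctly, freeness of the $\Gamma^+$-action — guaranteed by torsion-freeness — makes the distinctness-of-labels and uniqueness-of-squares claims routine.
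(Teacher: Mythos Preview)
Your proposal is correct and follows essentially the same approach as the paper. The paper's own proof is extremely terse---it simply notes that $\Gamma^+$ acts freely, that edge labels descend to the quotient, that the residual $\Gamma/\Gamma^+\cong\bbZ_2\times\bbZ_2$ action gives vertex-transitivity, and that the converse ``follows from Definitions~\ref{def: structure set} and~\ref{def:complex}''---so your plan is really a careful expansion of exactly the checks the paper leaves implicit.
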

\begin{proof}
    Let $\Gamma$ be as in the statement of the lemma. 
    The  type-preserving subgroup $\Gamma^+ < \Gamma$ acts freely, so we can consider the quotient  complex $Z \coloneqq \Gamma^+\backslash (T_m\times T_n)$. 
    As edge labels pass to the quotient and as $\Gamma/\Gamma^+\simeq \bbZ_2\times \bbZ_2$ acts transitively on the vertices of $Z$, $Z$ is an $(m,n)$-complex as required. 
    The proof of the converse statement follows from Definitions~\ref{def: structure set}~and~\ref{def:complex}.
\end{proof}

We will need the following lemma counting the number of possible $(m,n)$-complexes with isomorphic square complexes.
\begin{lemma}\label{lem:square_iso}
    Given any $(m,n)$-complex $C$, there are at most $2(n!m!)^2$ distinct $(m,n)$-complexes that are isomorphic to $C$ as unlabeled square complexes (i.e. isomorphic via a cellular isomorphism that does not necessarily preserve labels or the VH-structure).
\end{lemma}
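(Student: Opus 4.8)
The plan is to count the cellular isomorphisms out of $C$ directly, organizing the count by the vertex permutation they induce. By Lemma~\ref{lem:complex_bmw_correspondence} an $(m,n)$-complex is recorded faithfully by its set of squares, so distinct $(m,n)$-complexes are genuinely distinct objects and an isomorphism of the underlying (unlabeled, VH-forgotten) square complexes is just a cellular bijection. I would first record two structural facts about any $(m,n)$-complex. First, by Definition~\ref{def:complex}(4) every square is the unique square on a given horizontal--vertical pair of its edges, so a cellular isomorphism $\phi\colon C\to C'$ is completely determined by its action on the $1$-skeleton; the squares are then forced. Second, the four vertices carry edges only along the $4$-cycle $v_{00}\,v_{10}\,v_{11}\,v_{01}$, whose two \emph{$m$-sides} $\{v_{00}v_{10},\,v_{01}v_{11}\}$ each carry $m$ edges and whose two \emph{$n$-sides} each carry $n$ edges. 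Thus on the $1$-skeleton $\phi$ consists of a permutation $\pi$ of the four vertices that is an automorphism of this weighted $4$-cycle, together with, for each side, a bijection from its bundle of parallel edges onto the bundle over the image side.

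Next I would pin down $\pi$. The automorphism group of the weighted $4$-cycle has order $8$ when $m=n$, and in it the ``translation'' subgroup fixing the partition into $m$-sides and $n$-sides has index $2$; when $m\neq n$ only this order-$4$ translation subgroup survives. The key point is that the translation subgroup is \emph{already realized by label-preserving automorphisms of $C$}: the label-preserving automorphism group of any $(m,n)$-complex is $\bbZ_2\times\bbZ_2$ and acts simply transitively on the four vertices, so it induces exactly these four vertex permutations. Precomposing $\phi$ with such an automorphism $g$ of $C$ leaves the target unchanged, since $\phi(C)=(\phi g)(C)$. Hence, after precomposition, every $(m,n)$-complex $C'$ unlabeled-isomorphic to $C$ is the image $\phi(C)$ of a cellular isomorphism whose vertex permutation $\pi$ lies in a fixed set of coset representatives of the translation subgroup inside the full weighted-$4$-cycle automorphism group. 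This set has size $2$ when $m=n$ (the nontrivial representative being the diagonal reflection that swaps the roles of the horizontal and vertical directions) and size $1$ when $m\neq n$; in all cases it has size at most $2$, which is the source of the leading constant.

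Finally I would count the remaining freedom. Once the vertex permutation $\pi$ is fixed to one of these at most $2$ representatives, $\phi$ is determined by its bundle bijections: one bijection on each of the two $m$-edge bundles and one on each of the two $n$-edge bundles, giving at most $(m!)^2(n!)^2$ choices. Each choice of $\pi$ together with these bundle bijections determines at most one target complex $C'=\phi(C)$. Multiplying, the number of $(m,n)$-complexes isomorphic to $C$ as unlabeled square complexes is at most $2\,(m!)^2(n!)^2=2(n!m!)^2$, as claimed.

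I expect the main obstacle to be the middle step: correctly identifying the automorphism group of the weighted $4$-cycle and verifying that its index-$2$ translation subgroup is exactly the vertex action of the built-in $\bbZ_2\times\bbZ_2$ of $C$, so that this symmetry is absorbed without changing the target and the only unabsorbed ambiguity is the single horizontal/vertical swap available when $m=n$. One must also be careful that it is precomposition by an automorphism of $C$ (rather than post-composition by one of $C'$) that fixes the image $\phi(C)$, and that the reduction ``$\phi$ is determined by the $1$-skeleton'' genuinely invokes the uniqueness of squares in Definition~\ref{def:complex}(4).
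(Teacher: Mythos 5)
Your argument is correct, but it takes a noticeably longer route than the paper's, which simply fixes the underlying square complex $Y$ of $C$ and counts the $(m,n)$-complex structures it can carry: at most $2$ suitable VH-structures (the choice of which pair of opposite edge-bundles is horizontal), and then at most $(m!)^2(n!)^2$ labelings (one permutation per bundle). Counting structures on a fixed complex never requires choosing an isomorphism, so the paper needs none of your middle step --- no weighted $4$-cycle, no $D_4$ versus its Klein subgroup, no absorption of the built-in $\bbZ_2\times\bbZ_2$. The two decompositions do match up exactly: your coset of vertex permutations modulo the label-preserving automorphisms is the paper's choice of VH-structure, and your bundle bijections are its labelings, so the numerology $2\,(m!)^2(n!)^2$ is the same. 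What your approach buys is precision on a point the paper glosses over: an $(m,n)$-complex has named vertices, so a naive count of structures on $Y$ would acquire an extra factor of $4$ from the vertex naming; your explicit quotient by the simply transitive $\bbZ_2\times\bbZ_2$ (precomposition by $\Aut(C)$, which indeed fixes the target) shows why that factor is spurious, i.e.\ why the count is really up to label-preserving isomorphism, which is all that is used in Lemma~\ref{lem:equal_type_preserving}. One step worth making explicit at the end: the datum $(\pi,\text{bundle bijections})$ determines the labels of the images of all edges of $C$, hence the label-quadruples of all squares of $C'$, i.e.\ its structure set, and it is via Lemma~\ref{lem:complex_bmw_correspondence} and Proposition~\ref{prop:structure_set_bmw_bijection} that this pins down $C'$.
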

\begin{proof}
	Let $Y$ be an the underlying square complex of an $(m,n)$-complex $C$. There are 
	at most
	two ways of giving $Y$ a 
	suitable 
	$VH$-structure.
	After choosing a $VH$-structure,  $Y$ has 4 vertices, $2m$ horizontal edges and $2n$ vertical edges, and there are at most $(n!m!)^2$ ways to choose labels to obtain an $(m,n)$-complex. Thus there are at most $2(n!m!)^2$ possible $(m,n)$-complexes that are isomorphic to $Y$ as unlabeled square complexes.
\end{proof}

The next lemma bounds the number of involutive BMW groups with conjugate type-preserving subgroups.

\begin{lemma} \label{lem:equal_type_preserving}
For each $\Gamma \in \BMW(m,n)$,
        there are at most $2(n!m!)^2$ conjugacy classes of $\Lambda \in \BMW(m,n)$ with $\Gamma^+$ conjugate to $\Lambda^+$.
\end{lemma}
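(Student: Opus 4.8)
The plan is to parametrize the relevant BMW groups by labelings of a single underlying square complex, turning the count into an application of Lemma~\ref{lem:square_iso}. Fix a marking $\mathcal M$ on $T_m\times T_n$ with base vertex $o$. For any $\Lambda\in\BMW(m,n)$, choose a representative and form the quotient $Y_\Lambda\coloneqq\Lambda^+\backslash(T_m\times T_n)$ equipped with the $\Lambda$-induced edge labeling; by Lemma~\ref{lem:complex_bmw_correspondence} this is an $(m,n)$-complex, and by reading off the labels of its squares it determines the structure set associated with $\Lambda$. Via Proposition~\ref{prop:structure_set_bmw_bijection} and the bijection between conjugacy classes of BMW groups and structure sets up to relabeling, the labeled complex $Y_\Lambda$ therefore determines the conjugacy class $[\Lambda]$.

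Now suppose $g\in\Aut(T_m)\times\Aut(T_n)$ conjugates $\Lambda^+$ to $\Gamma^+$. Since every such $g$ is a cellular automorphism of the square complex $T_m\times T_n$ and carries $\Lambda^+$-orbits to $\Gamma^+$-orbits (because $g\lambda g^{-1}\in\Gamma^+$ for $\lambda\in\Lambda^+$), it descends to a cellular isomorphism $Y_\Lambda\to Y_\Gamma$ of the underlying square complexes, forgetting labels and the VH-partition. Thus $Y_\Lambda$ lies in the set $\mathcal C$ of all $(m,n)$-complexes whose underlying square complex is isomorphic to that of $Y_\Gamma$. By Lemma~\ref{lem:square_iso}, $\lvert\mathcal C\rvert\le 2(n!m!)^2$.

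Finally, associate to each $Y\in\mathcal C$ the conjugacy class of the BMW group whose structure set is read off from $Y$, as in Lemma~\ref{lem:complex_bmw_correspondence}. By the first paragraph, for any $\Lambda$ with $\Lambda^+$ conjugate to $\Gamma^+$ the class $[\Lambda]$ is the image of $Y_\Lambda\in\mathcal C$ under this assignment. Hence the set $\{[\Lambda]\in\BMW(m,n) : \Lambda^+\sim\Gamma^+\}$ is contained in the image of $\mathcal C$, which has at most $\lvert\mathcal C\rvert\le 2(n!m!)^2$ elements, giving the bound. I expect the step needing the most care to be the descent in the second paragraph: one must check that conjugacy of the type-preserving subgroups yields a genuine isomorphism of the quotients \emph{as square complexes}, where the induced map respects the cell structure but a priori neither the labels nor the VH-partition. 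This is precisely why Lemma~\ref{lem:square_iso} is phrased for unlabeled square complexes, and why its count carries the factor $2$ for the two possible VH-structures together with the $(n!m!)^2$ choices of horizontal and vertical relabelings.
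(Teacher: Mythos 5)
Your proof is correct and follows essentially the same route as the paper: both arguments pass to the quotient $(m,n)$-complexes $\Lambda^+\backslash(T_m\times T_n)$, observe that conjugacy of the type-preserving subgroups forces these quotients to be isomorphic as unlabeled square complexes, and then combine Lemma~\ref{lem:square_iso} with Lemma~\ref{lem:complex_bmw_correspondence} and Proposition~\ref{prop:structure_set_bmw_bijection} to conclude that each of the at most $2(n!m!)^2$ compatible labelings determines at most one conjugacy class. Your explicit verification that the conjugating element descends to a cellular isomorphism of the quotients is a point the paper leaves implicit, but the argument is the same.
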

\begin{proof}
    Suppose that $\Gamma^+ = g \Lambda^+ g^{-1}$ for some $g \in \Aut(T_m) \times \Aut(T_n)$.
    We then have that $Y_1 := \Gamma^+ \backslash (T_m \times T_n)$ is isomorphic to $Y_2 := g\Lambda^+g^{-1} \backslash (T_m \times T_n)$ as unlabeled square complexes.
    By Lemma~\ref{lem:square_iso}, $Y_2$ can be one of at most $2(n!m!)^2$ possible $(m,n)$-complexes.
    By Lemma~\ref{lem:complex_bmw_correspondence} and Proposition~\ref{prop:structure_set_bmw_bijection}, such an $(m,n)$-complex completely determines $g \Lambda g^{-1}$ up to a conjugation.
    The lemma now follows.
\end{proof}

Recall that a subgroup $F\leq \Sym(n)$ is \emph{primitive} if no non-trivial partitions of $\{1,\dots,n\}$ is stabilised by $F$.
The following theorem is a reformulation  of a result of Burger--Mozes--Zimmer, which builds on a superrigidity theorem of Monod--Shalom \cite{monodshalom2004cocycle}:
\begin{theorem}[{\cite[Theorem 1.4.1]{bmz09lattices}}]\label{thm:bmzrigiidity}
	Let $G= (\Aut(T_m)\times \Aut(T_n))\rtimes R$, where   $R\leq \bbZ/2\bbZ$  permutes the factors when $m=n$ and is trivial otherwise. Let $\Gamma,\Gamma'\leq \Aut(T_m)\times \Aut(T_n)$ be cocompact lattices with primitive local actions.  Then any isomorphism $\phi:\Gamma \to \Gamma'$ is induced by conjugation in $G$, i.e. there exist some   $g\in G$ such that $ghg^{-1}=\phi(h)$ for all $h\in \Gamma$. 
\end{theorem}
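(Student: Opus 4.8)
The plan is to promote the abstract isomorphism $\phi$ to a \emph{geometric} one, that is, to an isometry of the product of trees $X = T_m \times T_n$ that conjugates the two actions. Write $G_0 = \Aut(T_m) \times \Aut(T_n)$, so that $G = G_0 \rtimes R$ and the full isometry group $\mathrm{Isom}(X)$ is identified with $G$, the factor $R$ accounting for the possible swap of the two tree factors when $m=n$. Both $\Gamma$ and $\Gamma'$ act properly discontinuously and cocompactly on $X$ via their inclusions into $G_0$, and $\phi$ equips $\Gamma$ with a \emph{second} cocompact action $\rho = \iota' \circ \phi$, where $\iota' : \Gamma' \hookrightarrow G_0$. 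The theorem is then equivalent to the statement that $\rho$ is conjugate to the standard action by some $g \in G$; indeed such a $g$ is exactly an element with $g h g^{-1} = \phi(h)$ for all $h \in \Gamma$.

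First I would use the primitivity hypothesis to place ourselves in the irreducible regime. A primitive local action is in particular transitive and non-trivial, and (via the analysis of Burger--Mozes together with the characterization recalled above, \cite[Proposition 1.2]{burgermozes2000lattices}) this forces the closures of the projections of $\Gamma$, and of $\Gamma'$, to $\Aut(T_m)$ and $\Aut(T_n)$ to be non-discrete; hence both lattices are irreducible. This is precisely the hypothesis under which superrigidity is available, and it rules out the degenerate reducible case of a product of two free groups, where abstract isomorphisms are genuinely not geometric.

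The core step is superrigidity. Composing $\phi$ with the two coordinate projections of $\Gamma' \le G_0$ yields homomorphisms $\Gamma \to \Aut(T_m)$ and $\Gamma \to \Aut(T_n)$ whose images are non-discrete, by the previous step, and non-amenable. I would then invoke cocycle/homomorphism superrigidity for irreducible cocompact lattices in products of locally compact groups acting on tree-like (CAT(0), boundary-equipped) targets, in the form developed by Monod--Shalom \cite{monodshalom2004cocycle} through bounded cohomology; this is exactly the engine behind \cite{bmz09lattices}. The output is that each coordinate homomorphism extends to a continuous homomorphism of the ambient group, and assembling the two coordinates produces a continuous homomorphism $\Phi : G_0 \to G_0$, possibly after precomposing with the flip in $R$ when $m=n$, that agrees with $\phi$ on $\Gamma$. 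Producing the requisite $\Gamma$-equivariant measurable boundary map and showing it is the boundary trace of an honest isometry is the technical heart of the argument. To conclude, since $\Phi$ restricts to an isomorphism of the cocompact lattices $\Gamma$ and $\Gamma'$, it is a continuous injective homomorphism with cocompact image and hence an automorphism of $G_0$; the continuous automorphisms of $\Aut(T_m) \times \Aut(T_n)$ are inner up to the flip when $m=n$, so $\Phi$ is conjugation by some $g \in G$, giving $ghg^{-1} = \phi(h)$.

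The main obstacle is the superrigidity step itself: constructing the boundary map and running the bounded-cohomology machinery of Monod--Shalom is the substantive content, and essentially all of the difficulty lies there, which is why the statement is imported wholesale from \cite{bmz09lattices}. Secondary care is needed to (i) extract non-discreteness of the projections from the mere primitivity of the local actions, a point that relies on the Burger--Mozes structure theory rather than being formal, and (ii) correctly bookkeep the factor swap recorded by $R$ in the case $m=n$, which is the only reason one must pass from $G_0$ to the larger group $G$.
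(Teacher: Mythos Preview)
The paper does not give its own proof of this statement: Theorem~\ref{thm:bmzrigiidity} is quoted as a reformulation of \cite[Theorem 1.4.1]{bmz09lattices} and used as a black box, with only the remark that it builds on the Monod--Shalom superrigidity theorem \cite{monodshalom2004cocycle}. Your sketch is a faithful outline of the strategy in that reference (irreducibility from primitive local actions via Burger--Mozes, superrigidity for the coordinate projections, assembly into an ambient automorphism, and bookkeeping of the factor swap), and you yourself note that the substantive content is imported wholesale; so there is nothing further to compare against in this paper.
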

Recall that two groups are \emph{abstractly commensurable} if they have isomorphic finite index subgroups. The previous theorem implies the following:
\begin{proposition}\label{prop: upper bound on commensurable BMWs}
		Let $\Gamma$  be an  involutive BMW  group of degree $(m,n)$ with primitive local actions and with $\Gamma^+$ simple. 
		Then, up to conjugacy, there are at most $2(n!m!)^2$ involutive BMW groups of degree $(m,n)$ that are abstractly commensurable with $\Gamma$.
\end{proposition}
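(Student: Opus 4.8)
The plan is to show that every involutive BMW group $\Lambda$ of degree $(m,n)$ with $\Lambda\comm\Gamma$ has $\Lambda^+$ conjugate to $\Gamma^+$, after which Lemma~\ref{lem:equal_type_preserving} immediately yields the bound $2(n!m!)^2$. The mechanism is that the simple group $\Gamma^+$ is recognisable inside any commensurable group as its finite residual.

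I would first record the elementary fact that an infinite simple group $S$ has no proper finite-index subgroup: if $H\le S$ has finite index, its normal core is a finite-index normal subgroup of $S$, hence equals $S$, so $H=S$. Applying this to the simple group $\Gamma^+$, every finite-index subgroup of $\Gamma$ meets $\Gamma^+$ in a finite-index subgroup of $\Gamma^+$ and therefore contains $\Gamma^+$; as $\Gamma^+$ itself has finite index, this gives $\FR{\Gamma}=\Gamma^+$, and the same reasoning gives $\FR{\Gamma_0}=\Gamma^+$ for every finite-index $\Gamma_0$ with $\Gamma^+\le\Gamma_0\le\Gamma$.

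Now suppose $\Lambda\comm\Gamma$, with an isomorphism $\phi\colon\Gamma_0\to\Lambda_0$ between finite-index subgroups. Since the finite residual is characteristic, $\phi(\Gamma^+)=\phi(\FR{\Gamma_0})=\FR{\Lambda_0}=:N$, and the no-proper-finite-index-subgroup fact applied to the simple group $N$ shows that $N$ lies in every finite-index subgroup of $\Lambda$; being itself of finite index in $\Lambda$ (it is finite index in $\Lambda_0$, which is finite index in $\Lambda$), we get $N=\FR{\Lambda}$. Thus $N\cong\Gamma^+$ is simple, normal in $\Lambda$, and contained in $\Lambda^+$. The isomorphic cocompact lattices $\Gamma^+$ and $N$ now fall under Theorem~\ref{thm:bmzrigiidity}, producing $g\in G$ with $g\Gamma^+g^{-1}=N$. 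To upgrade this to $\Lambda^+$ I use an orbit count: by Lemma~\ref{lem:complex_bmw_correspondence}, $\Gamma^+\backslash(T_m\times T_n)$ is an $(m,n)$-complex and so has exactly four vertices, i.e. $\Gamma^+$ has four orbits on the vertex set of $T_m\times T_n$; conjugation by $g$ is a bijection of this vertex set, so $N$ also has four orbits. On the other hand $\Lambda$ acts simply transitively on the vertices, so the number of $N$-orbits equals $[\Lambda:N]=[\Lambda:\Lambda^+]\,[\Lambda^+:N]=4\,[\Lambda^+:N]$. Comparing gives $[\Lambda^+:N]=1$, whence $\Lambda^+=N=g\Gamma^+g^{-1}$ is conjugate to $\Gamma^+$. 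Lemma~\ref{lem:equal_type_preserving} then bounds the number of such $\Lambda$ up to conjugacy by $2(n!m!)^2$; when $m\ne n$ one has $G=\Aut(T_m)\times\Aut(T_n)$, while for $m=n$ the possible swap of the two factors is absorbed into the factor $2$ of that lemma.

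The step I expect to be the main obstacle is verifying the hypotheses of Theorem~\ref{thm:bmzrigiidity}, namely that $\Gamma^+$ (and hence its conjugate $N$) has primitive local actions. Passing from $\Gamma$ to $\Gamma^+$ can shrink each local action by an index at most two: the local action of $\Gamma^+$ is the image of $\Stab_{\Gamma^+}(v)\trianglelefteq\Stab_{\Gamma}(v)$, hence a normal subgroup of the primitive local action of $\Gamma$, and so it is at least transitive, since nontrivial normal subgroups of a primitive group are transitive. Promoting this transitivity to the primitivity required by the cited rigidity theorem is the delicate point, and is where the hypothesis that $\Gamma$ has primitive local actions must be leveraged in full.
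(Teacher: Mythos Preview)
Your proof follows the same route as the paper's, with more detail filled in. The paper's argument is terse: since $\Gamma^+$ is simple, any $\Lambda\comm\Gamma$ contains a finite-index subgroup $H\cong\Gamma^+$; Theorem~\ref{thm:bmzrigiidity} applied to the isomorphism $\Gamma^+\to H$ shows $H$ is conjugate to $\Gamma^+$, hence type-preserving with four vertex orbits, whence $H=\Lambda^+$; Lemma~\ref{lem:equal_type_preserving} then gives the bound. Your finite-residual packaging and explicit orbit count reproduce exactly this reasoning.

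The concern you raise in your final paragraph is genuine, and the paper's own proof glosses over the same point: Theorem~\ref{thm:bmzrigiidity}, as stated in the paper, requires primitive local actions of the cocompact lattices to which it is applied, namely $\Gamma^+$ and $H$, whereas the hypothesis of the proposition is on $\Gamma$. As you observe, the local action of $\Gamma^+$ is a normal subgroup of index at most two in that of $\Gamma$, hence transitive; but an index-two normal subgroup of a primitive permutation group need not itself be primitive. Neither you nor the paper closes this gap at the level of generality stated. In the only place the proposition is actually invoked (Theorem~\ref{thm: count}), the local actions of $\Gamma$ are the full symmetric groups by Lemma~\ref{lem:generation}, so those of $\Gamma^+$ contain the alternating groups and are certainly primitive; the argument is sound there. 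For the proposition in its stated generality one must appeal to the sharper form of Burger--Mozes--Zimmer rigidity in \cite{bmz09lattices}, whose hypotheses are really about the closures of the projections of $\Gamma$ to the two factors rather than about the local actions of the specific lattice being rigidified.
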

\begin{proof}
	Let $\Lambda$ be an involutive BMW group of degree $(m,n)$ that is abstractly commensurable to $\Gamma$. 
	As $\Gamma^+$ is simple, $\Lambda$  contains a finite index subgroup $H$ that is isomorphic to $\Gamma^+$.
	It follows from Theorem \ref{thm:bmzrigiidity} that $H$ is type-preserving and has four orbits of vertices, so $H=\Lambda^+$.  
	By Theorem~\ref{thm:bmzrigiidity}, $\Gamma^+$ is conjugate to $\Lambda^+$. 
	The result now follows from Lemma~\ref{lem:equal_type_preserving}.
\end{proof}

\section{Counting commensurability classes of BMW groups}\label{sec:counting_commensurability}

In order to count commensurability classes of BMW groups, we first define a partial structure set $S_0$ (see definition below). We  show in Theorem \ref{thm: containing P implies simple of index 4} that if $\Gamma$ is an involutive BMW group whose structure set contains $S_0$, then the index 4 subgroup $\Gamma^+$ is simple. 
We then deduce the lower bound of  Theorem \ref{main thm: counting} by showing that there are sufficiently many such $\Gamma$.

\begin{definition}
    A \emph{partial structure set} $S_0$ is
a collection of subsets of $A_m\sqcup B_n$ of the form $\{a_i,b_k,a_j,b_l\}$, for $a_i,a_j\in A_m$ and $b_k,b_l\in B_n$, such that for every $a\in A_m$ and $b\in B_n$ \emph{at most} one subset of $S_0$ contains both $a$ and $b$. 
\end{definition}

Our starting point is the following involutive BMW group $\Delta$ (denoted as $\Gamma_{4,5,9}$ by Radu \cite{radu20newlattices}).

\begin{theorem*}[{\cite[Theorem 5.5]{radu20newlattices}}]
	The involutive BMW group $\Delta$ of degree $(4,5)$, whose associated $(4,5)$-structure set  is 
	\[S_\Delta\coloneqq \left\{ \begin{split}
		& \{a_1,b_1,a_1,b_1\}, 
		\{a_1,b_2,a_1,b_2\},
		\{a_2,b_3,a_1,b_3\},
		\{a_2,b_1,a_2,b_1\}\\
		& \{a_3,b_2,a_2,b_2\}, 
		\{a_3,b_3,a_3,b_1\}, 
		\{a_1,b_4,a_1,b_4\},
		\{a_4,b_5,a_1,b_5\}\\ & 
		\{a_3,b_5,a_2,b_4\}, 
		\{a_4,b_2,a_4,b_1\}, 
		\{a_4,b_4,a_4,b_3\}
	\end{split}
\right\},\] 
	satisfies $\FR{\Delta} = \Delta^+$, where $\FR{\Delta}$ is the intersection of all finite index subgroups of~$\Delta$.
\end{theorem*}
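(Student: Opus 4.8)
The plan is to prove the stronger statement that $\Delta^+$ is infinite and abstractly simple, from which $\FR{\Delta}=\Delta^+$ follows formally. One inclusion is automatic: $\Delta^+$ has index $4$ in $\Delta$, so it is one of the finite-index subgroups whose intersection defines $\FR{\Delta}$, whence $\FR{\Delta}\subseteq\Delta^+$. For the reverse inclusion I would use that for any finite-index subgroup $H\le\Delta$ the intersection $H\cap\Delta^+$ has finite index in $\Delta^+$ and hence contains $\FR{\Delta^+}$; intersecting over all such $H$ gives $\FR{\Delta^+}\subseteq\FR{\Delta}$. Now $\Delta^+$ is infinite (it has finite index in the infinite group $\Delta$), so \emph{if} $\Delta^+$ is simple then its only finite-index normal subgroup is $\Delta^+$ itself and $\FR{\Delta^+}=\Delta^+$; combining the inclusions yields $\Delta^+=\FR{\Delta^+}\subseteq\FR{\Delta}\subseteq\Delta^+$, i.e.\ $\FR{\Delta}=\Delta^+$. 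Thus everything reduces to the simplicity of $\Delta^+$.

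The first concrete step is to read off the local actions from $S_\Delta$ using Lemma~\ref{lem: from structure set to local actions}. A direct computation gives the $B$-tree local involutions $\alpha_1=\id$, $\alpha_2=(4\,5)$, $\alpha_3=(1\,3)(4\,5)$, $\alpha_4=(1\,2)(3\,4)$ in $\Sym(5)$, and the $A$-tree local involutions $\beta_1=\id$, $\beta_2=\beta_4=(2\,3)$, $\beta_3=(1\,2)$, $\beta_5=(1\,4)(2\,3)$ in $\Sym(4)$. The group $\langle\alpha_1,\dots,\alpha_4\rangle$ is transitive on five points and contains the transposition $(4\,5)$; since a transitive group of prime degree is primitive, and a primitive permutation group containing a transposition is the full symmetric group, the $B$-tree local action is all of $\Sym(5)$. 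Likewise $\langle\beta_1,\dots,\beta_5\rangle$ contains $(1\,2)$, $(2\,3)$ and $(1\,4)=\beta_5\beta_2$, hence equals $\Sym(4)$. In particular both local actions are primitive and $2$-transitive, which is exactly the input for the rigidity and simplicity machinery.

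With the local data in hand I would follow the Burger--Mozes route. I would first verify that both projections are nondiscrete, by checking that the induced actions of $\Delta^+$ on balls of growing radius in $T_4$ and $T_5$ do not stabilise, so that the closures $H_A:=\overline{\mathrm{pr}_A(\Delta^+)}\le\Aut^+(T_4)$ and $H_B:=\overline{\mathrm{pr}_B(\Delta^+)}\le\Aut^+(T_5)$ contain nontrivial elements fixing arbitrarily large balls; in particular $\Delta$ is irreducible by \cite[Proposition~1.2]{burgermozes2000lattices}. Being an irreducible cocompact lattice with primitive local actions and nondiscrete projection closures, $\Delta$ is in the scope of the simplicity results of Burger--Mozes. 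The argument then runs as follows: one shows $H_A$ and $H_B$ are topologically simple; for any nontrivial normal subgroup $N\trianglelefteq\Delta^+$ the subgroups $\overline{\mathrm{pr}_A(N)}\trianglelefteq H_A$ and $\overline{\mathrm{pr}_B(N)}\trianglelefteq H_B$ are nontrivial (the projections are injective on $\Delta^+$ by irreducibility) and hence everything, so $N$ projects densely to both factors; the normal subgroup theorem (\cite[Theorem~4.1]{burgermozes2000lattices} with the local-to-global analysis of \cite{burgermozes2000localtoglobal}) then forces $N=\Delta^+$. Thus $\Delta^+$ is abstractly simple, and by the first paragraph $\FR{\Delta}=\Delta^+$.

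The decisive obstacle is the topological simplicity of the projection closures, equivalently the non-residual-finiteness of $\Delta^+$. The clean Burger--Mozes theorems (for instance Theorem~\ref{thm: BM just infinite}) are stated for degrees $m,n\ge 6$, where $\Alt(m)$ and $\Alt(n)$ are simple and the local-to-global argument delivers non-discreteness and Tits' independence property~(P) automatically; the degree $m=4$ here lies below that threshold (note that $\Alt(4)$ is not simple), so these results cannot be quoted directly and the required properties must be established by hand for this specific group. Concretely, the hard step is to show that $H_A$ and $H_B$ satisfy property~(P) — or, alternatively, to identify them with the universal groups $U(\Sym(4))^+$ and $U(\Sym(5))^+$ by verifying that the local actions on balls of all radii are full — since property~(P) together with the $2$-transitive local action forces simplicity and thereby forbids any proper finite-index subgroup of $\Delta^+$ that is normal in $\Delta$. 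I expect this verification to be the technical heart of the proof, and the step most likely to require a delicate combinatorial analysis of $S_\Delta$ or a computer-assisted computation of the local actions on balls of small radius.
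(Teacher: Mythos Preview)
The paper does not prove this statement: it is quoted from Radu \cite[Theorem~5.5]{radu20newlattices} and used as a black box in Section~\ref{sec:counting_commensurability} (see Lemma~\ref{lem:finite_residual} and the proof of Theorem~\ref{thm: containing P implies simple of index 4}). There is thus no in-paper argument to compare your proposal against.

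On the proposal itself: the reduction---if $\Delta^+$ is infinite and simple then $\FR{\Delta}=\Delta^+$---is valid, and your local-action computations are correct. But what you have written is not a proof. You rightly observe that Theorem~\ref{thm: BM just infinite} requires $m,n\ge 6$ and therefore does not apply at degree $(4,5)$; you then propose to recover its ingredients (nondiscreteness of both projections, topological simplicity of their closures, or identification with the universal groups $U(\Sym(4))^+$ and $U(\Sym(5))^+$) ``by hand'', and you explicitly flag this as ``the technical heart of the proof'' likely requiring computer assistance. That flagged step \emph{is} the theorem: everything preceding it is formal bookkeeping. In particular, non-residual-finiteness of $\Delta$---equivalently, that $\FR{\Delta}$ has finite index---is exactly what must be established, and your outline does not establish it. The general Burger--Mozes framework is not known to close at these small degrees without input specific to the group in question; Radu's argument proceeds by a direct analysis of this particular lattice (including an explicit verification that $\Delta$ has only four finite quotients) rather than by instantiating the abstract machinery you sketch, and that is what one must actually supply here.
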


Let $\alpha^\Delta_{1}, \dots, \alpha^\Delta_{4}$ be the corresponding $B$-tree  local involutions of $S_\Delta$, and let $\beta^\Delta_{1}, \dots, \beta^\Delta_{5}$ be the $A$-tree local involutions.
The $A$-tree and $B$-tree local involutions can be computed explicitly (using Lemma~\ref{lem: from structure set to local actions}) 
to show that $\gen{\alpha^\Delta_1,\ldots,\alpha^\Delta_4} = \Sym(5)$ and $\gen{\beta^\Delta_1,\ldots,\beta^\Delta_5} = \Sym(4)$.
Thus, the local actions of $\Delta$ on $T_4$ and $T_5$ are $\Sym(4)$ and $\Sym(5)$ respectively.
Let $m\ge 13, n\ge 14$ be integers.

For natural numbers $k \le k'$, denote by $\llbracket k,k'\rrbracket:=\{k,k+1,\dots,k'\}$. 
Fix the following three involutions in $\Sym(\llbracket 6, n\rrbracket)$:
\begin{align*}
	\alpha'_1 &= (7 \quad 10) (8 \quad 11) (12 \quad 13) (14 \quad 15) (16 \quad 17)\dots \\
	\alpha'_2 &= (9 \quad 7)(10 \quad 8)(11 \quad 12)(13 \quad 14)(15 \quad 16)\dots\\
	\alpha'_3 &= (6 \quad 9).
\end{align*}
Note that these three involutions generate  $\Sym(\llbracket6,n\rrbracket)$.
Similarly, fix the following three involutions of $\Sym(\llbracket5, m\rrbracket)$:
\begin{align*}
	\beta'_1 &= (6 \quad 9) (7 \quad 10) (11 \quad 12) (13 \quad 14) (15 \quad 16)\dots \\
	\beta'_2 &= (8 \quad 6)(9 \quad 7)(10 \quad 11)(12 \quad 13)(14 \quad 15)\dots\\
	\beta'_3 &= (5 \quad 8).
\end{align*}
These involutions  generate $\Sym(\llbracket5,m\rrbracket)$.

Define the following partial structure sets:
\begin{align*}
	S_{AR} &= \{ \{a_{i},b_k, a_{i}, b_{\alpha_i'(k)}\}~|~ 1 \le i \le 3, ~6 \le k \le n\}\\
	S_{BR} &= \{\{a_{i},b_k,  a_{\beta_k'(i)},b_{k}\}~|~5 \le i \le m,~ 1 \le k \le 3\}\\
	S_{AC}&= \{\{a_4,b_k,a_4,b_k\} ~| ~6\le k\le 8\}\\
	S_{BC}&= \{\{a_i,b_k,a_i,b_k\} ~|~ 5 \le i\le 7,~ 4\le k\le 5\}\\
	S_{AB}&= \{\{a_{i},b_k ,a_{\beta_{k-5}'(i)}, b_{\alpha_{i-4}'(k)}\}~|~ 5 \le i \le 7, ~6 \le k \le 8\}\\
	S_{A} &= \{\{a_{i},b_k ,a_{i}, b_{\alpha_{i-4}'(k)}\}~|~ 5 \le i \le 7, ~9 \le k \le n, ~\alpha_{i-4}'(k)> 8 \}\\
	S_{B} &= \{\{a_i, b_k, a_{\beta_{k-5}'(i)}, b_{k}\}~| ~ 8 \leq i \le m,~ 6 \leq k \le 8, ~\beta_{k-5}'(i)> 7 \}\\
	S_{M} &= \{\{a_{4},b_{n},a_{m},b_{n-1}\},~\{a_{m-2},b_4,a_{m-1},b_{n-2}\}\} \\
	S_{C1} &= \{\{a_i,b_{n},a_i,b_{n}\},~\{a_i,b_{n-1},a_i,b_{n-1}\} ~|~ 8\le i<m\} \\
	S_{C2} &= \{\{a_{m-1},b_k,a_{m-1},b_k\},~ \{a_{m-2},b_k,a_{m-2},b_k\}~|~  9\leq k\leq n-3\}.
\end{align*}
Let 
\[S_0:= S_\Delta \cup S_{AR} \cup S_{BR} \cup S_{AC} \cup S_{BC} \cup S_{AB}\cup S_A \cup S_B \cup S_M \cup S_{C1} \cup S_{C2}.\]
See Figure \ref{fig: partial structure set P} and Table \ref{table:partial_ss_new}.

\begin{figure}
    \centering
    \includegraphics{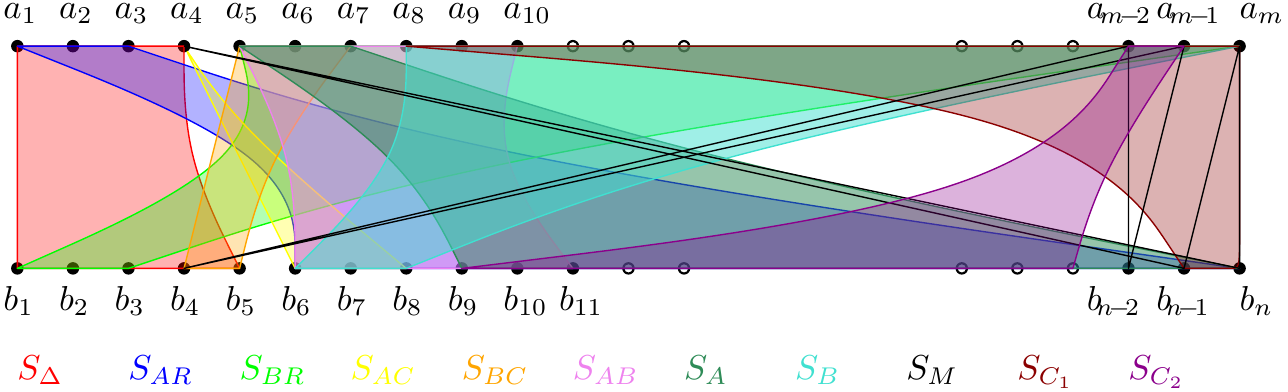}
    \caption{The partial structure set $S_0$ gives rise to a partial partition of the complete bipartite graph $K_{m,n}$ in the sense of Remark \ref{rmk: bipartite graphs}, each set in $S_0$ corresponds to a 4-cycle in $K_{m,n}$ contained in the associated highlighted subset in the figure.}
    \label{fig: partial structure set P}
\end{figure}

\begin{table}[]
	\centering
	\begin{tabular}{l|l|l|l|l|l|l|l}
							&$a_1$--$a_3$& $a_4$	& $a_5$--$a_7$			&$a_8$--$a_{10}$  		&$a_{11}$--$a_{m-3}$    &$a_{m-2}$--$a_{m-1}$	&$a_{m}$\\
		\hline
		$b_1$--$b_3$		&$S_\Delta$	&$S_\Delta$	&$S_{BR}$				&$S_{BR}$				&$S_{BR}$               &$S_{BR}$				&$S_{BR}$\\
		$b_{4}$ 			&$S_\Delta$	&$S_\Delta$	&$S_{BC}$				&						&                       &$S_{M}$				&\\
		$b_{5}$ 			&$S_\Delta$	&$S_\Delta$	&$S_{BC}$ 				&						&                       &						&\\
		$b_6$--$b_8$ 		&$S_{AR}$	&$S_{AC}$	&$S_{AB}$				&$S_{AB}$ \& $S_{B}$	&$S_B$                  &$S_{B}$				& $S_{B}$\\
		$b_9$--$b_{11}$ 	&$S_{AR}$	&			&$S_{AB}$ \& $S_{A}$	&$S_{AB}$				&                       &$S_{C_2}$				&\\
		$b_{12}$--$b_{n-3}$ &$S_{AR}$	&			&$S_{A}$	            &       				&                       &$S_{C_2}$				&\\
		$b_{n-2}$ 			&$S_{AR}$	&			& $S_{A}$				&					    &                    	&$S_{M}$				&\\
		$b_{n-1}$--$b_n$	&$S_{AR}$	&$S_{M}$	& $S_{A}$				&$S_{C_1}$			    &$S_{C_1}$          	&$S_{C_1}$ 				&$S_{M}$\\
	\end{tabular}
	\vspace{.2in}
	\caption{
	Table showing which partial structure sets  possibly
	possess $\{a_i,b_k\}$ as a subset of one its elements. For example, if $\{a_6,b_4\}$ is a subset of some $s\in S_0$, then $s\in S_{BC}$. }\label{table:partial_ss_new}
\end{table}

It is straightforward to check that for any $1 \le i \le m$ and $1 \le k \le n$, $\{a_i,b_k\}$ is a subset of at most one set in $S_0$, making $S_0$ a partial structure set.

\begin{theorem}\label{thm: containing P implies simple of index 4}
    If an $(m,n)$-structure set $S$ contains $S_0$, then its associated involutive BMW group $\Gamma$ satisfies that $\Gamma^+$ is simple.
\end{theorem}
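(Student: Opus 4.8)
The plan is to prove that, for any $(m,n)$-structure set $S\supseteq S_0$, the associated involutive BMW group $\Gamma$ is hereditarily just-infinite and \emph{not} residually finite, so that its finite residual $\FR{\Gamma}$ is a finite-index simple subgroup by Lemma~\ref{lem: simple finite residual}, and then to identify $\FR{\Gamma}$ with $\Gamma^+$. The first ingredient is the size of the local actions. Using Lemma~\ref{lem: from structure set to local actions} I would read the $B$-tree involutions $\alpha_1,\dots,\alpha_m$ and $A$-tree involutions $\beta_1,\dots,\beta_n$ off $S_0$: the block $S_\Delta$ forces $\alpha_1,\dots,\alpha_4$ (resp. $\beta_1,\dots,\beta_5$) to act on $\{1,\dots,5\}$ (resp. $\{1,\dots,4\}$) as the involutions of $\Delta$, which generate $\Sym(5)$ (resp. $\Sym(4)$); the blocks $S_{AR}$ and $S_{BR}$ force $\alpha_1,\alpha_2,\alpha_3$ and $\beta_1,\beta_2,\beta_3$ to restrict to $\alpha_1',\alpha_2',\alpha_3'$ and $\beta_1',\beta_2',\beta_3'$ on the tails $\llbracket 6,n\rrbracket$ and $\llbracket 5,m\rrbracket$, which were chosen to generate the full symmetric groups there; and the remaining blocks, notably the mixing set $S_M$, glue the two index ranges into a single transitive action. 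Carrying this out shows the $B$-tree and $A$-tree local actions of $\Gamma$ are all of $\Sym(n)$ and $\Sym(m)$, hence contain $\Alt(n)$ and $\Alt(m)$.

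Next I would locate a copy of $\Delta$ inside $\Gamma$. Since every set of $S_\Delta$ has the form $\{a_i,b_k,a_j,b_l\}$ with $a_i,a_j\in A_4$ and $b_k,b_l\in B_5$, the restriction of $S$ to $A_4\sqcup B_5$ equals $S_\Delta$ and is closed, in the sense that the squares through the base vertex involving the labels $a_1,\dots,a_4,b_1,\dots,b_5$ use only these labels. Hence the sub-square-complex of $T_m\times T_n$ they span immerses as a copy of $T_4\times T_5$; as a local isometry of CAT(0) square complexes this immersion is $\pi_1$-injective and embeds it onto a convex subcomplex, so $\langle a_1,\dots,a_4,b_1,\dots,b_5\rangle\le\Gamma$ acts simply transitively on its vertices with structure set $S_\Delta$, and is therefore isomorphic to $\Delta$ by Proposition~\ref{prop:structure_set_bmw_bijection}. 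By Radu's theorem $\FR{\Delta}=\Delta^+\neq 1$, so $\Delta$ is not residually finite; since residual finiteness passes to subgroups, $\Gamma$ is not residually finite. In particular $\Gamma$ has no finite-index subgroup isomorphic to a product of two free groups (as these are residually finite), so $\Gamma$ is irreducible.

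With irreducibility and $\Alt(m),\Alt(n)$ contained in the local actions in hand, Theorem~\ref{thm: BM just infinite} shows $\Gamma$ is hereditarily just-infinite, and then Lemma~\ref{lem: simple finite residual} shows $\FR{\Gamma}$ is a finite-index simple subgroup of $\Gamma$. It remains to prove $\FR{\Gamma}=\Gamma^+$. One inclusion is immediate, as the finite residual lies in every finite-index subgroup. For the reverse, intersecting the finite-index subgroups of $\Gamma$ with $\Delta\le\Gamma$ gives $\FR{\Gamma}\cap\Delta\supseteq\FR{\Delta}=\Delta^+$, so $\Delta^+\le\FR{\Gamma}$. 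Consequently, every finite quotient $q\colon\Gamma\to F$ satisfies $q(\Delta^+)=1$, which forces $q(a_1)=\dots=q(a_4)$ and $q(b_1)=\dots=q(b_5)$ (these generators coincide modulo the type-preserving subgroup $\Delta^+$, since $a_ia_j,\,b_kb_l\in\Delta^+$). I would then propagate these identifications through the relations of $R_{S_0}$: the diagonal sets $\{a_i,b_k,a_i,b_k\}$ appearing in $S_{AC},S_{BC},S_{C1},S_{C2}$ and at the fixed points of the $\alpha_i',\beta_k'$ give commuting relations $q(a_i)q(b_k)=q(b_k)q(a_i)$, while the transitivity of the $\alpha'$- and $\beta'$-actions together with $S_M$ transports the equalities across all remaining indices. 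The conclusion is that $q(a_1)=\dots=q(a_m)$ and $q(b_1)=\dots=q(b_n)$ are two commuting involutions generating $F$, i.e. $F$ is a quotient of $\mathbb{Z}_2\times\mathbb{Z}_2$. Since this holds for every finite quotient, $\Gamma/\FR{\Gamma}$ is a quotient of $\mathbb{Z}_2\times\mathbb{Z}_2$; as it also surjects onto $\Gamma/\Gamma^+\cong\mathbb{Z}_2\times\mathbb{Z}_2$, we obtain $\FR{\Gamma}=\Gamma^+$, and hence $\Gamma^+=\FR{\Gamma}$ is simple.

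The main obstacle is the combinatorial bookkeeping concentrated in the first and last steps: verifying from the explicit description of $S_0$ (via Table~\ref{table:partial_ss_new} and Figure~\ref{fig: partial structure set P}) that the local involutions really generate $\Sym(m)$ and $\Sym(n)$, and that the relations of $R_{S_0}$ suffice to collapse every finite quotient onto $\mathbb{Z}_2\times\mathbb{Z}_2$. Both hinge on the engineered choices of $\alpha_1',\alpha_2',\alpha_3'$ and $\beta_1',\beta_2',\beta_3'$ (their generating the symmetric groups, and the precise locations of their fixed points that yield genuine commuting relations) and on the mixing set $S_M$ linking the $S_\Delta$-block to the tails. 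Keeping track of the many index ranges so that they cover all pairs $\{a_i,b_k\}$ without gaps or overlaps, while driving the quotient $\Gamma/\FR{\Gamma}$ down to $\mathbb{Z}_2\times\mathbb{Z}_2$, is the delicate part of the argument.
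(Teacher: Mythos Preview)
Your proposal is correct and follows essentially the same approach as the paper: establish full symmetric local actions from the blocks of $S_0$ (Lemma~\ref{lem:generation}), embed $\Delta$ to get non-residual-finiteness and irreducibility, apply Theorem~\ref{thm: BM just infinite} and Lemma~\ref{lem: simple finite residual}, and finally show $\FR{\Gamma}=\Gamma^+$ by propagating the identifications $\bar a_i=\bar a$, $\bar b_k=\bar b$ through the relations of $S_0$ (Lemma~\ref{lem:finite_residual}). The only cosmetic differences are that the paper cites \cite[Proposition~4.2(vii)]{Caprace-survey} for the embedding $\Delta\le\Gamma$ where you argue via a CAT(0) local isometry, and the paper organizes the propagation step via a Schreier-graph non-bipartiteness argument rather than your fixed-point/commutation bookkeeping; both routes work.
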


To prove the above theorem, we need to show a few lemmas first.
Throughout, let $S$ be an $(m,n)$-structure set containing $S_0$ and let $\Gamma$ be its associated involutive BMW group.

\begin{lemma}\label{lem:generation}
     The $A$-tree and $B$-tree local actions of $\Gamma$ are $\Sym(m)$ and $\Sym(n)$ respectively.
\end{lemma}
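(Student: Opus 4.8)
The goal is to show that the $B$-tree local action $\langle \alpha_1,\dots,\alpha_m\rangle \le \Sym(n)$ is all of $\Sym(n)$, and symmetrically that the $A$-tree local action is all of $\Sym(m)$; I will only describe the $B$-tree case since the $A$-tree case is entirely analogous by the symmetry of the construction of $S_0$. By Lemma~\ref{lem: from structure set to local actions}, the permutation $\alpha_i \in \Sym(n)$ is completely determined by which subsets of the form $\{a_i,b_k,a_j,b_l\}$ lie in $S$: we have $\alpha_i(k)=l$ exactly when such a subset is in $S$. The plan is to use the inclusion $S_0 \subseteq S$ together with the fact that $S$ is a \emph{full} structure set to pin down enough of each $\alpha_i$ to force the generated group to be all of $\Sym(n)$.

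First I would read off the local involutions coming from the pieces of $S_0$ that are designed for exactly this purpose, namely $S_{AR}$ and the pieces $S_A$, $S_{AB}$, $S_{AC}$ involving small indices $i$. The block $S_{AR}=\{\{a_i,b_k,a_i,b_{\alpha_i'(k)}\}\mid 1\le i\le 3,\ 6\le k\le n\}$ is built precisely so that on the range $\llbracket 6,n\rrbracket$ the involution $\alpha_i$ agrees with the prescribed $\alpha_i'$ for $i=1,2,3$; by the remark in the paper, $\alpha_1',\alpha_2',\alpha_3'$ already generate $\Sym(\llbracket 6,n\rrbracket)$. So the subgroup generated by $\alpha_1,\alpha_2,\alpha_3$ contains $\Sym(\llbracket 6,n\rrbracket)$, the symmetric group on all but the first five points. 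Meanwhile the structure set $S_\Delta$ controls $\alpha_1,\dots,\alpha_4$ on the points $\{1,\dots,5\}$, and the paper already records that the local involutions of $S_\Delta$ generate $\Sym(5)=\Sym(\{b_1,\dots,b_5\})$.

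The heart of the argument is then to glue these two symmetric groups $\Sym(\{1,\dots,5\})$ and $\Sym(\{6,\dots,n\})$ into the full $\Sym(n)$. For this I need a single generator that moves a point of $\{1,\dots,5\}$ to a point of $\{6,\dots,n\}$; once I have a permutation in $\langle\alpha_1,\dots,\alpha_m\rangle$ mixing the two blocks, the standard fact that two symmetric groups on overlapping-or-bridged supports generate the full symmetric group finishes the job (concretely, $\Sym(X)$ and $\Sym(Y)$ together with one transposition linking $X$ and $Y$ generate $\Sym(X\cup Y)$). The bridging transposition should come from the designed subsets in $S_\Delta$ and $S_{AB}/S_A$: for instance the subsets $\{a_2,b_3,a_1,b_3\}$, $\{a_4,b_5,a_1,b_5\}$ and $\{a_3,b_5,a_2,b_4\}$ of $S_\Delta$ show $\alpha_i$ is not purely supported on $\{1,\dots,5\}$ once one tracks how $\alpha_i$ acts there, and the blocks $S_{AC}=\{\{a_4,b_k,a_4,b_k\}\mid 6\le k\le 8\}$ and $S_{AB},S_A$ (with $5\le i\le 7$) tie points of $\{6,7,8\}$ to the higher range. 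The cleanest route is probably to exhibit one explicit element of $\langle\alpha_i\rangle$ that is a transposition $(p\ q)$ with $p\le 5 < q$, obtained by composing the known $\alpha_i$'s restricted across the boundary.

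The main obstacle I anticipate is bookkeeping rather than conceptual: because $S$ strictly contains $S_0$, the permutations $\alpha_i$ are only partially prescribed by $S_0$, and the undetermined entries (those pairs $\{a_i,b_k\}$ not covered by any set of $S_0$) could in principle interfere. The key point to verify carefully is that the \emph{generation} conclusion depends only on the prescribed part: since we are proving the generated group \emph{contains} $\Sym(n)$, any extra transpositions contributed by the unspecified part of $S$ can only enlarge the group, so they cannot obstruct the conclusion. Thus I would organize the proof as (i) extract $\Sym(\llbracket 6,n\rrbracket)$ from $S_{AR}$; (ii) extract $\Sym(\{1,\dots,5\})$ from $S_\Delta$; (iii) produce one bridging transposition from the designed overlap blocks; (iv) invoke the overlapping-symmetric-groups lemma to get $\Sym(n)$; and (v) run the mirror-image argument with $S_{BR}$, $S_\Delta$, $S_{BC}/S_{BA}$ to get $\Sym(m)$ for the $A$-tree action. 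The delicate step is (iii), making sure the chosen composition of $\alpha_i$'s genuinely lands a low point onto a high one using only subsets guaranteed to be in $S$ by $S_0$.
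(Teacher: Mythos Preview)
Your high-level strategy matches the paper's---obtain $\Sym(\llbracket1,5\rrbracket)\times\Sym(\llbracket6,n\rrbracket)$ and then bridge with a single transposition---but the implementation has two genuine gaps.

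First, in step~(i) you assert that $\langle\alpha_1,\alpha_2,\alpha_3\rangle$ contains $\Sym(\llbracket6,n\rrbracket)$ because $\alpha_i|_{\llbracket6,n\rrbracket}=\alpha_i'$. But $\alpha_1,\alpha_2,\alpha_3$ also act on $\llbracket1,5\rrbracket$ via $\alpha_i^\Delta$, so $\langle\alpha_1,\alpha_2,\alpha_3\rangle$ is a priori only a \emph{subdirect} subgroup of $\langle\alpha_1^\Delta,\alpha_2^\Delta,\alpha_3^\Delta\rangle\times\Sym(\llbracket6,n\rrbracket)$; a Goursat argument yields at best $\{1\}\times\Alt(\llbracket6,n\rrbracket)$, and whether one gets all of $\Sym$ depends on the parity of $\alpha_1'$, which varies with $n$. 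The same objection applies to step~(ii). The paper sidesteps this entirely by using $\alpha_5,\alpha_6,\alpha_7$: the blocks $S_{BR},S_{BC},S_{AB},S_A$ are arranged precisely so that $\alpha_{4+i}=\id\times\alpha_i'$ for $i=1,2,3$, whence $\langle\alpha_5,\alpha_6,\alpha_7\rangle=\{1\}\times\Sym(\llbracket6,n\rrbracket)$ on the nose; combining with $\alpha_1,\dots,\alpha_4$ then gives the full product $\Sym(\llbracket1,5\rrbracket)\times\Sym(\llbracket6,n\rrbracket)$.

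Second, and more seriously, step~(iii) cannot succeed with the generators you name. Every one of $\alpha_1,\dots,\alpha_7$ preserves the block decomposition $\llbracket1,5\rrbracket\sqcup\llbracket6,n\rrbracket$ (this is forced by $S_\Delta,S_{AR},S_{AC},S_{BR},S_{BC},S_{AB},S_A$), so no word in them can move a low point to a high one; the sets in $S_\Delta$ you cite merely mix points \emph{within} $\llbracket1,5\rrbracket$. The bridge in the paper is $\alpha_{m-1}$: the blocks $S_{BR},S_B,S_{C1},S_{C2}$ force $\alpha_{m-1}$ to fix every point of $\llbracket1,n\rrbracket$ except possibly $4,5,n{-}2$, and the element $\{a_{m-2},b_4,a_{m-1},b_{n-2}\}\in S_M$ forces $\alpha_{m-1}(4)=n{-}2$, so (by exclusion) $\alpha_{m-1}=(4\ \, n{-}2)$. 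This is exactly the purpose of the blocks $S_M,S_{C1},S_{C2}$, which your outline never invokes.
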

\begin{proof}
    We show the claim for the $B$-tree local action. The argument is similar for the $A$-tree local action.
    Let $\alpha_1, \dots, \alpha_{m}$ be the $B$-tree local involutions of $\Gamma$. 
    By Lemma \ref{lem: from structure set to local actions} and as $S_\Delta,S_{AR} \subset S$, 
    we get that for $1 \le i \le 3$, $\alpha_i = \alpha^\Delta_i \times \alpha'_i$ and that $\alpha_4 = \alpha^\Delta_4 \times \gamma_4$, where $\gamma_4$ is some unknown permutation in $\Sym(\llbracket6, n\rrbracket)$.
    
    Similarly, as  $S_{BR},S_{BC},S_{AB},S_A \subset S$
    we get that for $1 \le i \le 3$, $\alpha_{4 + i} = \id \times \alpha_i'$ where $\id$ is the identity permutation of $\Sym({\llbracket1,5\rrbracket})$.
    As $\alpha_1^\Delta, \dots, \alpha_4^\Delta$ generate $\Sym(\llbracket1,5\rrbracket)$ and $\alpha_1', \dots, \alpha_3'$ generate $\Sym(\llbracket6,n\rrbracket)$,
     $\alpha_1, \dots, \alpha_{7}$ generate a subgroup of $\Sym(n)$ containing 
    \[\Sym(\llbracket1,5\rrbracket) \times \Sym(\llbracket6, n\rrbracket)\]
    Finally, as $S_{BR},S_B,S_M,S_{C1},S_{C2} \subset S$, $\alpha_{m - 1}$ is the transposition $(4, n-2)$. From this we then conclude that $\alpha_1,\ldots,\alpha_m$ generate $\Sym(n)$.
\end{proof}

\begin{lemma}[Finite residual] \label{lem:finite_residual}
If a structure set $S$ contains $S_0$, then its associated involutive BMW group $\Gamma$
satisfies $\FR{\Gamma}=\Gamma^+$.
\end{lemma}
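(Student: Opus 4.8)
The plan is to prove the two inclusions $\FR{\Gamma}\subseteq \Gamma^+$ and $\Gamma^+\subseteq\FR{\Gamma}$ separately. The first is immediate: $\Gamma^+$ has index $4$ in $\Gamma$, so it is one of the finite-index subgroups whose intersection defines $\FR{\Gamma}$. For the reverse inclusion I would set $\overline{\Gamma}:=\Gamma/\FR{\Gamma}$, the maximal residually finite quotient of $\Gamma$, and write $\bar a_i,\bar b_k$ for the images of the generators. Recall that $\Gamma/\Gamma^+\cong\bbZ_2\times\bbZ_2$ via the type homomorphism sending every $a_i\mapsto(1,0)$ and every $b_k\mapsto(0,1)$. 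Hence $\Gamma^+\subseteq\FR{\Gamma}$ is equivalent to the assertion that $\overline{\Gamma}$ is a quotient of $\bbZ_2\times\bbZ_2$; concretely, it suffices to show that in $\overline\Gamma$ all $\bar a_i$ coincide with a single involution $\bar a$, all $\bar b_k$ coincide with a single involution $\bar b$, and $\bar a,\bar b$ commute.

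The seed comes from Radu's group $\Delta$. Because $S_\Delta\subseteq S_0\subseteq S$ and, by the table, the only element of $S$ containing a pair $\{a_i,b_k\}$ with $i\le 4$ and $k\le 5$ already lies in $S_\Delta$, the defining relations of $\Delta$ hold among the like-named generators of $\Gamma$; this yields a homomorphism $\iota\colon\Delta\to\Gamma$. Composing with $\Gamma\to\overline\Gamma$ gives a homomorphism $\phi$ from $\Delta$ into the residually finite group $\overline\Gamma$. A general fact — any homomorphism $\phi$ to a residually finite group satisfies $\phi(\FR{\Delta})\subseteq\FR{\phi(\Delta)}=1$, since the preimage of a finite-index subgroup is finite-index — then forces $\Delta^+=\FR{\Delta}$ into the kernel. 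As $\Delta/\Delta^+\cong\bbZ_2\times\bbZ_2$ identifies all $a$-generators and all $b$-generators, I conclude $\bar a_1=\cdots=\bar a_4=:\bar a$, $\bar b_1=\cdots=\bar b_5=:\bar b$, and $[\bar a,\bar b]=1$.

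It remains to propagate these identities to the remaining generators using the other pieces of $S_0$, read as relations in $\Gamma$ (hence in $\overline\Gamma$) via $R_S$. Two relation types drive the argument: a set $\{a_i,b_k,a_i,b_k\}$ forces $a_i$ and $b_k$ to commute, while a set $\{a_i,b_k,a_i,b_l\}$ gives $\bar b_l=\bar a_i\bar b_k\bar a_i$, so that if $\bar a_i$ commutes with $\bar b_k$ then $\bar b_l=\bar b_k$ (and symmetrically for the $a$'s). Thus $S_{AC}$ makes $\bar a$ commute with $\bar b_6,\bar b_7,\bar b_8$, and since the involutions $\alpha_1',\alpha_2',\alpha_3'$ act transitively on $\llbracket 6,n\rrbracket$, the sets $S_{AR}$ spread this along a connected graph to give $\bar b_k=\bar b_6$ and $[\bar a,\bar b_k]=1$ for all $k\ge 6$. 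Symmetrically, $S_{BC}$ and $S_{BR}$ (using transitivity of $\beta_1',\beta_2',\beta_3'$ on $\llbracket5,m\rrbracket$) give $\bar a_i=\bar a_5$ and $[\bar b,\bar a_i]=1$ for all $i\ge 5$, while $S_{AB}$ supplies the cross-commutation $[\bar a_5,\bar b_6]=1$. Finally the two connecting sets in $S_M$ close the loop: the relation from $\{a_4,b_n,a_m,b_{n-1}\}$, after using the commutations already established, collapses to $\bar a\,\bar a_5=1$, i.e. $\bar a_5=\bar a$; the relation from $\{a_{m-2},b_4,a_{m-1},b_{n-2}\}$ then collapses to $\bar b\,\bar b_6=1$, i.e. $\bar b_6=\bar b$. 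Combining, $\bar a_i=\bar a$ for all $i$ and $\bar b_k=\bar b$ for all $k$, so $\overline\Gamma=\langle\bar a,\bar b\rangle$ is a quotient of $\bbZ_2\times\bbZ_2$, giving $\Gamma^+\subseteq\FR{\Gamma}$ and hence equality.

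I expect the main obstacle to be the bookkeeping in this last propagation step rather than any single deep idea: one must check that the seeds produced by $S_{AC}$ and $S_{BC}$ fall in the correct index ranges, that transitivity of the $\alpha_i'$ and $\beta_k'$ genuinely connects every high index back to a seed, and, most delicately, that the final collapse via $S_M$ is non-circular — that is, that all commutations needed to reduce the two $S_M$ relations are in place beforehand, and that the first $S_M$ relation yields $\bar a_5=\bar a$ \emph{without} presupposing $\bar b_6=\bar b$. The design of $S_0$ is precisely what makes this closing step go through.
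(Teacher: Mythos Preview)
Your proof is correct and follows the paper's overall strategy: seed from Radu's $\FR{\Delta}=\Delta^+$ to identify $\bar a_1,\dots,\bar a_4$ and $\bar b_1,\dots,\bar b_5$, propagate along $S_{AR}$ and $S_{BR}$ to identify the high-index generators among themselves, and close the loop with the two sets in $S_M$. Your handling of the $\Delta$ step via ``a homomorphism into a residually finite group kills the finite residual'' is clean and does not require $\iota$ to be injective.

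The one genuine technical difference is in the propagation mechanism. You use $S_{AC}$ (resp.\ $S_{BC}$) to seed the commutation $[\bar a,\bar b_6]=1$ (resp.\ $[\bar a_5,\bar b]=1$) and then spread it along the transitive action of $\langle\alpha_1',\alpha_2',\alpha_3'\rangle$ via $S_{AR}$. The paper instead never invokes $S_{AC}$, $S_{BC}$ or $S_{AB}$ for this step: it observes that the Schreier graph of $\alpha_1',\alpha_2',\alpha_3'$ on $\llbracket 6,n\rrbracket$ is connected and \emph{non-bipartite} (primitivity of $\Sym(\llbracket 6,n\rrbracket)$, or simply the fixed points of the $\alpha_i'$, give odd cycles), so any two vertices are joined by an even-length path; since $\bar a^2=1$, conjugating $\bar b_k$ by $\bar a$ an even number of times returns $\bar b_k$, giving $\bar b_k=\bar b_l$ directly. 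Your route is equally valid and makes explicit why $S_{AC}$ and $S_{BC}$ were built into $S_0$; the paper's route is slightly more economical. (Your mention of $S_{AB}$ for $[\bar a_5,\bar b_6]=1$ is correct but, as you may have noticed, is not actually used in your closing $S_M$ step.)
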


\begin{proof}
Clearly $\FR{\Gamma} \le \Gamma^+$, thus it suffices to show that $\FR{\Gamma}$ has index $4$ in $\Gamma$. More precisely, we show that $\Gamma/\FR{\Gamma} \simeq \bbZ/2 \times \bbZ/2$.

Recall from Proposition \ref{prop:structure_set_bmw_bijection} that $\Gamma$ has a presentation with generators $A_m\sqcup B_n$, so we may identify $A_m$ and $B_n$ with elements of $\Gamma$.
For $g\in \Gamma$ denote by $\bar{g}$ its image in $\Gamma/\FR{\Gamma}$. 
By \cite[Proposition 4.2 (vii)]{Caprace-survey} and as $S_\Delta \subset S$, $\Gamma$ contains the 
subgroup $\Delta$.
By \cite[Theorem 5.5]{radu20newlattices} stated above, $\Delta$ satisfies $\Delta/\FR{\Delta} \simeq \bbZ/2 \times \bbZ/2$. It follows that $\bar{a}_i = \bar{a}_j$ for all $i,j \le 4$ and $\bar{b}_k = \bar{b}_l$ for all $k,l \le 5$. Denote these elements by $\bar{a},\bar{b} \in \Gamma/\FR{\Gamma}$ respectively.

In claim 2 below, we prove that $\bar{b}_i = \bar b$ for all $1 \le i \le n$ and $\bar{a}_i = \bar{a}$ for all $1 \le i \le m$. The lemma follows from this as $\Gamma/\FR{\Gamma}$ is generated by $\bar{a},\bar{b}$ and satisfies the relations $\bar{a}^2=\bar{b}^2=[\bar{a},\bar{b}]=1$. Hence, it is isomorphic to $\bbZ/2\times \bbZ/2$.

We first prove the following claim, showing that certain generators of $\Gamma$ are equal in the quotient.

\medskip 

\textbf{Claim 1: $\bar{a}_i = \bar a_j$ for all $i, j \geq 5$, and $\bar{b}_k = \bar b_l$ for all $k,l \geq 6$}

We show that $\bar{b}_k = \bar b_l$ for all $k,l \geq 6$. The second claim follows from a similar argument.

Let $G$ be the Schreier graph of the action of $\alpha'_1,\alpha'_2,\alpha'_3$ on $\llbracket 6,n\rrbracket$, that is, the graph with vertices $\llbracket 6, n\rrbracket$ and an edge $(k, \alpha'_i(k))$ for every $i \in \{1,2,3\}$ and $k\in \llbracket 6, n\rrbracket$. 
The graph $G$ is connected and not bipartite as otherwise, $\Sym(\llbracket 6,n\rrbracket)=\gen{\alpha'_1,\alpha'_2,\alpha'_3}$ would have an imprimitive action on $\llbracket 6,n\rrbracket$, giving a contradiction.

Let $(k,l)$ be an edge of $G$. We have that $\alpha'_i(k) = l$ for some $i \in \{1,2,3\}$.
Consequently, $\{a_i,b_k,a_i,b_l\}\in S_{AR}$ and $a_i b_k a_i b_l \in R_S$ is a relation in $\Gamma$. 
In particular, $b_l = a_i b_k a_i$ (as generators are involutions).
Since $G$ is not bipartite and is connected, any two vertices of $G$ are connected by an even length path.
It follows that for \textit{any} $k$ and $l$ such that $6\le  k < l \le n$,
there is an even number $p$ such that $\bar b_l = \bar a^p \bar b_k \bar a^p$. 
As $\bar{a}^2=1$, we deduce that $\bar{b}_k = \bar b_l$.
The claim follows.

\medskip 

\textbf{Claim 2: $\bar a = \bar a_i$ for all $1 \le i \le m$ 
, and $\bar b = \bar b_k$ for all $1 \le k \le n$}

By Claim 1, we can define $\bar a' := \bar a_i$ for all $i\geq 5$ and $\bar b' := \bar b_k$ for all $k\geq 6$. 
To prove Claim 2, we need to show that $\bar a = \bar a'$ and $\bar b = \bar b'$.
We show $\bar a = \bar a'$. The second statement follows from a similar argument.

First note that since $S_{AR}\subset S$ the word $a_1 b_{k} a_1 b_{\alpha_1'(k)}\in R_S$ is a relation in $\Gamma$ for some $6\leq k, \alpha_1'(k) < n - 2$.
In the quotient, this relation becomes $\bar{a}\bar{b}'\bar{a}\bar{b}'=1$ which implies that $\bar a$ commutes with $\bar b'$.
Next, since $S_M \subset S$ the word $a_{4}b_{n}a_{m}b_{n-1}\in R_S$ is a relation in $\Gamma$. In the quotient, this gives $\bar a \bar b' \bar a' \bar b'=1$.
As $\bar b'$ commutes with $\bar a$, we see that $\bar a = \bar a'$. The claim follows.
\end{proof}

We are now ready to prove Theorem~\ref{thm: containing P implies simple of index 4}.
\begin{proof}[Proof of Theorem \ref{thm: containing P implies simple of index 4}]
By Lemma~\ref{lem:generation}, the local actions of the group $\Gamma$ are the full symmetric groups on $m$ and $n$ elements. 
Moreover, $\Gamma$ is irreducible and not residually finite as it contains $\Delta$ (see \cite[Proposition 4.2 vii)]{Caprace-survey}).
The theorem then follows from Theorem \ref{thm: BM just infinite}, Lemma \ref{lem: simple finite residual} and Lemma~\ref{lem:finite_residual}.
\end{proof}

\begin{lemma}\label{lem: lower bound on labelled involutive BMW}
There exists a number $\alpha>0$ such that, for all integers $m >0$ and $n>0$, the number of $(m,n)$-structure sets is at least $ (mn)^{\alpha mn}$.
\end{lemma}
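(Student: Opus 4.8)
The plan is to produce a lower bound on $|\calS_{m,n}|$ by counting directly the number of ways to build an $(m,n)$-structure set, using the bipartite-graph viewpoint of Remark~\ref{rmk: bipartite graphs}. Recall that an $(m,n)$-structure set is a partition of the $mn$ edges of the complete bipartite graph $K_{m,n}$ into closed paths of length $4$, where each such path pairs up two edges sharing a common endpoint. Equivalently, via the map $S\mapsto f_S$ used in Proposition~\ref{prop:bmw_upper_count}, a structure set corresponds to a function $f_S:A_m\times B_n\to A_m\times B_n$, so it suffices to count how many such functions arise from genuine structure sets. Rather than characterize the image of $S\mapsto f_S$ exactly, I would construct a large explicit family of structure sets and count it from below.

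First I would exploit condition (2) of Definition~\ref{def: structure set}: for each pair $(a_i,b_k)$ there is exactly one element of $S$ containing both. A clean way to generate many structure sets is to fix, for each $a_i\in A_m$, an involution $\alpha_i\in\Sym(B_n)$ (the $B$-tree local involution, by Lemma~\ref{lem: from structure set to local actions}) and, for each $b_k\in B_n$, an involution $\beta_k\in\Sym(A_m)$, subject to a compatibility condition guaranteeing that the length-$4$ paths close up consistently. Concretely, the element of $S$ containing $\{a_i,b_k\}$ is forced to be $\{a_i,b_k,a_j,b_l\}$ with $l=\alpha_i(k)$ and $j=\beta_k(i)$, and closing the square requires the symmetric conditions $\alpha_j(l)=k$ and $\beta_l(j)=i$. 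The main step is therefore to isolate a sub-family of the data $(\alpha_i,\beta_k)$ that automatically satisfies these closure constraints while still being numerous.

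The cleanest such sub-family to count is the ``pure matching'' construction: partition $A_m\times B_n$ into $mn/2$ pairs, each pair being glued into a degenerate square. For instance, for each fixed $a_i$ I would let $\alpha_i$ be an arbitrary involution of $B_n$; this alone, with all $\beta_k$ trivial, does not close up, so instead I would choose a single global involution structure that is manifestly consistent. A convenient choice is to treat the two ``coordinates'' independently: split $A_m$ into $m/2$ pairs and $B_n$ into $n/2$ pairs, and for each pair $\{a_i,a_j\}$ and each $b_k$ record a square $\{a_i,b_k,a_j,b_k\}$, which trivially satisfies all closure conditions. The number of free binary choices available here scales like a constant times $mn$, yielding at least $2^{c\,mn}$ structure sets for some $c>0$; since $2^{c\,mn}\ge (mn)^{\alpha mn}$ fails for this naive count, I would instead enrich the construction so that each square's labels range over $\Theta(mn)$ possibilities. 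The right enrichment is to note that for a fixed pairing of the $a$-coordinate, the value $\alpha_i(k)=l$ may be chosen from roughly $n$ options per $(i,k)$, and $\beta_k(i)=j$ from roughly $m$ options, giving on the order of $(mn)^{\Theta(mn)}$ admissible structure sets once the closure conditions are imposed on disjoint blocks so they do not interfere.

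The step I expect to be the genuine obstacle is making the counting rigorous while respecting the closure constraints: a careless construction will either over-count (assigning the same square to $\{a_i,b_k\}$ twice) or fail condition (2). To handle this cleanly I would organize the $mn$ cells of $A_m\times B_n$ into independent blocks on which the square labels can be chosen freely, verify that condition~(2) holds within and across blocks, and then multiply the per-block counts. Getting the right exponent $\alpha$ is then a routine logarithm estimate: if each of $\Theta(mn)$ independent choices has $\Theta(mn)$ options, then $|\calS_{m,n}|\ge (mn)^{\alpha mn}$ for a suitable $\alpha>0$, uniformly in $m,n$, which is exactly the claimed bound. The delicate bookkeeping is entirely in the block decomposition; the asymptotic estimate itself is immediate once the block count is established.
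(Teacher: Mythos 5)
There is a genuine gap: you never actually exhibit a family of $(mn)^{\alpha mn}$ structure sets. The ``block decomposition'' that is supposed to carry the whole argument is left unspecified, and the quantitative input that would make any such count work --- a super-exponential lower bound on the number of available choices per row or per block --- is never invoked. Worse, you dismiss the one construction that does the job on incorrect grounds. Taking, for each $a_i$, an arbitrary involution $\alpha_i\in\Sym(B_n)$ (fixed points allowed) and setting $S=\{\{a_i,b_k,a_i,b_{\alpha_i(k)}\}\mid 1\le i\le m,\ 1\le k\le n\}$, with all $\beta_k$ trivial, \emph{does} close up: the element of $S$ containing $\{a_i,b_k\}$ is $\{a_i,b_k,a_i,b_{\alpha_i(k)}\}$, the closure condition $\alpha_i(\alpha_i(k))=k$ holds precisely because $\alpha_i$ is an involution, and condition (2) of Definition~\ref{def: structure set} is immediate. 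Distinct tuples $(\alpha_1,\dots,\alpha_m)$ give distinct structure sets, so $\lvert\calS_{m,n}\rvert\ge\lvert\calI_n\rvert^m$, where $\calI_n$ is the set of all involutions in $\Sym(n)$. This is exactly the paper's proof.

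The second missing ingredient is the count of involutions. The paper uses $\lvert\calI_n\rvert\ge n^{n/4}$ for large $n$ (from the asymptotics $\lvert\calI_n\rvert\sim\exp(\tfrac12 n(\log n-1)+\sqrt n)$), which together with the normalization $m\le n$ gives $\lvert\calI_n\rvert^m\ge n^{mn/4}\ge(mn)^{mn/8}$, and small $m,n$ are absorbed by shrinking $\alpha$. Your heuristic that ``each of $\Theta(mn)$ independent choices has $\Theta(mn)$ options'' is the right shape of estimate, but nothing in your construction delivers it: the ``pure matching'' variant (pairing up the $a$'s and taking squares $\{a_i,b_k,a_j,b_k\}$) gives only exponentially many structure sets, as you note, and the proposed enrichment is never specified concretely enough to check either condition (2) or the closure constraints. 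Note also that a non-degenerate square $\{a_i,b_k,a_j,b_l\}$ covers \emph{four} cells of $A_m\times B_n$ (namely $\{a_i,b_k\}$, $\{a_i,b_l\}$, $\{a_j,b_k\}$, $\{a_j,b_l\}$), not two, so the bookkeeping for a generic block construction is more delicate than your sketch suggests; the row-wise involution construction sidesteps this entirely by using only degenerate squares.
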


\begin{proof}
Without loss of generality assume that $m\le n$. Let $\calI_n \subseteq \Sym(n)$ be the subset of all involutions. For any $m$ involutions, $\alpha_1,\ldots,\alpha_m \in \calI_n$ we can define a structure set $$S = \{\{a_i, b_k, a_i, b_{\alpha_i(k)}\}\mid  1\le i\le m, 1\le k \le n\}.$$
Therefore, there are at least $|(\calI_n)^m|$ different  $(m,n)$-structure sets.
By \cite[Theorem 8]{chowla1951recursions},
the number of involutions in $\Sym(n)$ is $$ |\calI_n| \sim \exp(\tfrac{1}{2}n(\log n -1) + \sqrt{n}) \ge n^{\tfrac{1}{4} n}$$
for large $n$.
Thus the number of structure sets of degree $(m,n)$ is at least $$|(\calI_n)^m| \ge n^{\tfrac 14 mn} \ge (n^2)^{\tfrac 18 mn} \ge (mn)^{\tfrac 18 mn}$$
where the last inequality follows from $m\le n$. 
By choosing $\alpha$ small enough, we get the claim for all $m$ and $n$ (not just large enough $n$).
\end{proof}

\begin{theorem} \label{thm: count}
There exists a number $\alpha>0$ such that, for all sufficiently large natural numbers $m$ and $n$, there are at least  $(m n)^{\alpha m n}$ pairwise non-commensurable, involutive BMW groups $\Gamma$ of degree $(m,n)$ such that $\FR{\Gamma} = \Gamma^+$ is simple.
\end{theorem}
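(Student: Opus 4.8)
The plan is to manufacture the groups by completing the partial structure set $S_0$ in many essentially different ways, and then to control how many of the resulting groups can share a single commensurability class. Let $\mathcal{F}\subseteq\calS_{m,n}$ denote the set of $(m,n)$-structure sets that contain $S_0$. By Theorem~\ref{thm: containing P implies simple of index 4} and Lemma~\ref{lem:finite_residual}, every $S\in\mathcal{F}$ has an associated involutive BMW group $\Gamma_S$ with $\FR{\Gamma_S}=\Gamma_S^+$ simple, and by Lemma~\ref{lem:generation} its local actions are the full (hence primitive) symmetric groups $\Sym(m)$ and $\Sym(n)$. Thus every $\Gamma_S$ already has exactly the internal structure demanded by the theorem; the only task is to extract $(mn)^{\alpha mn}$ pairwise non-commensurable ones from this family.

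First I would bound $|\mathcal{F}|$ from below. Inspecting the definition of $S_0$ (equivalently Table~\ref{table:partial_ss_new}), the sub-grid indexed by $\{a_{11},\dots,a_{m-3}\}\times\{b_{12},\dots,b_{n-3}\}$ is disjoint from every pair $\{a,b\}$ occurring in a set of $S_0$. Hence, exactly as in the proof of Lemma~\ref{lem: lower bound on labelled involutive BMW}, for each $11\le i\le m-3$ I may freely choose an involution $\alpha_i\in\calI_{n-14}$ of $\{12,\dots,n-3\}$ and adjoin the sets $\{a_i,b_k,a_i,b_{\alpha_i(k)}\}$; these cover precisely the free pairs and never collide with $S_0$. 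Any pair $\{a,b\}$ that remains uncovered is then absorbed by the degenerate set $\{a,b,a,b\}$, so each choice of the family $(\alpha_i)_{i=11}^{m-3}$ extends to a genuine structure set containing $S_0$, and distinct families give distinct structure sets. Using the involution asymptotics cited in Lemma~\ref{lem: lower bound on labelled involutive BMW}, this gives $|\mathcal{F}|\ge|\calI_{n-14}|^{\,m-13}\ge (mn)^{\alpha' mn}$ for some $\alpha'>0$ and all large $m,n$.

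Next I would pass from structure sets to commensurability classes. Under Proposition~\ref{prop:structure_set_bmw_bijection} and the relabeling description, two elements of $\mathcal{F}$ yield conjugate BMW groups only if they differ by a relabeling, so the $|\mathcal{F}|$ structure sets determine at least $|\mathcal{F}|/(m!\,n!)$ conjugacy classes in $\BMW(m,n)$, each with finite residual equal to its (simple) type-preserving subgroup and with primitive local actions. Since every such group meets the hypotheses of Proposition~\ref{prop: upper bound on commensurable BMWs}, each abstract commensurability class contains at most $2(n!\,m!)^2$ of these conjugacy classes. Dividing, the number of commensurability classes produced is at least
\[\frac{|\mathcal{F}|}{m!\,n!\cdot 2(n!\,m!)^2}\ \ge\ \frac{(mn)^{\alpha' mn}}{2\,(m!\,n!)^3}\ \ge\ (mn)^{\alpha mn}\]
for a suitable $0<\alpha<\alpha'$ and all large $m,n$, since $(m!\,n!)^3\le (mn)^{6\max(m,n)}$ is negligible in the exponent against $(mn)^{\alpha' mn}$. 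Picking one representative $\Gamma$ from each class yields the required pairwise non-commensurable involutive BMW groups, each satisfying $\FR{\Gamma}=\Gamma^+$ simple.

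The routine parts are the two counting estimates; the step that genuinely needs care is the first. One must verify that the designated sub-grid is truly untouched by \emph{every} component of $S_0$ (so that the free involutions can be adjoined without violating the at-most-once condition) and that the remaining cells can always be completed to a full structure set. This is the crux, because it is what guarantees $|\mathcal{F}|$ matches the unconstrained count of Lemma~\ref{lem: lower bound on labelled involutive BMW} up to a harmless loss in the exponent; once it is established, the reduction to commensurability classes through Proposition~\ref{prop: upper bound on commensurable BMWs} is straightforward bookkeeping.
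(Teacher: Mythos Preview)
Your proposal is correct and follows essentially the same route as the paper: exploit the free sub-grid $\llbracket 11,m-3\rrbracket\times\llbracket 12,n-3\rrbracket$ to produce at least $(m'n')^{\alpha' m'n'}$ completions of $S_0$ via Lemma~\ref{lem: lower bound on labelled involutive BMW}, fill in the remaining pairs by degenerate sets $\{a,b,a,b\}$, and then divide by the commensurability-class bound from Proposition~\ref{prop: upper bound on commensurable BMWs}. The only minor deviation is your intermediate division by $m!\,n!$ to pass from structure sets to conjugacy classes before invoking Proposition~\ref{prop: upper bound on commensurable BMWs}; the paper skips this because the $2(m!n!)^2$ already counts labeled $(m,n)$-complexes (via Lemma~\ref{lem:square_iso}), hence structure sets directly, but your extra factor is harmless in the final estimate.
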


\begin{proof}
Note that the partial structure set $S_0$ has no element containing $\{a_i, b_k\}$ where $i\in \llbracket 11,m-3\rrbracket$ and $k\in \llbracket 12, n-3\rrbracket$.
Set $m',n'$ to be the number of elements in those integer intervals respectively -- namely $m'=m-13$ and $n'=n-14$. 
We see that we can add to $S_0$ any partial structure set supported on those elements.
Using Lemma \ref{lem: lower bound on labelled involutive BMW}, there is some $\alpha'>0$ so that there are at least $(m'n')^{\alpha' m'n'}$ different ways of extending $S_0$ to a partial structure set $S'$. By further adding the sets $\{a_i,b_j,a_i,b_j\}$ to $S'$ for any $i,j$ such that $\{a_i, b_j\}$ is not contained in an element of $S'$,
one obtains a structure set $S$. 
Therefore there are at least $(m'n')^{\alpha' m'n'}$ structure sets containing $S_0$. Additionally, a BMW group $\Gamma$ associated to such a structure set satisfies that $\FR{\Gamma} = \Gamma^+$ is simple by Theorem~\ref{thm: containing P implies simple of index 4}.

We now give a lower bound for the number of commensurability classes of involutive BMWs associated to structure sets containing $S_0$.
By Proposition~\ref{prop: upper bound on commensurable BMWs} the commensurability class of such a BMW group has at most $2(m!n!)^2$ structure sets. 
Therefore, by the previous paragraph, there are at least $(m'n')^{\alpha'm'n'} / 2(m!n!)^2$ commensurability classes of such involutive BMW groups of degree $(m,n)$.
By using $m! \le m^m$ and $n! \le n^n$ and $m'\ge m/2$ and $n'\ge n/2$ one gets the desired lower bound.
\end{proof}

\section{A Random Model for BMW groups}\label{sec:random_model}
\label{sec: random model}
For each even $n\in \bbN$, let $\FPF_{n}\subseteq \Sym(n)$ be the subset of \fpf involutions, i.e., involutions which do not fix any element.
When we write $\FPF_n$, it is implied that $n$ is even.
Let $(\FPF_n)^m$ be the set of $m$-tuples of \fpf involutions. 
We say that $\ualpha = (\alpha_1, \dots, \alpha_m) \in (\FPF_n)^m$ has \emph{triple matchings} if $\alpha_i(k) = \alpha_j(k) = \alpha_p(k)$ for some $1 \le k \le n$ and some distinct $1 \le i < j < p \le m$.

Fix $\ualpha = (\alpha_1, \dots, \alpha_m) \in (\FPF_n)^m$ with no triple matchings. 
We show how to 
canonically define an $(m,n)$-structure set $S_{\ualpha}$ with associated $B$-tree local involutions $\alpha_1, \dots, \alpha_m$.
After fixing a marking $\mathcal M$ of $T_m \times T_n$, by Proposition~\ref{prop:structure_set_bmw_bijection} this also defines (up to conjugacy) an involutive BMW group $\Gamma_{\ualpha} \in \BMW_{\mathcal M}(m,n)$ with structure set~$S_{\ualpha}$.

For each $1 \le k < l \le n$, set $I_{k,l} := \{i~|~\alpha_i(k) = l\}$. 
Note that since $\ualpha$ has no triple matchings, $|I_{k,l}|\le 2$. 
The structure set $S_{\ualpha}$ is the collection of subsets $\{a_i,b_k,a_j,b_l\}$ such that $1\le i \le j\le m$ and $1\le k< l\le n$ satisfy $I_{k,l}=\{i,j\}$ 
(note that $i$ could equal $j$).
It is straightforward to check that $S_{\ualpha}$
is indeed a structure set and that 
the $B$-tree local involutions of $S_{\ualpha}$ are exactly $\alpha_1,\ldots,\alpha_m$.

\begin{example} \label{ex: bmw_group}
    Suppose that $\ualpha = (\alpha_1, \alpha_2, \alpha_3) \in (\FPF_6)^3$ is such that $\alpha_1 = (12)(34)(56)$, $\alpha_2 = (12)(35)(46)$ and $\alpha_3 = (16)(35)(24)$. Note that $\ualpha$ has no triple matchings. The structure set associated to $\ualpha$ is then
    \begin{align*}
    S_{\ualpha} = \{ &\{a_1,b_1,a_2,b_2\}, \{a_1,b_3,a_1,b_4\}, \{a_1, b_5, a_1, b_6\}, \{a_2, b_3, a_3, b_5\}, \\
    &\{a_2, b_4, a_2, b_6\}, \{a_3, b_1, a_3, b_6\}, \{a_3, b_2, a_3, b_4\}.\}
    \end{align*}
    See Figure \ref{fig:bipartite_example}.
    \begin{figure}
        \centering
        \includegraphics{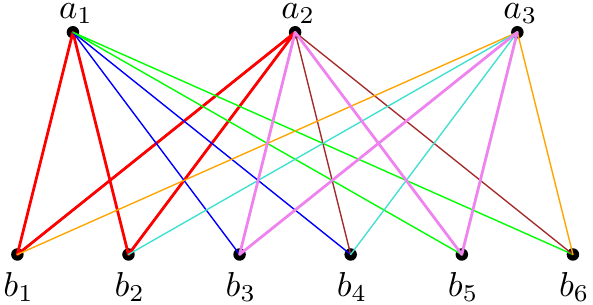}
        \caption{The partition of the edges of the bipartite graph $K_{3,6}$ corresponding to Example \ref{ex: bmw_group}}
        \label{fig:bipartite_example}
    \end{figure}
\end{example}

A \emph{random element of $\FPF_n$} is an element of $\FPF_n$ chosen uniformly at random. A \emph{random element of $(\FPF_n)^m$} is an element of $(\FPF_n)^m$ chosen uniformly at random, i.e., an $m$-tuple of $m$ independently chosen, random elements of $\FPF_n$.
We are now ready to define random involutive BMW groups:
\begin{definition}
    Suppose a marking $\mathcal M$ for $T_m \times T_n$ is fixed.
    Let $n>0$ be even and let $\ualpha$
    be a random element of $(\FPF_n)^m$.
    If $\ualpha$ has no triple matchings, we define the corresponding \emph{random involutive BMW group of degree $(m,n)$} to be $\Gamma_{\ualpha} \in \BMW_{\mathcal M}(m,n)$.
    On the other hand, if $\ualpha$ contains a triple matching, then we say that the corresponding random involutive BMW group is not defined.
\end{definition}

Let $\calP$ be a property of BMW groups. We say that a \emph{random involutive BMW group of degree $(m,n)$ satisfies property $\calP$
with probability $p$}, if given a 
random
$\ualpha = (\alpha_1, \dots, \alpha_m) \in (\FPF_n)^m$,
then with probability $p$ the corresponding random involutive BMW group is defined and satisfies property~$\calP$.

We will see in Lemma~\ref{lem:triple_matchings}, 
if $m$ is a function of $n$ satisfying $m(n) = o(n^{\tfrac13})$,
 then a
 random
 $m-$tuple $\ualpha  \in (\FPF_n)^{m(n)}$ 
 has no triple matchings (and consequently defines a random BMW group) with probability tending to $1$ as $n$ tend to infinity.

We first prove two elementary lemmas regarding fixed-point-free involutions.
Recall that the \emph{double factorial} of an integer $n$ is defined as $n!! := n\cdot (n-2)\cdot (n-4) \dots 2$ for $n$ even and as $n!! := n\cdot(n-2)\cdot(n-4)  \dots  3\cdot 1$ for $n$ odd.

\begin{lemma}\label{lem:count_fpf}
	For $n$ even, $\lvert \FPF_{n}\rvert=(n-1)!!$. 
\end{lemma}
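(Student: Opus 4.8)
The plan is to count fixed-point-free involutions in $\Sym(n)$ for even $n$ directly by a combinatorial recursion or a direct matching argument. A fixed-point-free involution is precisely an involution all of whose cycles are transpositions, with no fixed points; equivalently, it is a perfect matching on the set $\{1,\dots,n\}$, since such an involution pairs up all $n$ elements into $n/2$ disjoint transpositions. So the claim $\lvert\FPF_n\rvert=(n-1)!!$ reduces to counting perfect matchings on $n$ elements, and the plan is to establish that count.

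The cleanest approach I would take is the standard ``pair off the first element'' argument. Fix the element $1$. In any fixed-point-free involution, $1$ must be matched to exactly one of the remaining $n-1$ elements, giving $n-1$ choices. After removing $1$ and its partner, what remains is a fixed-point-free involution on the remaining $n-2$ elements. This yields the recursion
\[
\lvert\FPF_n\rvert=(n-1)\cdot\lvert\FPF_{n-2}\rvert,
\]
with base case $\lvert\FPF_0\rvert=1$ (the empty involution) or $\lvert\FPF_2\rvert=1$. Unwinding the recursion gives $\lvert\FPF_n\rvert=(n-1)(n-3)\cdots 3\cdot 1=(n-1)!!$, which is exactly the claimed double factorial since $n$ is even.

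Alternatively, and perhaps more transparently, one can give a direct formula argument: the number of ways to partition an $n$-element set into $n/2$ unordered pairs is
\[
\frac{n!}{2^{n/2}\,(n/2)!},
\]
where the $n!$ counts ordered arrangements, the $2^{n/2}$ corrects for the internal order within each pair, and the $(n/2)!$ corrects for the order among the pairs. A short computation shows this equals $(n-1)!!$. I would likely present the recursion version, since it is self-contained and requires no simplification of factorials.

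There is no substantial obstacle here; the only thing to be careful about is the bijection between fixed-point-free involutions and perfect matchings, namely that the transposition cycle structure is forced precisely by the conditions ``involution'' (order dividing $2$, so all nontrivial cycles are $2$-cycles) and ``fixed-point-free'' (no $1$-cycles), so that the cycles exactly constitute a perfect matching. Once this identification is made explicit, the count follows immediately from either the recursion or the direct formula.
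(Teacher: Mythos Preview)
Your proof is correct and follows essentially the same approach as the paper: the paper also fixes the element $1$, notes there are $n-1$ choices for its image, and deduces the recursion $\phi(n)=(n-1)\phi(n-2)$ with $\phi(2)=1$, yielding $(n-1)!!$. Your alternative direct formula $\frac{n!}{2^{n/2}(n/2)!}$ is not used in the paper's proof of this lemma, though it does appear later in Section~\ref{sec:btree_local}.
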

\begin{proof}
	Let $\phi(n) = |\FPF_n|$, and let $\sigma \in \FPF_n$.
	There are $n-1$ options for $\sigma(1)$. After choosing $\sigma(1)$, there are $\phi(n-2)$ ways of completing $\sigma$ to a fixed-point-free involution. So we get the recursion formula $\phi(n)=(n-1)\phi(n-2)$, with $\phi(2)=1$. This gives that $\phi(n)=(n-1)!!$.
\end{proof}

\begin{lemma} \label{lem:containing_transpositions}
	For $n$ even, let $O_1,\dots, O_k$ be a collection of unordered pairs of distinct elements 
	in $\{1,\dots, n\}$ such that $O_i \cap O_j = \emptyset$ for $i \neq j$.  The probability that 
	a  
	random element
	of $\FPF_{n}$ contains the orbit $O_i$ for all $1 \le i \le k$
    is $\frac{(n-2k - 1)!!}{(n-1)!!}$.
\end{lemma}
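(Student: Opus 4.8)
The plan is to compute the desired probability as a ratio of counts: the number of fixed-point-free involutions containing all the prescribed orbits $O_1, \dots, O_k$, divided by the total number $|\FPF_n| = (n-1)!!$ given by Lemma~\ref{lem:count_fpf}. So the core of the argument is to count the fixed-point-free involutions $\sigma \in \FPF_n$ whose cycle decomposition includes each transposition $O_i = (x_i\; y_i)$.

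First I would observe that the hypothesis $O_i \cap O_j = \emptyset$ for $i \neq j$ is exactly what makes this counting clean: the $k$ prescribed transpositions involve $2k$ distinct elements of $\{1, \dots, n\}$, and no conflict can arise between them. An involution $\sigma$ containing all the $O_i$ is therefore determined by freely choosing a fixed-point-free involution on the remaining $n - 2k$ elements (which is possible precisely because $n$ is even and $n - 2k$ is then also even). Hence the number of such $\sigma$ equals $|\FPF_{n-2k}|$, which by Lemma~\ref{lem:count_fpf} is $(n-2k-1)!!$.

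Combining these two counts, the probability that a uniformly random element of $\FPF_n$ contains every $O_i$ is
\[
\frac{|\FPF_{n-2k}|}{|\FPF_n|} = \frac{(n-2k-1)!!}{(n-1)!!},
\]
as claimed. I would note the mild edge cases to keep the statement well-posed: if $2k = n$ the remaining set is empty and $|\FPF_0| = 1 = (-1)!!$ by the usual convention, giving a single involution, while $2k > n$ is impossible since the $O_i$ are disjoint pairs inside an $n$-element set.

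The main obstacle here is essentially bookkeeping rather than any genuine difficulty: the one point that requires care is justifying that containing the prescribed orbits imposes no further constraint beyond fixing those $2k$ elements pairwise, so that the completion on the complementary set is an arbitrary fixed-point-free involution. This follows immediately from disjointness, since each $O_i$ is a full $2$-cycle of $\sigma$ and the remaining elements are permuted among themselves. Thus the bijection between $\{\sigma \in \FPF_n : O_i \subseteq \sigma \text{ for all } i\}$ and $\FPF_{n-2k}$ is transparent, and the probability computation is a direct ratio.
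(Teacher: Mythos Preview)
Your proof is correct and follows essentially the same approach as the paper: both count the fixed-point-free involutions containing all the $O_i$ as $(n-2k-1)!!$ via Lemma~\ref{lem:count_fpf} applied to the complementary $n-2k$ elements, and divide by $|\FPF_n|=(n-1)!!$. Your write-up is somewhat more detailed about the bijection and edge cases, but the argument is the same.
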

\begin{proof}
	By Lemma \ref{lem:count_fpf}, there are $(n-1)!!$ fixed-point-free involutions in $S_{n}$,  $(n-2k-1)!!$ of which have $O_i$ as an orbit for every $i$. 
\end{proof}  

The next lemma shows that when $n$ is sufficiently greater than $m$,  there are no triple matchings with high probability.
\begin{lemma}\label{lem:triple_matchings}
	 A 
	 random
	 $\ualpha\in(\FPF_{n})^{m}$
	 has no triple matchings with probability at least $1-\frac{4m^3}{n}$.
\end{lemma}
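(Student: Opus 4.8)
The plan is to bound the probability of the complementary event---that $\ualpha$ has \emph{some} triple matching---using a union bound over all possible configurations, and then show this bound is at most $\frac{4m^3}{n}$. Recall that $\ualpha$ has a triple matching precisely when there exist distinct indices $1 \le i < j < p \le m$ and some $k$ with $\alpha_i(k) = \alpha_j(k) = \alpha_p(k) =: l$. For a fixed such configuration, this means that all three of $\alpha_i, \alpha_j, \alpha_p$ contain the common transposition $(k\ l)$.

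First I would fix the three indices $i < j < p$ and the unordered pair $\{k,l\}$, and compute the probability that each of the three chosen involutions contains $\{k,l\}$ as an orbit. Since the three involutions are chosen independently, this probability factors as the cube of the single-involution probability. By Lemma~\ref{lem:containing_transpositions} with $k=1$ (a single orbit), the probability that one random element of $\FPF_n$ contains the orbit $\{k,l\}$ is $\frac{(n-3)!!}{(n-1)!!} = \frac{1}{n-1}$. Hence the probability that all three contain $\{k,l\}$ is $\left(\frac{1}{n-1}\right)^3$.

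Next I would apply the union bound. There are $\binom{m}{3}$ ways to choose the index triple $\{i,j,p\}$ and $\binom{n}{2}$ ways to choose the unordered pair $\{k,l\}$. Summing the per-configuration probability over all these choices gives
\[
\Pr[\text{triple matching}] \le \binom{m}{3}\binom{n}{2}\left(\frac{1}{n-1}\right)^3.
\]
Now it remains to bound this expression by $\frac{4m^3}{n}$. Using $\binom{m}{3} \le \frac{m^3}{6}$ and $\binom{n}{2} = \frac{n(n-1)}{2}$, the right-hand side is at most $\frac{m^3}{6}\cdot \frac{n(n-1)}{2}\cdot \frac{1}{(n-1)^3} = \frac{m^3 n}{12 (n-1)^2}$, and for $n \ge 2$ one has $\frac{n}{(n-1)^2} \le \frac{4}{n}$ (equivalently $n^2 \le 4(n-1)^2$, which holds for $n\ge 2$), yielding the claimed bound $\frac{4m^3}{n}$ with room to spare.

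The only genuinely delicate point is the union-bound setup: one must be careful that the event ``$\ualpha$ has a triple matching'' is exactly the union over index triples and pairs $\{k,l\}$ of the events that those three involutions share the orbit $\{k,l\}$, so that the union bound is legitimate. This is immediate from the definition of triple matching, since $\alpha_i(k)=\alpha_j(k)=\alpha_p(k)=l$ is equivalent to all three involutions containing the transposition $(k\ l)$. The rest is the routine estimate above, and the constant $4$ is clearly not tight---any convenient slack in the final inequality suffices.
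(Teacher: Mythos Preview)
Your argument is correct and follows essentially the same union-bound approach as the paper. The only cosmetic difference is the parameterization: the paper sums over triples $(i,j,p)$ and single positions $l$, using $\bbP(\alpha_j(l)=\alpha_p(l)=\alpha_i(l))=\frac{1}{(n-1)^2}$, whereas you sum over triples and unordered pairs $\{k,l\}$, using $\bbP(\{k,l\}\text{ is an orbit of all three})=\frac{1}{(n-1)^3}$; your version avoids the double-counting of $l$ and $\alpha_i(l)$ and hence gains an extra factor of $2$, but otherwise the computations and final estimate $\frac{n}{(n-1)^2}\le\frac{4}{n}$ are identical.
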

\begin{proof}
	Let $\ualpha=(\alpha_1,\dots,\alpha_m)$. Let $p$  denote the probability that  $\ualpha$ has a triple matching. 
     Let $\Omega\subseteq [\![1,m]\!]^3$ be the set of triples $(i,j,k)$ with $i<j<k$. For each $\omega=(i,j,k)\in \Omega$, let $Z_{\omega,l}$ denote the event where $\alpha_i(l)=\alpha_j(l)=\alpha_k(l)$.
     Each such event has probability  $\frac{1}{(n-1)^2}$ of occurring.
	Let $Z=\bigcup_{\omega\in \Omega}\bigcup_{l=1}^{n}Z_{\omega,l}$, and note that the probability of $Z$ occurring is $p$.
	As $\lvert\Omega\rvert\leq m^3$, we deduce via a union bound that $p\leq\frac{m^3n}{(n-1)^2}\leq \frac{4m^3}{n}$ where we used that $\frac{1}{n-1}\leq \frac{2}{n}$ for $n \ge 2$.
\end{proof}

\section{$A$-tree local actions}\label{sec:atree_local}
The aim of this section is to prove the following:
\begin{theorem}\label{thm: A_tree_generation_alternative}
	There is a constant $C$ such the following holds. If $\Gamma$ is a random BMW involution group of degree $(m,n)$ with $n>m^5$, then the $A$-tree local action of $\Gamma$ is $\Sym(m)$   with probability at least $1 - \frac{C}{m}$.
\end{theorem}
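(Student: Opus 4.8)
The plan is to show that the $A$-tree local involutions $\beta_1,\dots,\beta_n\in\Sym(m)$ generate $\Sym(m)$ with the claimed probability. Recall that $\beta_k$ is the local involution of $b_k$, and by Lemma~\ref{lem: from structure set to local actions} (applied with the roles of $A$ and $B$ swapped) the value $\beta_k(i)=j$ is determined by whether some set $\{a_i,b_k,a_j,b_l\}\in S_{\ualpha}$. In the random model this translates into a concrete combinatorial event on the tuple $\ualpha=(\alpha_1,\dots,\alpha_m)$: roughly, $\beta_k$ swaps $i$ and $j$ precisely when $\alpha_i$ and $\alpha_j$ agree at $k$, i.e.\ when $\alpha_i(k)=\alpha_j(k)$ (the ``double matchings''). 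Since $\ualpha$ has no triple matchings with high probability by Lemma~\ref{lem:triple_matchings} (here $n>m^5$ makes $\frac{4m^3}{n}<\frac{4}{m^2}$, negligible), each $\beta_k$ is a well-defined involution and I would first record this translation precisely, expressing the cycle structure of each $\beta_k$ in terms of the matchings among the $\alpha_i$ at the index $k$.

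The main step is a generation argument: I want to show that with high probability the subgroup $\langle\beta_1,\dots,\beta_n\rangle\le\Sym(m)$ is all of $\Sym(m)$. The natural strategy is a two-part approach using the classification-free criterion that a transitive subgroup of $\Sym(m)$ containing a transposition, or more robustly a primitive subgroup containing an element of small support, must be $\Sym(m)$. Concretely I would: (i) show that a single transposition of $\Sym(m)$ appears among the $\beta_k$ with high probability — a transposition $(i\ j)$ arises when exactly one column $k$ has $\alpha_i(k)=\alpha_j(k)$ and no other pair matches there, an event I can make likely by a first/second-moment computation since there are $n>m^5$ independent columns $k$ to draw from; and (ii) show the generated group is transitive (indeed primitive) on $\{1,\dots,m\}$, which again follows from a union-bound/second-moment estimate guaranteeing that the ``matching graph'' on $\{1,\dots,m\}$ (with an edge $ij$ whenever some column matches $i$ and $j$) is connected, and in fact $2$-connected or primitivity-forcing, with probability $1-O(1/m)$. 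Combining transitivity plus a transposition yields $\Sym(m)$ by the classical Jordan-type theorem.

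The key probabilistic estimates I would carry out are: the expected number of columns $k$ realizing a fixed matching pattern on a fixed pair $(i,j)$, using Lemma~\ref{lem:containing_transpositions} which gives $\Pr[\alpha_i(k)=\alpha_j(k)=l]=\frac{1}{(n-1)}$-type probabilities and hence $\Pr[\alpha_i(k)=\alpha_j(k)]=\frac{1}{n-1}$ for each fixed $k$; summed over the $n$ columns this gives an expected number of matchings per pair of order $\frac{n}{n-1}\approx 1$, and over all $\binom{m}{2}$ pairs an expected $\Theta(m^2)$ matchings, comfortably enough when $n>m^5$ to force connectivity of the matching graph and to produce an isolated (hence transposition-inducing) matching. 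I would use independence of the $\alpha_i$ across $i$ together with Lemma~\ref{lem:containing_transpositions} to control the relevant variances and apply Chebyshev or a direct union bound to collect everything into a single failure probability $\frac{C}{m}$.

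The hard part will be handling the dependence introduced by the no-triple-matchings conditioning and, more seriously, ensuring that enough columns yield \emph{clean} transpositions rather than double transpositions or longer structure in $\beta_k$ — a generic $\beta_k$ is itself a fixed-point-free-ish involution with many $2$-cycles, not a single transposition, so I cannot directly read off a transposition from one $\beta_k$. The resolution I anticipate is not to look for a $\beta_k$ that is a transposition, but to find a pair $(i,j)$ matched in exactly one column $k_0$ while controlling the rest of $\beta_{k_0}$, or alternatively to exploit that a primitive group containing any element with support bounded away from $m$ is $\Sym(m)$ or $\Alt(m)$ and then separately rule out $\Alt(m)$ by a parity/odd-permutation argument (some $\beta_k$ should be an odd permutation with high probability). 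Pinning down the cleanest sufficient combinatorial event, and verifying that its failure probability is genuinely $O(1/m)$ rather than merely $o(1)$, is where the real care is needed; the regime $n>m^5$ gives substantial slack, so I expect the bookkeeping rather than any conceptual obstruction to dominate.
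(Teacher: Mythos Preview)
Your overall strategy is close to the paper's, but the worry in your final paragraph rests on a miscalculation, and once that is fixed the argument is much simpler than you anticipate.

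You write that ``a generic $\beta_k$ is itself a fixed-point-free-ish involution with many $2$-cycles.'' This is false in the regime $n>m^5$. The number of $2$-cycles of $\beta_k$ equals the number of pairs $\{i,j\}$ with $\alpha_i(k)=\alpha_j(k)$; for a fixed $k$ this has expectation $\binom{m}{2}\frac{1}{n-1}=O(m^{-3})$, so almost every $\beta_k$ is the identity, and the non-trivial ones are overwhelmingly single transpositions. The paper formalises this via the event \emph{no overlapping matches}: there is no $k$ at which two distinct pairs $\{i,j\}\neq\{i',j'\}$ simultaneously satisfy $\alpha_i(k)=\alpha_j(k)$ and $\alpha_{i'}(k)=\alpha_{j'}(k)$. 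A union bound gives failure probability at most $\frac{m^4 n}{(n-1)^2}\le\frac{4m^4}{n}<\frac{4}{m}$. Once this holds, every non-trivial $\beta_k$ is exactly a transposition $(i\ j)$, namely the unique pair matching at column $k$.

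With that in hand, you do not need primitivity, Jordan-type theorems, or a parity argument to exclude $\Alt(m)$. The set of transpositions you obtain is precisely $\{(i\ j):\alpha_i\text{ and }\alpha_j\text{ share an orbit}\}$, i.e.\ the edge set of your ``matching graph'' on $\{1,\dots,m\}$. Such a set generates $\Sym(m)$ if and only if the graph is connected. The paper proves the slightly stronger statement that this graph has diameter at most $2$: since two random elements of $\FPF_n$ share an orbit with probability bounded below (converging to $1-e^{-1/2}$, hence eventually $\ge\tfrac{1}{3}$), for any fixed $i,i'$ the probability that no $j$ is adjacent to both is at most $(8/9)^{m-2}$, and a union bound over the $\binom{m}{2}$ pairs gives failure probability $O(m^2(8/9)^m)$. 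Collecting this with the no-triple-matchings and no-overlapping-matches bounds yields $1-\tfrac{C}{m}$.

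So your plan is essentially right; the single missing observation is the \emph{no overlapping matches} lemma, which dissolves the ``hard part'' you identify and renders the fallback arguments (primitivity, small support, ruling out $\Alt(m)$) unnecessary.
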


In the next proposition, we give conditions on  $\ualpha\in(\FPF_{n})^{m}$ that   ensure  the $A$-tree local action of the  induced BMW group $\Gamma_{\ualpha}$ is $\Sym(m)$. We then show that  all these conditions hold with sufficiently high probability.

We say that  $\ualpha=(\alpha_1,\dots, \alpha_m)\in(\FPF_{n})^{m}$ has \emph{overlapping matches} if there exist distinct pairs $\{i,j\},\{i',j'\}$, with $i\neq j$ and $i'\neq j'$, such that $\alpha_i(k)=\alpha_{j}(k)$ and  $\alpha_{i'}(k)=\alpha_{j'}(k)$ for some $k$.

\begin{proposition}\label{prop:A-side_properties}
Suppose that $\ualpha=(\alpha_1,\dots, \alpha_m)\in(\FPF_{n})^{m}$ satisfies:
	\begin{enumerate}[({A}1)]
		\item \label{item:no_triple_matchings} %(no triple matching) 
		$\ualpha$ has no triple matchings,
		\item \label{item:no_overlapping_matches} $\ualpha$ has no overlapping matches, and
		\item \label{item:lots_of_common_orbits}for all  $i,i'$, there exists $j$ such that $\alpha_i$ and $\alpha_j$ share a common orbit, and $\alpha_j$ and $\alpha_{i'}$ share a common orbit.
	\end{enumerate}
Then the $A$-tree local action of $\Gamma_{\ualpha}$ is $\Sym(m)$.
\end{proposition}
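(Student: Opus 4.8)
The plan is to recall that the $A$-tree local action is the subgroup $\gen{\beta_1,\dots,\beta_n}\le\Sym(m)$ generated by the $A$-tree local involutions, and to read off from $S_{\ualpha}$ exactly what each $\beta_k$ does. By the dual of Lemma~\ref{lem: from structure set to local actions}, $\beta_k(i)=j$ precisely when $\{a_i,b_k,a_j,b_l\}\in S_{\ualpha}$ for some $b_l$, i.e.\ when $I_{k,l}=\{i,j\}$ for some $l$. By the construction of $S_{\ualpha}$, this happens exactly when $i\neq j$ and $\alpha_i(k)=\alpha_j(k)=l$ for some $l>k$ (with the symmetric convention covering $l<k$). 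Thus the transposition $(i\ j)$ appears in $\beta_k$ precisely when $\alpha_i$ and $\alpha_j$ \emph{agree at $k$}. First I would make this dictionary precise: each $\beta_k$ is the product of transpositions $(i\ j)$ over all pairs with $\alpha_i(k)=\alpha_j(k)$, and hypotheses (A\ref{item:no_triple_matchings}) and (A\ref{item:no_overlapping_matches}) guarantee these transpositions are well-defined and disjoint, so $\beta_k$ is genuinely an involution and no index $i$ is involved in two different transpositions at the same $k$.

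With this dictionary, the core of the argument is a connectivity/transitivity statement. I would introduce the auxiliary graph $H$ on vertex set $\llbracket 1,m\rrbracket$ in which $i$ and $j$ are joined by an edge whenever $\alpha_i$ and $\alpha_j$ share a common orbit, equivalently whenever the transposition $(i\ j)$ occurs in some $\beta_k$. Hypothesis (A\ref{item:lots_of_common_orbits}) says exactly that $H$ has diameter at most $2$, and in particular that $H$ is connected. The goal is then to show that the subgroup generated by the collection of all transpositions realized by the $\beta_k$ is all of $\Sym(m)$. The standard fact here is that a set of transpositions generates $\Sym(m)$ if and only if the corresponding graph (whose edges are those transpositions) is connected on all $m$ points. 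So I would argue that each individual transposition $(i\ j)$ with $\{i,j\}$ an edge of $H$ lies in $\gen{\beta_1,\dots,\beta_n}$, and that these transpositions span a connected graph, whence the generated group is $\Sym(m)$.

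The one genuine obstacle is extracting a \emph{single} transposition $(i\ j)$ from the involution $\beta_k$, which a priori is a product of several disjoint transpositions $(i\ j)(i'\ j')\cdots$. Merely knowing that $(i\ j)$ appears as a factor of some $\beta_k$ does not immediately place $(i\ j)$ itself in the generated group. This is where hypothesis (A\ref{item:no_overlapping_matches}) does its essential work: it forbids two distinct matched pairs from colliding at the same value, so the structure of how the various $\beta_k$ overlap is controlled. The cleanest route, which I expect to take, is not to isolate transpositions one at a time but to invoke the graph-of-transpositions criterion directly on the multiset of all factors appearing across all $\beta_k$: the group $\gen{\beta_1,\dots,\beta_n}$ contains the group generated by that whole multiset of transpositions (since each $\beta_k$ is itself a product of disjoint ones, and disjoint transpositions commute so products of them generate the same group as the individual transpositions do, by a short induction using that the $\beta_k$ are involutions). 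Connectivity of $H$ from (A\ref{item:lots_of_common_orbits}) then finishes the proof that the generated subgroup is transitive and, being generated by transpositions whose graph is connected, equals $\Sym(m)$.

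To summarize, the steps in order are: (1) establish the structural dictionary $\beta_k=\prod_{\alpha_i(k)=\alpha_j(k)}(i\ j)$ and check it is a well-defined involution using (A\ref{item:no_triple_matchings}) and (A\ref{item:no_overlapping_matches}); (2) reduce the problem to showing the graph of all realized transpositions is connected and spans $\llbracket1,m\rrbracket$; (3) observe that $\gen{\beta_1,\dots,\beta_n}$ contains all the individual transpositions via the commuting-disjoint-factors argument; and (4) apply (A\ref{item:lots_of_common_orbits}) to conclude connectivity and hence that $\gen{\beta_1,\dots,\beta_n}=\Sym(m)$. The delicate point throughout is the passage from ``$\beta_k$ contains $(i\ j)$ as a factor'' to ``$(i\ j)\in\gen{\beta_1,\dots,\beta_n}$,'' and I expect the no-overlapping-matches hypothesis to be precisely the input that makes this passage clean.
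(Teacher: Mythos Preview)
Your overall plan is right, but you have misread hypothesis~(A\ref{item:no_overlapping_matches}), and this leads you into an unnecessary and in fact incorrect detour.

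You write that (A\ref{item:no_triple_matchings}) and (A\ref{item:no_overlapping_matches}) together ensure that the transpositions appearing in $\beta_k$ are ``well-defined and disjoint''. In fact, disjointness follows already from (A\ref{item:no_triple_matchings}) alone: if $\alpha_i(k)=\alpha_j(k)$ and $\alpha_i(k)=\alpha_{j'}(k)$ with $j\neq j'$, then $\alpha_i(k)=\alpha_j(k)=\alpha_{j'}(k)$ is a triple matching. The content of (A\ref{item:no_overlapping_matches}) is strictly stronger: it says that for each $k$ there is \emph{at most one} unordered pair $\{i,j\}$ with $i\neq j$ and $\alpha_i(k)=\alpha_j(k)$. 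Consequently each $\beta_k$ is either the identity or a \emph{single} transposition, not a product of several. This is exactly how the paper uses (A\ref{item:no_overlapping_matches}), and it dissolves the ``one genuine obstacle'' you identify.

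This matters because your proposed step~(3) is wrong as stated. Given an involution written as a product of disjoint transpositions, the group it generates has order~$2$ and certainly does not contain the individual transpositions; e.g.\ $\gen{(1\ 2)(3\ 4)}$ does not contain $(1\ 2)$. The claim that ``disjoint transpositions commute so products of them generate the same group as the individual transpositions do'' is false, and no short induction will repair it. So without the correct reading of (A\ref{item:no_overlapping_matches}) your argument has a genuine gap at exactly the point you flagged as delicate.

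Once you use (A\ref{item:no_overlapping_matches}) correctly, the proof collapses to the paper's: each $\beta_k$ is a single transposition (or the identity); by (A\ref{item:lots_of_common_orbits}) the graph $H$ you describe has diameter at most~$2$, hence is connected; and then either invoke the standard fact that a connected transposition graph generates $\Sym(m)$, or, as the paper does explicitly, observe that for any $i,i'$ one finds $\beta_l=(i\ j)$ and $\beta_{l'}=(j\ i')$ and computes $\beta_l\beta_{l'}\beta_l=(i\ i')$.
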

\begin{proof}
    As $\ualpha$ has no triple matchings, we let $\Gamma=\Gamma_{\ualpha}$ and $S= S_{\ualpha}$
    be respectively the associated BMW group and structure set associated to $\ualpha$. 
    Let $\beta_1,\dots, \beta_n$
    be the $A$-tree local involutions of $\Gamma$.
    
	Suppose that $\alpha_i(k)=\alpha_j(k)=l$ for some $1\leq i < j\leq m$ and $1\leq k < l\leq n$.
	We claim that $\beta_k$ is the transposition $(i\;j)$. 
	By the definition of $S$, $\{a_i,b_k,a_j,b_l\} \in S$, and it follows from this that $\beta_k(i)=j$.  
	By
	\ref{item:no_overlapping_matches},
	for all distinct $i', j' \neq i,j$, we have that $\beta_{i'}(k) \neq \beta_{j'}(k)$.
	From this it follows that for all $i' \neq i,j$, $\{a_{i'},b_k,a_{i'},b_{\alpha_{i'}(k)}\} \in S$.
	Thus, $\beta(i') = i'$ for all $i' \neq i,j$. We conclude that $\beta_k$ is indeed the transposition $(i\;j)$, showing the claim.

     Let $1\leq i < i'\leq m$. By \ref{item:lots_of_common_orbits}, there exists some $j\neq i,i'$ such that $\alpha_i$ and $\alpha_j$ share a common orbit, and $\alpha_j$ and $\alpha_{i'}$ share a common orbit. By the previous paragraph, this implies there exist $1\leq l,l'\leq n$ such that $\beta_l$ is the transposition $(i\;j)$ and $\beta_{l'}$ is the transposition $(j\;i')$. 
     Thus $\beta_{l} \beta_{l'} \beta_{l}$ is the transposition $(i\;i')$.
     Consequently, the subgroup generated by $\{\beta_1,\dots, \beta_n\}$  contains all transpositions and so generates $\Sym(m)$.
\end{proof}

The remainder of this section is devoted to showing that a 
random
$\ualpha\in\FPF_{n}^{m}$ satisfies the three conditions from Proposition \ref{prop:A-side_properties}  with sufficiently high probability.  
Condition \ref{item:no_triple_matchings} was shown to hold in Lemma \ref{lem:triple_matchings}. We thus begin with condition \ref{item:no_overlapping_matches}.

\begin{lemma}\label{lem:overlapping_matches}
	 A 
	 random
	 $\ualpha\in(\FPF_{n})^{m}$ 
	 has no overlapping matches with probability at least $1-\frac{4m^4}{n}$.
\end{lemma}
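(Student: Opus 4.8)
The plan is to mirror the union-bound argument used in the proof of Lemma~\ref{lem:triple_matchings}.

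First I would set up the collection of bad events. Writing $\ualpha = (\alpha_1,\dots,\alpha_m)$, for each position $l \in \{1,\dots,n\}$ and each pair of distinct unordered index-pairs $\{i,j\} \neq \{i',j'\}$ (with $i \neq j$ and $i' \neq j'$), let $W_l^{\{i,j\},\{i',j'\}}$ be the event that $\alpha_i(l) = \alpha_j(l)$ and $\alpha_{i'}(l) = \alpha_{j'}(l)$. By definition $\ualpha$ has an overlapping match exactly when at least one of these events occurs, so the probability $q$ of an overlapping match equals the probability of the union of all the $W_l^{\{i,j\},\{i',j'\}}$.

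Next I would bound the probability of a single such event. The key input is Lemma~\ref{lem:containing_transpositions} (applied with $k=1$), which says that for a fixed $c \neq l$ a random element of $\FPF_n$ sends $l$ to $c$ with probability $\tfrac{(n-3)!!}{(n-1)!!} = \tfrac{1}{n-1}$. Conditioning on the value $c := \alpha_i(l)$, which is automatically distinct from $l$, the event $\alpha_i(l) = \alpha_j(l)$ then has probability $\tfrac{1}{n-1}$. I would then split into two cases. If $\{i,j\}$ and $\{i',j'\}$ are disjoint, the two equalities involve four independent involutions, so the joint event has probability $\tfrac{1}{(n-1)^2}$. If the pairs share an index, the event forces a triple coincidence $\alpha_i(l) = \alpha_j(l) = \alpha_{j'}(l)$; conditioning on $\alpha_i(l)$ and using independence of the two remaining involutions again gives probability $\tfrac{1}{(n-1)^2}$. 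Thus in every case $\Pr\!\left(W_l^{\{i,j\},\{i',j'\}}\right) = \tfrac{1}{(n-1)^2}$.

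Finally I would apply the union bound. The number of ways to choose two distinct unordered index-pairs is at most $\binom{m}{2}^2 \le \tfrac{m^4}{4}$, and $l$ ranges over $n$ values, so there are at most $\tfrac{m^4 n}{4}$ bad events. Hence $q \le \tfrac{m^4 n}{4(n-1)^2}$, and using $\tfrac{n}{(n-1)^2} \le \tfrac{4}{n}$ for $n \ge 2$ (exactly as in the proof of Lemma~\ref{lem:triple_matchings}) yields $q \le \tfrac{m^4}{n} \le \tfrac{4m^4}{n}$, so $\ualpha$ has no overlapping matches with probability at least $1 - \tfrac{4m^4}{n}$. The only slightly delicate point, and the one I would be most careful about, is the per-event probability when the two index-pairs overlap: there the two equalities are no longer independent, but the case collapses to a single triple coincidence whose probability is still $\tfrac{1}{(n-1)^2}$, matching the disjoint case, so the uniform per-event bound goes through.
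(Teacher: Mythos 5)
Your proposal is correct and follows essentially the same route as the paper: a union bound over all choices of a position and a pair of distinct index-pairs, with each bad event having probability $\frac{1}{(n-1)^2}$. The only differences are cosmetic — you count unordered index-pairs (getting a slightly sharper $\tfrac{m^4}{4}$ in place of the paper's $m^4$) and you spell out the per-event probability computation, including the overlapping-index case, which the paper simply asserts.
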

\begin{proof}
	Let $\ualpha=(\alpha_1,\dots,\alpha_m)$. Let  $p$ denote the probability that $\ualpha$ has overlapping matches. 
    Let $\Pi\subseteq [\![1,m]\!]^4$ be the set of quadruples $(i,i',j,j')$ such that $\{i,i'\}\neq \{j,j'\}$, $i\neq i'$ and $j\neq j'$.  For each $\pi=(i,i',j,j')\in \Pi$, let $Y_{\pi,k}$ be the event where $\alpha_i(k)=\alpha_{i'}(k)$ and  $\alpha_{j}(k)=\alpha_{j'}(k)$. The probability that each such event occurs is equal to  $\frac{1}{(n-1)^2}$. 
	Let $Y=\bigcup_{\pi\in \Pi}\bigcup_{k=1}^{n}Y_{\pi,k}$, and note that the probability that $Y$ occurs is equal to $p$. Since $\lvert \Pi\rvert\leq m^4$, we deduce via  union bound that  $p \le \frac{m^4n }{(n-1)^2}\leq \frac{4m^4}{n}$.
\end{proof}

Next, we give the probability that two fixed-point-free involutions share a common orbit.

\begin{lemma}\label{lem:prob_match}
	The probability that two random elements of $\FPF_{n}$ share
	a common orbit is
	\[\sum_{k=1}^{\frac{n}{2}} (-1)^{k+1} {\frac{n}{2} \choose k} \frac{(n - 2k - 1)!!}{(n-1)!!},\] 
	Moreover, this probability converges to $1-e^{-\frac{1}{2}}$ as $n\rightarrow \infty$.
\end{lemma}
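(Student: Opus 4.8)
The plan is to compute the probability that two independent random fixed-point-free involutions $\sigma, \tau \in \FPF_n$ share a common orbit. The key observation is that $\tau$ has exactly $\frac{n}{2}$ orbits (each a transposition, since $\tau$ is fixed-point-free), and I want the probability that at least one of these orbits is also an orbit of $\sigma$. For a fixed $\tau$, I would enumerate its orbits as $O_1, \dots, O_{n/2}$ and define, for each $i$, the event $E_i$ that $\sigma$ contains the orbit $O_i$. Then ``$\sigma$ and $\tau$ share a common orbit'' is exactly $\bigcup_i E_i$, and I apply inclusion--exclusion to $\mathbb{P}(\bigcup_i E_i)$.

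\textbf{Evaluating the terms.} For any fixed collection of $k$ of the orbits $O_{i_1}, \dots, O_{i_k}$ (which are automatically pairwise disjoint since they are distinct orbits of $\tau$), Lemma~\ref{lem:containing_transpositions} gives that the probability $\sigma$ contains all $k$ of them is exactly $\frac{(n-2k-1)!!}{(n-1)!!}$. Crucially, this value depends only on $k$ and not on which orbits were chosen, so the $k$-th inclusion--exclusion sum is simply $\binom{n/2}{k} \frac{(n-2k-1)!!}{(n-1)!!}$. Summing with alternating signs yields
\[
\mathbb{P}(\sigma, \tau \text{ share an orbit}) = \sum_{k=1}^{n/2} (-1)^{k+1} \binom{n/2}{k} \frac{(n-2k-1)!!}{(n-1)!!}.
\]
Since this quantity is the same for every choice of $\tau$, conditioning on $\tau$ and averaging gives the same formula for two independent random involutions, establishing the first claim. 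Note that I must observe that the events $E_i$ are genuinely events about the random $\sigma$ only, with $\tau$ fixed, so that Lemma~\ref{lem:containing_transpositions} applies verbatim.

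\textbf{The asymptotic limit.} For the convergence to $1 - e^{-1/2}$, I would simplify the ratio of double factorials. Writing $\frac{(n-2k-1)!!}{(n-1)!!} = \prod_{j=0}^{k-1} \frac{1}{n-1-2j}$, each factor is asymptotically $\frac{1}{n}$, so the $k$-th term behaves like $\binom{n/2}{k} n^{-k} \approx \frac{(n/2)^k}{k!} n^{-k} = \frac{1}{k! \, 2^k} = \frac{(1/2)^k}{k!}$. The alternating sum $\sum_{k \ge 1} (-1)^{k+1} \frac{(1/2)^k}{k!}$ is precisely $-(e^{-1/2} - 1) = 1 - e^{-1/2}$.

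\textbf{Main obstacle.} I expect the termwise-limit computation to be the easy heuristic, but the genuine difficulty is \emph{justifying the interchange of limit and summation}: the number of terms grows with $n$, so I cannot naively pass to the limit term by term. The cleanest rigorous route is to recognize the finite sum as an exact combinatorial probability and bound the tail. I would show the terms are dominated by a summable sequence uniformly in $n$ — for instance, by checking that $\binom{n/2}{k}\frac{(n-2k-1)!!}{(n-1)!!} \le \frac{1}{k! \, 2^k}$ (or a comparable bound, using $\binom{n/2}{k} \le \frac{(n/2)^k}{k!}$ together with $\frac{(n-2k-1)!!}{(n-1)!!} \le (n-1)^{-k} \le (n/2)^{-k}$ for the relevant range) — and then apply dominated convergence for series to pass to the limit. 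Alternatively, one can connect the sum to the known probability that a uniform random perfect matching of $[n]$ and a fixed perfect matching share no common edge, which is a classical derangement-type quantity converging to $e^{-1/2}$; then the shared-orbit probability is its complement. I would verify the dominating bound carefully, as it is the one place where the argument is not purely formal.
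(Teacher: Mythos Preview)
Your approach is essentially identical to the paper's: fix one involution, apply inclusion--exclusion over its $n/2$ orbits using Lemma~\ref{lem:containing_transpositions}, then for the limit bound the terms uniformly by a summable sequence and invoke dominated convergence for series (the paper phrases this as Tannery's theorem).

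One correction in your dominating bound: the inequality $\frac{(n-2k-1)!!}{(n-1)!!}\le (n-1)^{-k}$ is backwards, since each factor $\frac{1}{n-1-2j}$ is at least $\frac{1}{n-1}$; consequently the sharper bound $\binom{n/2}{k}\frac{(n-2k-1)!!}{(n-1)!!}\le \frac{1}{k!\,2^k}$ you propose is actually false (each factor $\frac{r-i}{2(r-i)-1}$ exceeds $\tfrac12$). The paper instead writes the term as $\frac{1}{k!}\prod_{i=0}^{k-1}\frac{r-i}{2r-2i-1}$ with $r=n/2$, observes each factor is at most $1$, and obtains the weaker but still summable bound $\frac{1}{k!}$, which suffices.
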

\begin{proof}
	Suppose $\alpha,\alpha'\in \FPF_{n}$ are chosen uniformly at random.
	Let $\{O_1,\dots, O_r\}$ be the set of orbits of $\alpha$ where $r = \frac{n}{2}$. Let $T_i\subseteq \FPF_n$ be the set of \fpf involutions with orbit $O_i$, and let $Z$ be the number of \fpf involutions in $\FPF_{n}$
	with orbit  $O_i$ for some $1\leq i\leq n$. By inclusion-exclusion:
	\[Z = \left|\bigcup_{i=1}^{r} T_i \right| = \sum_{k=1}^{r} (-1)^{k+1} \left( \sum_{1 \le n_1 < \dots < n_k \le r} |T_{n_1} \cap \dots \cap T_{n_k}|\right)\]
	
	By Lemma~\ref{lem:containing_transpositions}, $|T_{n_1} \cap \dots \cap T_{n_k}| = (n - 2k - 1)!!$ for every $1 \le n_1 < \dots < n_k \le r$. By  the above equation, we then have:
	\[Z =  \sum_{k=1}^{r} (-1)^{k+1} {r \choose k} (n - 2k - 1)!!\]
	By Lemma \ref{lem:count_fpf}, we can divide by $(n  - 1)!!$ to conclude the first claim.
	
	We now prove the convergence claim.
	Set $a_{k,r}=(-1)^{k+1} {r \choose k} \frac{(2r - 2k - 1)!!}{(2r-1)!!}$ for $k\leq r$ and $a_{k,r}=0$ otherwise. We want to show that  $\lim_{r\rightarrow \infty} \sum_{k=1}^ra_{k,r}$ is equal to $1-e^{-\frac{1}{2}}$.
	
	We first note that  for all $k\leq r$, \begin{align*}
		a_{k,r}&=(-1)^{k+1} {r \choose k} \frac{1}{(2r-1)(2r-3)\dots (2r-2k+1)}\\
			&=\frac{(-1)^{k+1}}{k!}\frac{r(r-1)\dots (r-k+1)}{(2r-1)(2r-3)\dots (2r-2k+1)}\\
			&=\frac{(-1)^{k+1}}{k!}\prod_{i=0}^{k-1}\frac{r-i}{2r-2i-1}
	\end{align*}
	Thus $\lvert a_{k,r}\rvert\leq \frac{1}{k!}$ for all $k$ and $r$. Since $\sum_{k=1}^\infty\frac{1}{k!}<\infty$, Tannery's theorem implies that $\lim_{r\rightarrow \infty} \sum_{k=1}^\infty a_{k,r}$ exists and is equal to $ \sum_{k=1}^\infty(\lim_{r\rightarrow \infty}a_{k,r})$. Clearly $\lim_{r\rightarrow \infty}a_{k,r}=\frac{(-1)^{k+1}}{k!2^k}$.
	Therefore \begin{align*}
		\lim_{r\rightarrow \infty} \sum_{k=1}^ra_{k,r}&=\sum_{k=1}^\infty \frac{(-1)^{k+1}}{k!2^k} = 1-\sum_{k=0}^\infty \frac{1}{k!}(\frac{-1}{2})^{k}=1-e^{-\frac{1}{2}}.\qedhere
	\end{align*} 
\end{proof}

\begin{corollary} \label{cor:one_third}
    There exists a number $N$ such that whenever $n \ge N$, the probability that two random elements in $\FPF_n$ share a common orbit is at least $\frac{1}{3}$.
\end{corollary}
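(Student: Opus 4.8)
The plan is to read this off immediately from the convergence statement in Lemma~\ref{lem:prob_match}. Write $p_n$ for the probability that two random elements of $\FPF_n$ share a common orbit. Lemma~\ref{lem:prob_match} already establishes that $p_n \to 1 - e^{-1/2}$ as $n \to \infty$, so the entire content of the corollary reduces to the single numerical fact that the limiting value strictly exceeds $\tfrac13$.

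First I would verify this numerical inequality. The claim $1 - e^{-1/2} > \tfrac13$ is equivalent, after rearranging, to $e^{-1/2} < \tfrac23$, hence to $e^{1/2} > \tfrac32$, hence to $e > \tfrac94 = 2.25$. Since $e \approx 2.718 > 2.25$, the inequality holds, and indeed $1 - e^{-1/2} \approx 0.3935$ comfortably exceeds $\tfrac13 \approx 0.3333$.

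Given that $L := 1 - e^{-1/2}$ satisfies $L > \tfrac13$, the corollary follows directly from the definition of a limit: set $\varepsilon := L - \tfrac13 > 0$, and apply Lemma~\ref{lem:prob_match} to produce an $N$ such that $|p_n - L| < \varepsilon$ whenever $n \ge N$. For such $n$ we then have $p_n > L - \varepsilon = \tfrac13$, as required. I do not anticipate any genuine obstacle here; the substantive work (the inclusion–exclusion computation and the application of Tannery's theorem giving the exact limit $1-e^{-1/2}$) is entirely carried out in Lemma~\ref{lem:prob_match}, and what remains is only the elementary estimate $e > \tfrac94$ together with an unwinding of the definition of convergence.
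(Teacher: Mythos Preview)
Your proposal is correct and matches the paper's approach exactly: the paper states this corollary immediately after Lemma~\ref{lem:prob_match} with no separate proof, treating it as an immediate consequence of the convergence $p_n \to 1 - e^{-1/2} > \tfrac13$. Your write-up simply makes explicit the trivial $\varepsilon$-$N$ unwinding and the numerical check $e > \tfrac94$ that the paper leaves implicit.
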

\begin{remark}
    It can be shown using estimates that $N$ in the previous corollary, can be taken to be $2$.
\end{remark}

Finally, we show that the third property of Proposition~\ref{prop:A-side_properties} holds with high probability.

\begin{lemma}\label{lem:many_matches}
	Let $N$ be as in Corollary~\ref{cor:one_third}, $n \ge N$ and   $\ualpha\in(\FPF_{n})^{m}$ be a   random element.
	 Then with probability at least
	 $1 -2m^2\left(\frac{8}{9}\right)^{m}$ the following property holds: for all $1 \le i < i' \le m$ there exists a $j$ such that $\alpha_i$ and $\alpha_j$ share a common orbit, and $\alpha_j$ and $\alpha_{i'}$ share a common orbit.
\end{lemma}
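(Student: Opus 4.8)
The plan is to union-bound over pairs and, for each pair, to search among the remaining $m-2$ involutions for a common ``hub'' (so this lemma is the probabilistic form of condition \ref{item:lots_of_common_orbits} of Proposition~\ref{prop:A-side_properties}). Write $N_{i,i'}$ for the bad event that no $j\notin\{i,i'\}$ has $\alpha_j$ sharing an orbit with both $\alpha_i$ and $\alpha_{i'}$. Since there are $\binom{m}{2}<m^2$ pairs, it suffices by a union bound to show $\Pr(N_{i,i'})\le (8/9)^{m-2}$ for each fixed pair: the total failure probability is then at most $m^2(8/9)^{m-2}=\tfrac{81}{64}m^2(8/9)^m\le 2m^2(8/9)^m$, as required.

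Fix $i<i'$ and condition on the two involutions $\alpha_i,\alpha_{i'}$. The remaining $\alpha_j$ with $j\notin\{i,i'\}$ are then i.i.d.\ uniform fixed-point-free involutions, independent of $\alpha_i,\alpha_{i'}$, and the event $E_j$ that $\alpha_j$ shares an orbit with both $\alpha_i$ and $\alpha_{i'}$ depends only on $\alpha_j$. Hence, conditionally, the $E_j$ are independent, so $\Pr(N_{i,i'}\mid \alpha_i,\alpha_{i'})=(1-q)^{\,m-2}$, where $q=q(\alpha_i,\alpha_{i'})$ is the common conditional probability of a single $E_j$. Everything therefore reduces to the uniform lower bound
\[ q(\sigma,\tau)\;:=\;\Pr_{\rho\in\FPF_n}\big(\rho\text{ shares an orbit with both }\sigma\text{ and }\tau\big)\;\ge\;\tfrac19, \]
valid for all $\sigma,\tau\in\FPF_n$ once $n\ge N$; granting this gives $\Pr(N_{i,i'})\le(8/9)^{m-2}$ and the lemma follows.

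The heart of the matter, and the main obstacle, is this per-hub estimate. It is tempting to read it off Corollary~\ref{cor:one_third} as $q\ge(1/3)\cdot(1/3)$, but this is not legitimate: for a \emph{fixed} pair $\sigma,\tau$ the events ``$\rho$ shares an orbit with $\sigma$'' and ``$\rho$ shares an orbit with $\tau$'' are functions of the single random $\rho$ and need not be independent; they can in fact be negatively correlated, and for small $n$ (e.g.\ $n=4$ with $\sigma,\tau$ complementary) $q$ can even vanish. Thus the bound genuinely uses more than Corollary~\ref{cor:one_third}. (Conditioning instead on the hub $\alpha_j$ and using $\alpha_i\perp\alpha_{i'}$ together with conjugation-invariance of the uniform measure on $\FPF_n$ shows that the \emph{unconditional} probability of $E_j$ equals $p_n^2$, where $p_n\ge 1/3$ is the share-probability of Lemma~\ref{lem:prob_match}; but this conflicts with the conditioning needed for independence across $j$.) To obtain the worst-case conditional bound I would estimate $q(\sigma,\tau)$ directly: lower-bound it by the probability that $\rho$ contains some disjoint pair consisting of one orbit of $\sigma$ and one orbit of $\tau$. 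Each orbit of $\sigma$ meets at most two orbits of $\tau$, so there are at least $\tfrac n2\big(\tfrac n2-2\big)$ such disjoint pairs, each contained in $\rho$ with probability $\tfrac{1}{(n-1)(n-3)}$ by Lemma~\ref{lem:containing_transpositions}. A Bonferroni (inclusion--exclusion) estimate then bounds $q(\sigma,\tau)$ below by roughly $\tfrac14$ minus a lower-order overcount, which exceeds $\tfrac19$ for $n\ge N$. Establishing this uniform correlation estimate---not the union bound or the counting, which are routine---is where the real work lies.
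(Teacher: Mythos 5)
Your skeleton is the same as the paper's: a union bound over the $\binom{m}{2}$ pairs, and for each pair a product bound over the $m-2$ candidate hubs $j$, with per-hub success probability at least $\tfrac19$. What you have additionally done is put your finger on a real subtlety that the paper's own proof elides. The paper obtains $\bbP(Y_{i,j,i'})\ge\tfrac19$ exactly as you describe (condition on $\alpha_j$, use independence of $\alpha_i,\alpha_{i'}$ and Corollary~\ref{cor:one_third}), and then multiplies over $j$, asserting that the events $Y_{i,j,i'}$ are independent. As you note, for fixed $i,i'$ these events all involve $\alpha_i$ and $\alpha_{i'}$, so they are independent only conditionally on $(\alpha_i,\alpha_{i'})$; unconditionally $\bbP\bigl(\bigcap_j Y_{i,j,i'}^c\bigr)=\bbE\bigl[(1-q(\alpha_i,\alpha_{i'}))^{m-2}\bigr]$, and Jensen's inequality bounds this \emph{below} by $(1-\bbE[q])^{m-2}$, which is the wrong direction. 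Your $n=4$ example with $q=0$ shows the conditional probability genuinely varies, so the uniform lower bound on $q(\sigma,\tau)$ that you propose is the correct repair, not a pedantic detour.

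The gap in your proposal is that this uniform bound is only sketched, and the numbers in the sketch do not close. In the two-term Bonferroni bound $q\ge S_1-S_2$ over the events $A_{O,O'}$ (with $O$ an orbit of $\sigma$, $O'$ an orbit of $\tau$, disjoint), one has $S_1\to\tfrac14$ as you say, but $S_2$ is \emph{not} lower order: pairs of events sharing one orbit (say $O_1=O_2$, $O_1'\ne O_2'$) number about $\tfrac{n}{2}\binom{n/2}{2}\sim\tfrac{n^3}{16}$ and each has probability $\sim n^{-3}$ by Lemma~\ref{lem:containing_transpositions}, contributing $\tfrac1{16}$; together with the symmetric case and the $\tfrac1{32}$ from fully disjoint quadruples this gives $S_2\to\tfrac5{32}$, so $S_1-S_2\to\tfrac3{32}<\tfrac19$. (A Chung--Erd\H{o}s bound $S_1^2/(S_1+2S_2)$ lands exactly at $\tfrac19$ in the limit, which is too tight to survive the error terms.) To actually reach $\tfrac19$ uniformly you need a sharper argument --- e.g.\ a moment/Poisson-approximation computation showing $q(\sigma,\tau)\to(1-e^{-1/2})^2\approx 0.155$ uniformly in $\sigma,\tau$ --- and the threshold on $n$ will in general exceed the $N$ of Corollary~\ref{cor:one_third}. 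None of this matters for the downstream application (any uniform constant $\delta>0$ in place of $\tfrac19$ only changes the base $\tfrac89$ to $1-\delta$), but as written your proposal proves a weaker bound than the one stated.
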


\begin{proof}
    Let
    $\ualpha = (\alpha_1, \dots, \alpha_m)$.
	For each $1 \le i < i' \le m$ and $j\neq i,i'$, let $Y_{i,j,i'}$ be the event that both  $\alpha_i$ and $\alpha_j$  share a common orbit, and $\alpha_j$ and $\alpha_{i'}$ share a common orbit. 
	Since (up to conjugation) we can treat $\alpha_{j}$ as fixed, and $\alpha_i$ and $\alpha_{i'}$ as independently randomly chosen, we get that the event that $\alpha_i$ and $\alpha_j$ share an orbit and the event that $\alpha_j$ and $\alpha_{i'}$ share an orbit are independent.
	By Corollary~\ref{cor:one_third} each of them occurs with probability at least $\frac{1}{3}$. 
	Therefore, the probability of the event $Y_{i,j,i'}$ is $\bbP(Y_{i,j,i'}) \ge \left(\frac{1}{3}\right)^2 = \frac{1}{9}$.

	Let $Z_{i,i'}$ be the event that no $Y_{i,j,i'}$ occurs for any $j\notin \{i,i'\}$, and let $Z= \bigcup_{i \neq i'} Z_{i, i'}$.	
	Observe that the property in the lemma's statement holds exactly when the event $Z$ does not occur.
    Since $Z_{i,i'} = \bigcap _{j\notin \{i,i'\}} Y_{i,j,i'}^c$ 
    and the events $Y_{i,j,i'}$ are independent, we have that
	\[\bbP(Z_{i,i'}) \le \left(1-\frac{1}{9}\right)^{m-2} =\left(\frac{8}{9}\right)^{m-2}\leq 2 \left(\frac{8}{9}\right)^{m},\]
    and by a union bound that
	\[ \bbP(Z) \leq \sum_{i \neq i'} \bbP(Z_{i,i'}) \leq  2m^2\left(\frac{8}{9}\right)^{m}. \qedhere\]
\end{proof}

\begin{proof}[Proof of Theorem \ref{thm: A_tree_generation_alternative}]
    Suppose first that $n \ge N$ where $N$ is as in Corollary~\ref{cor:one_third}. 
	By Lemmas \ref{lem:triple_matchings}, \ref{lem:overlapping_matches} and \ref{lem:many_matches}, and by a union bound, a 
	random
	$\ualpha\in \FPF_{n}^{m}$ satisfies the three properties of Proposition \ref{prop:A-side_properties} with probability at least 
	\[ p(m,n)\coloneqq 1 - \frac{4m^3}{n} - \frac{4m^4}{n} - 2m^2\left(\frac{8}{9}\right)^{m}.\] 
	Since $n> m^5$, we can pick some constant $C$ such that $p(m,n)\geq 1-\frac{C}{m}$ as required. Moreover, by choosing $C$ large enough, we can guarantee that this holds for all $n$ (not just for $n \ge N$).
\end{proof}

\section{$B$-tree local action}\label{sec:btree_local}

In this section we show that the $B$-tree local action of a random BMW group contains the alternating group $\Alt(n)$ asymptotically almost surely (Corollary \ref{cor: full B local action}).
This follows from a generation result for random fixed-point-free involutions (Theorem~\ref{thm: A_side_generation}).
This theorem should be compared to those of Dixon \cite{dixon1969probability} and  Liebeck-Shalev \cite{liebeck1996classical} which address generation results for random permutations and random involutions respectively.
In fact, our proof closely follows that of Liebeck-Shalev.

\begin{theorem}
\label{thm: A_side_generation}
Let $m\ge 3$, $\epsilon >0$ and $\ualpha = (\alpha_1,\ldots,\alpha_m)\in (\FPF_n)^m$ 
a random element.
Then 
$$\bbP (\Alt(n) \le \gen{\alpha_1,\ldots,\alpha_m}) \ge 1-O(n^{-(m\cdot(1/2-\epsilon)-1)}).$$
\end{theorem}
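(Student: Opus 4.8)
The plan is to follow the Liebeck--Shalev strategy for showing that random involutions generate a large group, adapted to the fixed-point-free setting. The subgroup $H = \gen{\alpha_1,\dots,\alpha_m}$ is transitive with high probability (a connectivity argument on the random permutation structure, since fixed-point-free involutions have no fixed points to obstruct this). The key dichotomy is then: either $H \ge \Alt(n)$, or $H$ is contained in some maximal subgroup $M$ of $\Sym(n)$ that is \emph{not} $\Alt(n)$. So I would bound $\bbP(H \not\ge \Alt(n))$ by a sum over conjugacy classes of maximal subgroups $M \not\ge \Alt(n)$ of the probability that all $m$ of the independently chosen $\alpha_i$ lie in (a conjugate of) $M$.

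The core estimate is therefore: for a fixed maximal subgroup $M$, the probability that a single random $\alpha \in \FPF_n$ lies in $M$ is $|\FPF_n \cap M|/|\FPF_n|$, and since the $\alpha_i$ are independent this probability gets raised to the $m$-th power, then multiplied by the number of conjugates of $M$ (at most $[\Sym(n):M]$). So I must show
\[
\sum_{M} [\Sym(n):M]\left(\frac{|\FPF_n \cap M|}{|\FPF_n|}\right)^{m} = O\!\left(n^{-(m(1/2-\epsilon)-1)}\right),
\]
where the sum ranges over conjugacy class representatives of maximal subgroups $M$ with $M \not\ge \Alt(n)$. Using the Aschbacher/O'Nan--Scott classification of maximal subgroups of $\Sym(n)$, I would split into the intransitive, imprimitive, and primitive cases. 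For the primitive maximal subgroups one has the strong bound $|M| \le 4^n$ (or $|M|\le n^{c\sqrt n}$ excluding $\Alt(n)$) together with $|\FPF_n\cap M|\le |M|$, and $|\FPF_n| = (n-1)!! \approx n^{n/2}$ by Lemma~\ref{lem:count_fpf}, which makes the primitive contribution negligible. The dominant terms come from the intransitive subgroups $M = \Sym(k)\times\Sym(n-k)$ and the imprimitive subgroups $M = \Sym(k)\wr\Sym(n/k)$, where one must estimate $|\FPF_n \cap M|$ carefully.

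The heart of the argument, and the step I expect to be the main obstacle, is obtaining a sharp-enough upper bound on the \emph{proportion} of fixed-point-free involutions lying in each intransitive or imprimitive maximal subgroup. For the intransitive case $M = \Sym(k)\times \Sym(n-k)$, a fixed-point-free involution in $M$ must restrict to a fixed-point-free involution on each block, so its count is a convolution of double-factorials; the binomial factor $\binom{n}{k}$ from the number of conjugates has to be beaten down by the ratio of double factorials, and the worst case is $k$ near $n/2$ (or the extreme $k=2$, governing the $1/2-\epsilon$ exponent). I would estimate $|\FPF_n\cap(\Sym(k)\times\Sym(n-k))|/|\FPF_n|$ using the recursion $\phi(n)=(n-1)\phi(n-2)$ from Lemma~\ref{lem:count_fpf} and Stirling-type bounds, showing it decays like $n^{-k/2}$ up to lower-order factors, so that raising to the $m$-th power and summing over $k$ yields the claimed exponent $m(1/2-\epsilon)-1$. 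The $-1$ in the exponent and the $\epsilon$ loss are precisely the slack one accumulates from summing the $O(n)$ intransitive classes and from the sub-exponential correction terms in the double-factorial asymptotics, mirroring exactly the bookkeeping in Liebeck--Shalev; getting these constants to line up while keeping the primitive and imprimitive contributions strictly smaller is where the real care is needed.
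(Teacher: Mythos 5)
Your proposal is correct and follows essentially the same route as the paper: a union bound over maximal subgroups of $\Sym(n)$ not containing $\Alt(n)$, an O'Nan--Scott split into primitive, intransitive, and imprimitive cases, and in each case an estimate of $|\FPF_n\cap M|/|\FPF_n|$ against $[\Sym(n):M]$ using $|\FPF_n|=(n-1)!!\approx|\Sym(n)|^{1/2}$. The only organizational difference is that the paper packages the case analysis into a single clean inequality $\bbP(\alpha\in M)\le c\,[\Sym(n):M]^{-1/2+\epsilon}$ and then invokes the Liebeck--Shalev zeta-function bound $\sum_M[\Sym(n):M]^{-s}=O(n^{-(s-1)})$, rather than summing the three contributions directly as you propose; note also that the delicate case is the imprimitive wreath products $\Sym(k)\wr\Sym(l)$ rather than the intransitive subgroups, which the paper dispatches in a few lines.
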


\begin{corollary}\label{cor: full B local action}
    Let $m = m(n)$ be a function of $n$ satisfying
    $5\le m(n) = O(n^{1/3})$, and let $\ualpha \in (\FPF_n)^m$ be a random element. 
   The probability that the $B$-tree local action of the random BMW group $\Gamma_{\ualpha}$ contains $\Alt(n)$ is 
   $1- O(1/n)$. 
\end{corollary}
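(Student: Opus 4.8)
The plan is to combine Theorem~\ref{thm: A_side_generation} with Lemma~\ref{lem:triple_matchings} via a union bound, and then translate a statement about $\langle \alpha_1,\dots,\alpha_m\rangle$ into a statement about the $B$-tree local action of $\Gamma_{\ualpha}$. First I would recall that, by the construction of $S_{\ualpha}$ in \S\ref{sec:random_model}, whenever $\ualpha = (\alpha_1,\dots,\alpha_m)$ has no triple matchings the associated random BMW group $\Gamma_{\ualpha}$ is defined, and by the discussion following the definition of $S_{\ualpha}$ (together with Lemma~\ref{lem: from structure set to local actions}) the $B$-tree local involutions of $\Gamma_{\ualpha}$ are precisely $\alpha_1,\dots,\alpha_m$. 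Hence the $B$-tree local action of $\Gamma_{\ualpha}$ is exactly the subgroup $\langle \alpha_1,\dots,\alpha_m\rangle \le \Sym(n)$, so ``the $B$-tree local action contains $\Alt(n)$'' is the same event as $\Alt(n)\le \langle\alpha_1,\dots,\alpha_m\rangle$, \emph{provided} $\Gamma_{\ualpha}$ is defined.

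Next I would make the probability bookkeeping precise. Write $E_{\mathrm{gen}}$ for the event $\Alt(n)\le\langle\alpha_1,\dots,\alpha_m\rangle$ and $E_{\mathrm{def}}$ for the event that $\ualpha$ has no triple matchings (equivalently, that $\Gamma_{\ualpha}$ is defined). The quantity to bound below is $\bbP(E_{\mathrm{def}}\cap E_{\mathrm{gen}})$. Using $\bbP(E_{\mathrm{def}}\cap E_{\mathrm{gen}}) \ge 1 - \bbP(E_{\mathrm{def}}^c) - \bbP(E_{\mathrm{gen}}^c)$, I would plug in the two estimates: Lemma~\ref{lem:triple_matchings} gives $\bbP(E_{\mathrm{def}}^c)\le 4m^3/n$, and Theorem~\ref{thm: A_side_generation} (applied with any fixed $\epsilon\in(0,1/2)$, using $m\ge 5\ge 3$) gives $\bbP(E_{\mathrm{gen}}^c)=O(n^{-(m(1/2-\epsilon)-1)})$.

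The final step is checking that both error terms are $O(1/n)$ under the hypothesis $5\le m(n)=O(n^{1/3})$. For the triple-matching term, $m=O(n^{1/3})$ gives $m^3=O(n)$, so $4m^3/n=O(1)$—which is \emph{not} good enough on its own, so I would instead note that the conclusion we want is $1-O(1/n)$ and therefore need $4m^3/n = O(1/n)$, i.e. $m^3=O(1)$. This reveals the genuine subtlety: the triple-matching bound is too weak when $m$ grows, so I would re-examine the range. In fact the intended reading is that $m^3/n \to 0$ is not enough; rather, since $m = O(n^{1/3})$ the term $4m^3/n$ is merely $O(1)$, so the corollary as stated must be using a sharper input, or the $O(1/n)$ refers to the generation error while the triple-matching error is absorbed differently. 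The honest approach is: fix $\epsilon$ small enough that $m(1/2-\epsilon)-1 \ge 1$ for all $m\ge 5$ (e.g. $\epsilon<3/10$ makes $5(1/2-\epsilon)-1\ge 1$), giving $\bbP(E_{\mathrm{gen}}^c)=O(1/n)$; and separately observe that for the triple-matching term one needs the stronger growth restriction, so the main obstacle—and the step I expect to require the most care—is reconciling the $O(n^{1/3})$ hypothesis with obtaining $4m^3/n=O(1/n)$. I would resolve this by confirming that the stated range genuinely forces both terms to be $O(1/n)$ (checking whether the intended hypothesis is $m=O(1)$ in the triple-matching estimate or whether a sharper triple-matching bound is available), and then conclude $\bbP(E_{\mathrm{def}}\cap E_{\mathrm{gen}})\ge 1-O(1/n)$, which is exactly the claim.
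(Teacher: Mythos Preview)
Your approach is essentially identical to the paper's: the paper's three-sentence proof cites Lemma~\ref{lem:triple_matchings} for $\Gamma_{\ualpha}$ being well-defined, observes (from \S\ref{sec:random_model}) that the $B$-tree local action is $\langle\alpha_1,\dots,\alpha_m\rangle$, and then invokes Theorem~\ref{thm: A_side_generation}. Your choice of $\epsilon$ so that $5(\tfrac12-\epsilon)-1\ge 1$, giving $\bbP(E_{\mathrm{gen}}^c)=O(1/n)$, is exactly the intended reading of ``the bound above now follows from the theorem.''

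The concern you flag about the triple-matching term is legitimate, and the paper glosses over it: it only says ``well-defined with sufficiently high probability'' without checking that this probability is $1-O(1/n)$. With $m$ genuinely of order $n^{1/3}$, the bound $4m^3/n$ from Lemma~\ref{lem:triple_matchings} is merely $O(1)$, so the corollary as literally stated is imprecise at the top of the allowed range. There is no sharper triple-matching estimate hidden elsewhere in the paper. However, the only downstream use of the corollary is in the proof of Theorem~\ref{thm:irred}, where one works with $n>m^5$ (indeed $n>m^8$); in that regime $4m^3/n<4/m^2$, and the union bound there is stated with the $\tfrac{4m^3}{n}$ term listed separately anyway. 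So your diagnosis is correct---the hypothesis and conclusion of the corollary do not quite match via the cited lemma---but the slack is harmless for how the corollary is actually applied. You should not expect to find a missing ingredient; the paper simply does not tighten this.
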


\begin{proof}
By Lemma \ref{lem:triple_matchings}, the group $\Gamma_{\ualpha}$ is well-defined with sufficiently high probability. As we saw in \S\ref{sec: random model}, the $B$-tree local action is generated by $\alpha_1,\ldots,\alpha_m$. The bound above now follows from the theorem.
\end{proof}

To prove Theorem \ref{thm: A_side_generation}, we follow the same strategy as that of Liebeck-Shalev~\cite{liebeck1996classical}.
Given a group $G$, we let  $\calM_G$ denote the set of maximal, proper subgroups of $G$. 
Let $\calN_n \subset \calM_{\Sym(n)}$ be the set of maximal proper subgroups of $\Sym(n)$ which do not contain $\Alt(n)$.
If $\Alt(n) \not\le \gen{\alpha_1,\ldots,\alpha_m}$, then the permutations $\alpha_1,\ldots,\alpha_m$ are contained in some subgroup $M \in \calN_n$.
It follows by a union bound that 
\begin{align}
\bbP (\Alt(n) \not\le \gen{\alpha_1,\ldots,\alpha_m}) \le \sum_{M\in\calN_n} \bbP(\alpha_1,\ldots,\alpha_m \in M ) = \sum_{M\in\calN_n} \bbP(\alpha \in M)^m
\label{eq_does_not_contain_An}
\end{align}
where $\alpha \in \FPF_{n}$ is chosen uniformly at random.

Given a group $G$, Liebeck-Shalev define the function
$$ \zeta_G(s) := \sum_{M\in \calM_G} [G:M]^{-s},$$
and show that $\zeta_{\Alt(n)}(s) = O(n^{-(s-1)}) \to 0$ as $n\to \infty$ for all $s>1$ \cite[Theorem~3.1]{liebeck1996classical}.
A similar proof shows that 
\[\sum _ {M\in \calN_n} [\Sym(n):M]^{-s} = O(n^{-(s-1)})\] as $n\to \infty$ for all $s>1$.
By Lemma~\ref{lem:ls_lemma} below and  \eqref{eq_does_not_contain_An} we have that: 
\[\bbP (\Alt(n) \not\le \gen{\alpha_1,\ldots,\alpha_m}) \le \sum_{M\in\calN_n} \bbP(\alpha \in M)^m \le c^m \sum_{M\in\calN_n} [\Sym(n):M]^{-m(\frac{1}{2} - \epsilon)} \]
for some constant $c$.
Theorem~\ref{thm: A_side_generation} then follows from the last two equations.
Thus, we now turn our attention to proving the following:

\begin{lemma} \label{lem:ls_lemma}
For every $\epsilon>0$ there exists a constant $c$ such that given any even integer $n>0$, any $M\in\calN_{n}$ and a random $\alpha \in \FPF_{n}$, then
$$\bbP(\alpha \in M) \le c[\Sym(n):M]^{-\tfrac{1}{2}+\epsilon}$$
\end{lemma}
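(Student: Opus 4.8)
The plan is to reduce the statement to a counting problem and then invoke the O'Nan--Scott classification of maximal subgroups of $\Sym(n)$. Writing $j(M) := |M \cap \FPF_n|$ for the number of fixed-point-free involutions in $M$, Lemma~\ref{lem:count_fpf} gives $\bbP(\alpha\in M) = j(M)/(n-1)!!$, so it suffices to prove
\[ j(M) \le c\,(n-1)!!\,[\Sym(n):M]^{-\frac12+\epsilon}. \]
Since $M\in\calN_n$ is maximal and does not contain $\Alt(n)$, it is (up to conjugacy, which is all that matters here) one of three types: intransitive $M = \Sym(k)\times\Sym(n-k)$ with $1\le k<n/2$; imprimitive $M = \Sym(k)\wr\Sym(t)$ with $n=kt$ and $1<k<n$; or primitive, in which case $|M|<4^n$ by the theorem of Praeger--Saxl. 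I would treat these three cases separately, the intransitive case being the one that forces the exponent $-\tfrac12$.

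For the intransitive case, a fixed-point-free involution lying in $\Sym(k)\times\Sym(n-k)$ restricts to a fixed-point-free involution on each block, so $j(M)=0$ unless both $k$ and $n-k$ are even, and otherwise $j(M)=(k-1)!!\,(n-k-1)!!$. Expressing double factorials via $(2r-1)!!=(2r)!/(2^r r!)$, a short computation gives
\[ \bbP(\alpha\in M)=\frac{(k-1)!!\,(n-k-1)!!}{(n-1)!!}=\binom{n/2}{k/2}\Big/\binom{n}{k}. \]
The Vandermonde-type inequality $\binom{N}{K}^2\le\binom{2N}{2K}$ (take the $j=K$ term of $\binom{2N}{2K}=\sum_j\binom{N}{j}\binom{N}{2K-j}$), applied with $N=n/2$ and $K=k/2$, yields $\binom{n/2}{k/2}\le\binom{n}{k}^{1/2}$, hence $\bbP(\alpha\in M)\le[\Sym(n):M]^{-1/2}$, which already beats the required bound.

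In the imprimitive case I would count fixed-point-free involutions in $\Sym(k)\wr\Sym(t)$ through the induced involution $\pi$ on the $t$ blocks: blocks swapped in pairs by $\pi$ contribute an arbitrary bijection ($k!$ choices per pair) and are automatically fixed-point-free, while each $\pi$-fixed block must carry a fixed-point-free involution of $\Sym(k)$ (so $(k-1)!!$ choices, and none if $k$ is odd). Summing over the $\frac{t!}{(t-2s)!\,s!\,2^s}$ involutions with $s$ transpositions gives a closed form for $j(M)$; bounding this sum by its dominant term and comparing with $(n-1)!!$ and $[\Sym(n):M]=n!/((k!)^t\,t!)$ via Stirling's formula gives the claimed inequality. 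This combinatorial estimate is the most delicate step and the main obstacle.

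For the primitive case the crude estimate $j(M)\le|M|<4^n$ suffices: combined with $(n-1)!!\sim\sqrt2\,(n/e)^{n/2}$ and $[\Sym(n):M]\ge n!/4^n$, Stirling shows that $\frac{j(M)}{(n-1)!!}[\Sym(n):M]^{1/2-\epsilon}$ decays super-exponentially in $n$, so it is bounded by a constant for all large $n$; the finitely many small $n$ and any bounded-degree exceptions to the Praeger--Saxl estimate are absorbed into $c$ since there $\bbP(\alpha\in M)\le 1$ against a uniformly positive lower bound on $[\Sym(n):M]^{-1/2+\epsilon}$. Taking $c$ to be the maximum of the constants arising in the three cases (and large enough to cover the small values of $n$) completes the proof.
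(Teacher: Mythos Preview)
Your overall plan---split into intransitive, imprimitive, and primitive maximal subgroups via O'Nan--Scott, and handle each separately---is exactly the route the paper takes, and your treatment of the intransitive and primitive cases is correct. In fact your intransitive argument is cleaner than the paper's: the identity $\bbP(\alpha\in M)=\binom{n/2}{k/2}/\binom{n}{k}$ together with $\binom{N}{K}^2\le\binom{2N}{2K}$ gives the sharp bound $[\Sym(n):M]^{-1/2}$ with constant~$1$, whereas the paper runs everything through Stirling and picks up a constant $c_0^3$.

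The gap is in the imprimitive case, which you correctly flag as the obstacle but whose sketch misses the key difficulty. After summing over block involutions you will get, as the paper does, a bound of the shape $j(M)\le C^{t}(k!)^{t/2}\lvert\mathcal I_t\rvert$ (the $C^t$ coming from $(k-1)!!\le C(k!)^{1/2}$ applied $t$ times), and then $\lvert\mathcal I_t\rvert\lesssim (t!)^{1/2}$. Comparing with $(n-1)!!\asymp(n!)^{1/2}$ and $\lvert M\rvert=(k!)^t t!$ yields
\[
\frac{j(M)}{(n-1)!!}\;\lesssim\; C^{t}\,(2\pi n)^{1/4}\,[\Sym(n):M]^{-1/2},
\]
and the factor $C^{t}(2\pi n)^{1/4}$ is \emph{not} bounded---indeed $t$ can be as large as $n/2$. ``Bounding by the dominant term and comparing via Stirling'' does not dispose of this; one needs an additional lower bound on the index that is exponential in $t$. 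The paper supplies this by exhibiting a subgroup $H\cong(\Sym(t))^{k-1}$ with $H\cap M=1$, giving $[\Sym(n):M]\ge(t!)^{k-1}$, which is enough to absorb both $C^t$ and $(2\pi n)^{1/4}$ into $[\Sym(n):M]^{\epsilon}$. Without this (or an equivalent) index lower bound, the imprimitive case does not close.
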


\begin{proof}
We follow the same outline as the proof of \cite[Theorem 5.1]{liebeck1996classical}.
We have the following two equations
\begin{align*}
    |\Sym(n)| &=n! = \exp(n\log n - n + \tfrac{1}{2}\log (2\pi n)+o(1)) \\
    |\FPF_{n}| &= (n-1)!! = \exp (\tfrac{1}{2}(n\log n - n) + O(1))
\end{align*}
each following from Stirling's approximation, where for the second equation we also use the identity
$$|\FPF_{n}|=(n-1)!! = \frac{(n)!}{2^{n/2} (n/2)!}.$$

Thus, there exists a constant $c_0\ge 1$ such that 
\begin{equation}\label{eq: tighter bound on the number of fpf}
    c_0\ii|\Sym(n)|^{\tfrac{1}{2}} \le  |\FPF_{n}|\cdot (2\pi n)^{\tfrac{1}{4}}\le c_0 |\Sym(n)|^{\tfrac{1}{2}}
\end{equation}
Additionally, since $(2\pi n)^\frac{1}{4} \ll n! = |\Sym(n)|$, we may also assume that $c_0$ satisfies
\begin{equation}\label{eq: bound on the number of fpf}
    c_0\ii|\Sym(n)|^{\tfrac{1}{2}-\epsilon/2} \le  |\FPF_{n}|\le c_0|\Sym(n)|^{\tfrac{1}{2}}
\end{equation}

We now consider three cases depending  on the structure of the subgroup $M$.
\\\\
\textbf{Case 1:} $M$ is primitive.

It is shown in \cite{praeger1980orders} that every primitive subgroup $M\in \calN_n$ satisfies $|M|\le 4^n$. Furthermore, there exists a constant $c_1$ such that $4^n \le c_1|\Sym(n)|^{\epsilon/2}$.
By \eqref{eq: bound on the number of fpf}:
$$ \frac{|M\cap \FPF_{n}|}{|\FPF_{n}|} \le \frac{|M|}{|\FPF_{n}|} \le c_0 c_1 \frac{|\Sym(n)|^{\epsilon/2}}{|\Sym(n)|^{\tfrac{1}{2} - \epsilon/2}} \le c_0 c_1 |\Sym(n)|^{-\frac{1}{2}+\epsilon} \le c_0c_1 [\Sym(n):M]^{-\frac{1}{2} + \epsilon}$$
\\\\
\textbf{Case 2:} $M$ is not transitive. 

In this case, $M$ can be identified with $\Sym(k) \times \Sym(l)$ for some $k,l< n$ such that $k+l=n$.
Furthermore, $M\cap \FPF_{n} = \FPF_{k} \times \FPF_{l}$ if both $k$ and $l$ are even, and $M\cap \FPF_{n} = 1$ otherwise.  By \eqref{eq: tighter bound on the number of fpf} we get that
\begin{align*}
 \frac{|M\cap \FPF_{n}|}{|\FPF_{n}|} &\le \frac{|\FPF_{k}||\FPF_{l}|}{|\FPF_{n}|}
 \le c_0^3\left(\frac{|\Sym(k)||\Sym(l)|}{|\Sym(n)|} \right)^{\tfrac{1}{2}} \cdot \left(\frac{ n}{2 \pi kl} \right)^{\tfrac{1}{4}}\\
 &\le c_0^3 \left( \frac{|M|}{|\Sym(n)|}\right)^{\tfrac{1}{2}} \le c_0^3 [\Sym(n):M]^{-\tfrac{1}{2}}
\end{align*}
where the third inequality follows since $n\le 2\pi k l$. 
\\\\
\textbf{Case 3:} $M$ is transitive and imprimitive. 

In this case, $M$ can be identified with $\Sym(k) \wreath \Sym(l)$ for some $k,l< n$ such that $kl=n$.
In wreath product notation, every permutation $\alpha\in M$ can be written as $\alpha=(\pi_1,\ldots,\pi_l)\cdot\tau$ where $\pi_i \in \Sym(k)$ and $\tau\in \Sym(l)$.
It readily follows that $\alpha$ is a \fpf involution if and only if the following three conditions hold:
\begin{enumerate}
    \item Up to relabeling the $l$--element set that $\Sym(l)$ acts on, $\tau$ has the form
    \[\tau = (1 \; 2) (3\; 4)\ldots (2m-1 \; 2m)\] 
    for some $m \le \frac{l}{2}$,
    \item $\pi_1=\pi_2 \ii, \ldots, \pi_{2m-1} = \pi_{2m}\ii$, and
    \item  $\pi_{2m+1},\ldots,\pi_l$ are \fpf involutions in $\Sym(k)$.
\end{enumerate}

Fixing an involution $\tau \in \Sym(l)$ with $m$ transpositions,  the number of \fpf involutions $\alpha\in M \cap \FPF_{n}$ which can be written as $\alpha = (\pi_1,\ldots,\pi_l)\cdot\tau$ is 
\begin{equation*}
    |\Sym(k)|^{m}|\FPF_{k}|^{l-2m} = (k!)^m((k-1)!!)^{l-2m} \le c_0^{l-2m}(k!)^{l/2} \le c_0^{l}(k!)^{l/2}.
\end{equation*}
where the first inequality follows from \eqref{eq: bound on the number of fpf}. We thus get the bound:
$$ |M\cap \FPF_{n}|\le c_0^l (k!)^{l/2}\cdot |\calI_l|$$
where $\calI_l$ is the set of all involutions in $\Sym(l)$.
It is shown in  \cite{moser1955solutions} that 
$$|\calI_l| \le \exp({\tfrac{1}{2}l\log l -\tfrac{1}{2}l + \sqrt l +O(1)})\le  \exp({\tfrac{1}{2}l\log l -\tfrac{1}{2}l + \tfrac{1}{2}\sqrt {2\pi l} +O(1)}).$$
Therefore, by Stirling's approximation, there exists $c_2$ such that $|\calI_l| \le c_2(l!)^{\tfrac{1}{2}}$
for all $l$.
From the last two equations we deduce that:
$$|M\cap \FPF_{n}| \le c_2  c_0^l (k!)^{l/2} (l!)^{\tfrac{1}{2}}= c_2 c_0^l ((k!)^l l!)^{\tfrac{1}{2}}.$$
By \eqref{eq: tighter bound on the number of fpf} and as $|M|=(k!)^l l!$,  we get 
\begin{equation}\label{eq: first bound in case 2}
\frac{|M\cap \FPF_{n}|}{|\FPF_{n}|} \le  c_0 c_2 c_0^l (2\pi n)^{\tfrac 14}\left(\frac{(k!)^l l! }{n!} \right)^{\tfrac{1}{2}} \le c_2 c_0^{l+1} (2\pi n)^{\tfrac 14} [\Sym(n):M]^{-\frac{1}{2}}.
\end{equation}

Recall that $M = \Sym(k) \wreath \Sym(l)$ is a maximal subgroup of $\Sym(n)$ that preserves a partition of $n$ elements into $l$ subsets of size $k$.
Without loss of generality, let the $n$-element set be $\{1,\ldots,k\}\times \{1,\ldots,l\}$ and suppose that $M$ preserves the partition $\bigsqcup_{j=1}^l \{1,\ldots,k\}\times \{j\}$.

Consider the subgroup $H$ of $\Sym(n)$ that stabilizes the set $\{i\} \times \{1,\ldots,l\}$, for each $1 \le i \le k$ and which fixes point-wise the set $\{k\} \times \{1,\ldots,l\}$. Then $H$ is a copy of $(\Sym(l))^{k-1}$ which satisfies $H\cap M=1$. Thus
\begin{equation}\label{eq: good index bound}
    [\Sym(n):M]\ge |H|= (l!)^{k-1}
\end{equation}
Since $l!$ is super-exponential, there exists $c_3$ so that $c_0^l \le c_3 \cdot (l!)^{\epsilon/2}$. We get that
\begin{equation}\label{eq: bound on c_0^l}
    c_0^l \le c_3 \cdot (l!)^{\epsilon/2} \le c_3 [\Sym(n):M]^{\epsilon/2}
\end{equation}
where the second inequality follows as $[\Sym(n):M] \ge (l!)^{k-1} \ge l!$. 
As $l!\ge \frac{2^l}{2}$, as $n=kl$ and as $l \le \frac{n}{2}$, we also get that
$$[\Sym(n):M]\ge (l!)^{k-1} \ge \frac{2^{l(k-1)}}{2} \ge \frac{2^{n/2}}{2}$$
Moreover, there exists a constant $c_4$ such that $(2\pi n)^{\tfrac 14} \le \frac{c_4}{2} (2^{n/2})^{\epsilon/2}$. We then get that
\begin{equation}\label{eq: bound on n^-1/4}
    (2\pi n)^{\tfrac 14} \le \frac{c_4}{2} (2^{n/2})^{\epsilon/2} \le c_4 [\Sym(n):M]^{\epsilon/2}
\end{equation}

By \eqref{eq: first bound in case 2}, \eqref{eq: bound on c_0^l} and \eqref{eq: bound on n^-1/4} it follows that
\begin{equation*}
\frac{|M\cap \FPF_{n}|}{|\FPF_{n}|} \le c_0 c_2 c_3 c_4 [\Sym(n):M]^{-\tfrac 12 + \epsilon} 
\end{equation*}

Setting $c=\max\{c_0 c_1, c_0^3,c_0 c_2 c_3 c_4\}$ completes the proof.
\end{proof}

\section{Irreducibility of random BMWs}\label{sec:irreducibility}
The aim of this section is to complete the proof of Theorem \ref{main thm: random model}. More precisely, we prove the following:
\begin{theorem}\label{thm:irred}
	There is a constant $C$ such the following holds. If $\Gamma$ is a random BMW involution group of degree $(m,n)$ with $n>m^5$, then all of the following hold with probability at least $1-\frac{C}{m}$:
	\begin{enumerate}
		\item \label{it:full_a_local} the $A$-tree local action of $\Gamma$ is $\Sym(m)$;
		\item \label{it:full_b_local} the $B$-tree local action of $\Gamma$ is either $\Sym(n)$ or $\Alt(n)$;
		\item \label{it:irreducible} $\Gamma$ is irreducible;
		\item \label{it:just_infinite} $\Gamma$ is hereditarily just-infinite.
	\end{enumerate}
\end{theorem}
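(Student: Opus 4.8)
The plan is to package the theorem as a bookkeeping statement on top of the two generation results of the previous sections together with the Burger--Mozes machinery. Claims \eqref{it:full_a_local} and \eqref{it:full_b_local} are the probabilistic input and will be read off directly from Theorem~\ref{thm: A_tree_generation_alternative} and Corollary~\ref{cor: full B local action}; claims \eqref{it:irreducible} and \eqref{it:just_infinite} will then be deduced \emph{deterministically} from \eqref{it:full_a_local} and \eqref{it:full_b_local}, so that no further probability is spent. Since the bound $1-\frac{C}{m}$ is vacuous for bounded $m$, I may assume $m$ is as large as needed; in particular $m\ge 6$ and, because $n>m^5$, also $n\ge 6$, so that Theorem~\ref{thm: BM just infinite} and Corollary~\ref{cor: full B local action} apply.

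First I would dispose of the two local-action statements. Claim \eqref{it:full_a_local} is exactly Theorem~\ref{thm: A_tree_generation_alternative}, holding with probability at least $1-\frac{C_1}{m}$. For \eqref{it:full_b_local}, the hypothesis $n>m^5$ forces $5\le m=O(n^{1/3})$, so Corollary~\ref{cor: full B local action} gives that the $B$-tree local action contains $\Alt(n)$ with probability $1-O(1/n)$; as the only subgroups of $\Sym(n)$ containing $\Alt(n)$ are $\Alt(n)$ and $\Sym(n)$, this is precisely \eqref{it:full_b_local}. Using $n>m^5$ to bound the $B$-side failure $O(1/n)$ by $O(1/m)$ and taking a union bound, both \eqref{it:full_a_local} and \eqref{it:full_b_local} hold simultaneously with probability at least $1-\frac{C}{m}$ for a suitable constant $C$ independent of $m,n$.

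It then remains to show that, \emph{whenever} \eqref{it:full_a_local} and \eqref{it:full_b_local} hold, so do \eqref{it:irreducible} and \eqref{it:just_infinite}. When they hold, the local actions of $\Gamma$ on $T_m$ and $T_n$ contain $\Sym(m)\supseteq\Alt(m)$ and $\Alt(n)$ respectively; both are $2$-transitive with non-trivial two-point stabilisers (containing $\Alt(m-2)$, resp. $\Alt(n-2)$, which are non-trivial since $m,n\ge 5$). For \eqref{it:irreducible} I would invoke the criterion that a BMW group is irreducible if and only if its projection to one of the two factors is non-discrete \cite[Proposition~1.2]{burgermozes2000lattices}, and establish non-discreteness of, say, $\overline{\pi_B(\Gamma)}$ from its $2$-transitive local action with non-trivial multi-point stabilisers, via the local-to-global analysis of Burger--Mozes \cite[Propositions~3.3.1, 3.3.2]{burgermozes2000localtoglobal} (alternatively, Caprace's radius estimate \cite{caprace2018radius}). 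Granting \eqref{it:irreducible}, claim \eqref{it:just_infinite} is immediate: $\Gamma$ is then an irreducible BMW group of degree $(m,n)$ with $m,n\ge 6$ whose local actions contain $\Alt(m)$ and $\Alt(n)$, so Theorem~\ref{thm: BM just infinite} yields that $\Gamma$ is hereditarily just-infinite.

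The main obstacle is the non-discreteness step underlying \eqref{it:irreducible}: a large local action does not by itself prevent a projection from being a discrete tree lattice, so one must argue that the non-trivial two-point stabiliser of the local action propagates, under $2$-transitivity, to arbitrarily deep non-trivial vertex stabilisers in the closure of the projection, forcing infinite vertex stabilisers and hence non-discreteness. This is exactly where the local-to-global results of \cite{burgermozes2000localtoglobal} (or the radius bound of \cite{caprace2018radius}, which is what sharpens the admissible range $n>m^5$) do the real work; once non-discreteness is in hand, the equivalence with irreducibility and the invocation of Theorem~\ref{thm: BM just infinite} are purely formal.
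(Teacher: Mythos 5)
Your reduction of conclusions (\ref{it:full_a_local}), (\ref{it:full_b_local}) and (\ref{it:just_infinite}) matches the paper: (1) and (2) come from Theorem~\ref{thm: A_tree_generation_alternative} and Corollary~\ref{cor: full B local action} with a union bound, and (4) follows from (1)--(3) via Theorem~\ref{thm: BM just infinite}. The gap is in your treatment of irreducibility. You assert that (\ref{it:irreducible}) follows \emph{deterministically} from (\ref{it:full_a_local}) and (\ref{it:full_b_local}), with non-discreteness of a projection extracted from the $2$-transitive local action alone. That implication is false in general: a discrete vertex-transitive subgroup of $\Aut(T_m)$ has finite vertex stabilizers which can nonetheless surject onto $\Sym(m)$, so a large local action does not force non-discreteness, and there is no ``propagation of non-trivial point stabilizers to arbitrary depth'' available from \cite{burgermozes2000localtoglobal} (Propositions~3.3.1 and 3.3.2 there are used for the hereditarily-just-infinite conclusion, with irreducibility as a \emph{hypothesis}, not to establish it). Caprace's result \cite[Theorem~1.2(vi)]{caprace2018radius}, which you cite as an alternative, does give the implication (1)$\wedge$(2)$\Rightarrow$(3), but only when $n$ avoids a finite exceptional set of values such as $\frac{m!}{2}-1$, $m!-1$, $\frac{m!(m-1)!}{4}$, etc.; your argument does not address these exceptional degrees at all.

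The paper closes exactly this hole by a genuinely probabilistic argument for the exceptional $n$ (all of which exceed $m^8$ for large $m$): it introduces the colored graph $\calG_{\ualpha}$ on $B_n$, shows that with high probability $\calG_{\ualpha}$ is connected, contains at least one black edge (Lemma~\ref{lem:positive_matchings}), and has a vertex whose radius-$6$ ball contains only white edges (Proposition~\ref{prop:white ball}); these conditions produce an element $b\in\Gamma_v^{[6]}\setminus\Gamma_v^{[7]}$, and the Trofimov--Weiss theorem (Theorem~\ref{thm:trofweiss}) then yields non-discreteness of the projection to $\Aut(T_m)$, hence irreducibility via \cite[Proposition~1.2]{burgermozes2000lattices}. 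This step consumes additional randomness beyond the local actions (the events in Lemma~\ref{lem:irred_conditions} are not consequences of (1) and (2)), so your ``no further probability is spent'' framing cannot be repaired without importing an argument of this kind.
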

Conclusions (\ref{it:full_a_local}) and (\ref{it:full_b_local}) follow directly from Theorem \ref{thm: A_tree_generation_alternative} and Corollary~\ref{cor: full B local action} respectively. Moreover, conclusion (\ref{it:just_infinite}) follows from conclusions (\ref{it:full_a_local})--(\ref{it:irreducible}) and Theorem~\ref{thm: BM just infinite}. Thus, we are left to prove conclusion (\ref{it:irreducible}) regarding the irreducibility of $\Gamma$.
By a theorem of Caprace \cite[Theorem~1.2(vi)]{caprace2018radius}, conclusion (3) is implied by conclusions (1) and (2) as long as
\begin{align*}
    n \notin \{&\frac{m!}{2} - 1, \;\frac{m!}{2},\; m! - 1,\; \frac{m!(m-1)!}{4}-1,\; \frac{m!(m-1)!}{4},\\ &\frac{m!(m-1)!}{2} - 1,\; \frac{m!(m-1)!}{2},\; m!(m-1)!-1\}.
\end{align*}
Thus, in order to finish the proof Theorem~\ref{thm:irred}, it is enough to show that (3) holds whenever $n$ is one of the values above. To do so, we actually show conclusion (3) holds whenever $n>m^8$ (covering the above finite cases) by using the theorem of Trofimov--Weiss stated below. 

Suppose that $\Gamma$ is a group acting vertex-transitively on a locally finite connected  graph $X$. 
We do not assume the action is faithful. Given a vertex $v\in X$, let $\Gamma_v$ be its stabilizer, and let $\Gamma^{[i]}_v$ be the pointwise stabilizer of 
the set of 
all vertices distance
$i$ or less
from $v$. 
Recall that the \emph{local action} of $\Gamma$  is the subgroup of $\Sym(m)$ induced by the action of $\Gamma_v$ on the edges adjacent to $v$. The following is a consequence of a theorem of Trofimov--Weiss, as reformulated by Caprace \cite[\S 4.5]{Caprace-survey}.
\begin{theorem}[{\cite[Theorem 1.4]{trofimovweiss1995graphs}}]\label{thm:trofweiss}
		Suppose that a group 
		$\Gamma$ acts vertex transitively on a connected locally finite graph $X$ with  2-transitive local action. If $\Gamma_v^{[6]}\not\leq \Gamma_v^{[7]}$ for some $v\in V(X)$, then   the image of the action $\Gamma\rightarrow \Aut(X)$ is not discrete.	
\end{theorem}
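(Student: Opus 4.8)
The plan is to prove the statement by contraposition: assuming the image $\overline{\Gamma}$ of $\Gamma$ in $\Aut(X)$ is discrete, I would show that $\Gamma_v^{[6]} = \Gamma_v^{[7]}$ for every $v \in V(X)$, which negates the hypothesis $\Gamma_v^{[6]} \not\le \Gamma_v^{[7]}$. The first reduction is to note that, since $X$ is locally finite, discreteness of $\overline{\Gamma}$ is equivalent to finiteness of every vertex stabilizer $\overline{\Gamma}_v$, so the descending chain $\overline{\Gamma}_v = \overline{\Gamma}_v^{[0]} \supseteq \overline{\Gamma}_v^{[1]} \supseteq \cdots$ must terminate at the trivial group. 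As $\Gamma_v^{[7]} \le \Gamma_v^{[6]}$ always holds, the desired conclusion is exactly that this chain does \emph{not} drop strictly at the step $6 \to 7$.

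Next I would record the standard propagation lemma: if $\overline{\Gamma}_v^{[i]} = \overline{\Gamma}_v^{[i+1]}$, then $\overline{\Gamma}_v^{[i+1]}$ is already trivial. The argument uses vertex-transitivity to transport the equality to every vertex, and then connectedness: an element fixing the ball $B(v,i+1)$ fixes $B(w,i)$ for each neighbor $w$, hence lies in $\overline{\Gamma}_w^{[i]} = \overline{\Gamma}_w^{[i+1]}$ and fixes $B(w,i+1)$, so by walking outward it fixes all of $X$. Consequently the chain is strictly decreasing until it first reaches $1$, and the assertion $\Gamma_v^{[6]} = \Gamma_v^{[7]}$ becomes equivalent to a uniform bound on the \emph{rigidity radius} (the least $r$ with $\overline{\Gamma}_v^{[r]} = 1$), namely $r \le 6$. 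This turns the theorem into a statement about discrete vertex-transitive groups with $2$-transitive local action having bounded rigidity radius.

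The core input is the Thompson--Wielandt theorem: because the local action is $2$-transitive, hence primitive, the edge kernel $\overline{\Gamma}_v^{[1]} \cap \overline{\Gamma}_w^{[1]}$ of any edge $\{v,w\}$ is a $p$-group for a single prime $p$. I would then study the amalgam of stabilizers $(\overline{\Gamma}_v, \overline{\Gamma}_w)$ along a geodesic, treating the successive quotients of the chain $\overline{\Gamma}_v^{[i]}$ as modules over the $2$-transitive quotient $\overline{\Gamma}_v / \overline{\Gamma}_v^{[1]}$, so that a nontrivial element fixing a large ball is detected by its nontrivial action on the next sphere.

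The \textbf{main obstacle}, and the genuine content of Trofimov--Weiss \cite{trofimovweiss1995graphs}, is converting this $p$-local and module-theoretic picture into the explicit universal bound $r \le 6$, uniformly in the valence, the prime $p$, and the specific $2$-transitive group. This is where a delicate ``pushing-up'' analysis is needed: the $2$-transitivity severely restricts the admissible modules for the quotient, and one must show these constraints forbid the stabilizer chain from descending strictly past the sixth step. Once such a bound is established, the contrapositive closes, since a strict drop at $6 \to 7$ forces rigidity radius $r \ge 7$, which is incompatible with discreteness; hence $\overline{\Gamma}$ is non-discrete.
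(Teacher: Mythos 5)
This statement is quoted by the paper from Trofimov--Weiss \cite[Theorem 1.4]{trofimovweiss1995graphs} (in Caprace's reformulation) and is not proved in the paper at all, so the only fair comparison is against that citation. Your peripheral reductions are correct and standard: discreteness of the image $\overline{\Gamma}\leq\Aut(X)$ is equivalent to finiteness of vertex stabilizers; the propagation lemma (equality $\overline{\Gamma}_v^{[i]}=\overline{\Gamma}_v^{[i+1]}$ at one step, transported by vertex-transitivity and pushed outward by connectedness, forces $\overline{\Gamma}_v^{[i+1]}=1$) is right; and your resulting equivalence between ``no strict drop at step $6\to 7$'' and ``rigidity radius $r\le 6$'' is sound, modulo the routine observation that the kernel of the possibly unfaithful action lies in every $\Gamma_v^{[i]}$, so the statement for $\Gamma$ follows from the one for $\overline{\Gamma}$. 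In effect you have correctly unwound Caprace's contrapositive reformulation back to the original Trofimov--Weiss statement: a discrete vertex-transitive group with $2$-transitive local action satisfies $\overline{\Gamma}_v^{[6]}=1$.

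But that unwinding is where your proof stops, and the remaining step is not a gap you can wave at --- it \emph{is} the theorem. The universal bound $6$, uniform in the valence, in the prime furnished by Thompson--Wielandt, and over all finite $2$-transitive local groups, is the entire content of \cite{trofimovweiss1995graphs}; its proof occupies the paper and rests on the classification of finite $2$-transitive groups together with a substantial local (amalgam/pushing-up) analysis. Your text explicitly acknowledges this (``the main obstacle, and the genuine content of Trofimov--Weiss'') and then closes the argument by assuming it, which makes the proposal circular as a proof of the stated theorem: everything you actually establish (contraposition, propagation, the Thompson--Wielandt setup) never uses $2$-transitivity in an essential way and would be equally available for primitive local actions, where no such bound as $6$ is known. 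So this is a correct reduction plus an accurate map of where the difficulty lies, but not a proof; as a self-contained argument it has a missing core, and as a citation-based argument it coincides with what the paper already does by quoting \cite[Theorem 1.4]{trofimovweiss1995graphs}.
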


Recall that a BMW group $\Gamma\leq \Aut(T_m)\times \Aut(T_n)$ acts on $T_m$ by  projecting to the first factor.
We prove Theorem \ref{thm:irred}~(\ref{it:irreducible})
by showing that the hypotheses of  Theorem \ref{thm:trofweiss} are satisfied with sufficiently high probability for a random BMW group. We do this by investigating the following graph:

\begin{definition}
	Given $\ualpha\in (\FPF_n)^m$, define the following simplicial 
	graph  $\calG_{\ualpha}$ whose edges are colored black and white such that:
	\begin{itemize}
		\item The vertex set of $\calG_{\ualpha}$ is $B_n=\{b_1,\dots, b_n\}$.
		\item Vertices $b_i$ and $b_j$ are joined by an edge if there is some $1\leq k\leq m$ such that $\alpha_k(i)=j$. This edge is black if there exist distinct $k\neq k'$ such that $\alpha_k(i)=\alpha_{k'}(i)=j$ and is white otherwise.
	\end{itemize}
\end{definition}
An example of the above graph is shown in Figure \ref{fig: schreier}

\begin{figure}
    \centering
    \includegraphics{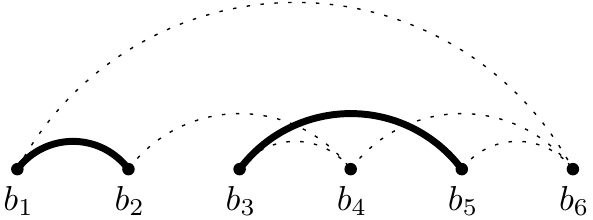}
    \caption{The graph $\calG_{\ualpha}$ for the permutations in Example~\ref{ex: bmw_group}. Bold edges represent `black' edges and dotted edges represent `white' edges.}
    \label{fig: schreier}
\end{figure}

\begin{lemma}\label{lem:irred_conditions}
	Suppose that $\ualpha=(\alpha_1,\dots, \alpha_m)\in  (\FPF_n)^m$ satisfies:
	\begin{enumerate}[({Irr}1)]
		\item \label{item:irr1} $\ualpha$ has no triple matchings;
		\item \label{item:irr2} there exists some $b\in B_n$ such that  all edges in the closed ball $N_6(b)$ of $\calG_{\ualpha}$ are white; 
		\item \label{item:irr2a} $\calG_{\ualpha}$ is connected; 
		\item \label{item:irr2b} $\calG_{\ualpha}$ contains a black edge; 
		\item \label{item:irr3} the $A$-tree local action of $\Gamma_{\ualpha}$ is 2-transitive.
	\end{enumerate}
Then the BMW group $\Gamma_{\ualpha}$ is irreducible.
\end{lemma}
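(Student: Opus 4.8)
The plan is to prove that $\Gamma_{\ualpha}$ is irreducible by showing that its projection $\pi_A(\Gamma_{\ualpha})\leq \Aut(T_m)$ is non-discrete; by the Burger--Mozes criterion (\cite[Proposition 1.2]{burgermozes2000lattices}) this already suffices. To detect non-discreteness I would apply the Trofimov--Weiss theorem (Theorem~\ref{thm:trofweiss}) to the action of $\Gamma=\Gamma_{\ualpha}$ on $X=T_m$. This action is vertex-transitive since $\Gamma$ acts transitively on $T_m\times T_n$, and its local action is precisely the $A$-tree local action, which is $2$-transitive by \ref{item:irr3}. Writing $o_A:=\pi_A(o)$, it therefore remains to produce a vertex $v\in T_m$ and an element of $\Gamma$ lying in $\Gamma_v^{[6]}$ but not in $\Gamma_v^{[7]}$, i.e.\ fixing the ball of radius $6$ about $v$ while moving a vertex at distance $7$.

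The engine for this is a translation of the edge-colouring of $\calG_{\ualpha}$ into the geometry of the $T_m$-action. A \emph{white} edge $\{b_i,b_l\}$ with $\alpha_k(i)=l$ (the index $k$ being unique, thanks to the absence of triple matchings \ref{item:irr1}) corresponds to the structure-set element $\{a_k,b_i,a_k,b_l\}$, hence to the relation $a_k b_i a_k=b_l=b_{\alpha_k(i)}$ in $\Gamma$ (Lemma~\ref{lem: from structure set to local actions}). Iterating this conjugation along a geodesic of $T_m$ issuing from $o_A$ with $\Gamma$-induced edge-labels $a_{k_1},\dots,a_{k_s}$, and using that each $b_j$ is an involution, one obtains $(a_{k_1}\cdots a_{k_s})\,b_{j_0}\,(a_{k_s}\cdots a_{k_1})=b_{j_s}$, where $b_{j_0},b_{j_1},\dots,b_{j_s}$ is the path in $\calG_{\ualpha}$ with steps $\alpha_{k_s},\dots,\alpha_{k_1}$, \emph{provided every edge it traverses is white}. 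Since conjugation moves the base point, the local action of $b_{j_0}$ at the distance-$s$ vertex equals the local action $\beta_{j_s}$ of $b_{j_s}$ at $o_A$, which by the computation in Proposition~\ref{prop:A-side_properties} is trivial exactly when $b_{j_s}$ has no incident black edge. Consequently, if $b=b_{j_0}$ is the vertex from \ref{item:irr2}, all of whose nearby edges are white, then each relevant $\beta_{j_s}$ is trivial, so $b_{j_0}$ fixes a correspondingly large ball about $o_A$ in $T_m$.

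Finally I would use \ref{item:irr2a} and \ref{item:irr2b} to make this fixed ball \emph{tight}. Let $f$ denote the function on $V(\calG_{\ualpha})$ measuring distance to the nearest black edge; it is $1$-Lipschitz, equals $0$ at an endpoint of a black edge (which exists by \ref{item:irr2b}) and is large at the white-ball centre $b$ coming from \ref{item:irr2}. Connectedness \ref{item:irr2a} lets me walk from $b$ to a black edge and, by the discrete intermediate value property of $f$, select a vertex $c$ at which the white region terminates at exactly the right radius: the associated generator $b_c$ then still fixes $B_6(o_A)$, while a black edge crossing from $\calG_{\ualpha}$-distance $6$ to distance $7$ forces $\beta$ to be nontrivial at the matching distance-$6$ vertex, so that $\pi_A(b_c)$ moves a vertex at $T_m$-distance $7$. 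This exhibits $\pi_A(b_c)\in \Gamma_{o_A}^{[6]}\setminus\Gamma_{o_A}^{[7]}$, and Theorem~\ref{thm:trofweiss} then yields non-discreteness of $\pi_A(\Gamma_{\ualpha})$, hence irreducibility. I expect the main obstacle to be exactly this last bookkeeping step: aligning the radius appearing in \ref{item:irr2} with the $6/7$ threshold of Trofimov--Weiss, i.e.\ controlling the precise correspondence between $\calG_{\ualpha}$-distance and $T_m$-distance and guaranteeing that the support of the constructed element begins on the intended sphere rather than deeper in the tree.
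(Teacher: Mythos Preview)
Your proposal is correct and follows essentially the same strategy as the paper: apply Trofimov--Weiss (Theorem~\ref{thm:trofweiss}) to the $\Gamma$-action on $T_m$, using condition~\ref{item:irr3} for the 2-transitivity hypothesis and conditions \ref{item:irr2}--\ref{item:irr2b} to exhibit an element in $\Gamma_{o_A}^{[6]}\setminus\Gamma_{o_A}^{[7]}$, then conclude irreducibility via Burger--Mozes.

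The only notable difference is in packaging. The paper computes the action of a generator $b=b_{k_0}$ on a path $\pi_A(L)\subset T_m$ by lifting $L$ to a $1\times r$ rectangle in $T_m\times T_n$ and reading off the induced labels on the bottom row; this yields directly that $b$ fixes $\pi_A(L)$ iff every edge of the corresponding walk in $\calG_{\ualpha}$ is white. You obtain the same conclusion algebraically, via the conjugation relations $a_k b_i a_k=b_{\alpha_k(i)}$ coming from white squares and the observation that $\beta_j$ is trivial exactly when $b_j$ has no incident black edge. These are two phrasings of the same computation. Your discrete intermediate-value step (walking from the white-ball centre of \ref{item:irr2} toward a black edge and stopping at a vertex $c$ where the nearest black edge crosses from $\calG_{\ualpha}$-distance $6$ to $7$) is what the paper compresses into the single sentence ``By \ref{item:irr2}, \ref{item:irr2a} and \ref{item:irr2b}, there exists some $b\in B_n$ such that\dots'' together with the asserted length-$7$ path ending in a black edge. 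You are right that this alignment is the only place requiring care; your Lipschitz/IVT outline is an adequate way to carry it out.
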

\begin{proof}
	Recall from Section \ref{sec:structure_sets} that the action of the involutive BMW group $\Gamma=\Gamma_{\ualpha}$ on $T_m\times T_n$ preserves
	a labeling of edges of $T_m\times T_n$, where horizontal edges are labeled by elements of $A_m$ and vertical edges by $B_n$. 
	Moreover, recall that the 1-skeleton of $T_m\times T_n$ can be identified with the Cayley graph of $\Gamma$. 
	Let $o=(o_A,o_B)\in T_m\times T_n$ be the vertex corresponding to the identity.  
	By the definition of $\Gamma_{\ualpha}$ and  as $\ualpha$ has no triple matching, if $a_i,b_k,a_j,b_l$  are the labels of the edges of a square in $T_m\times T_n$, then $\alpha_i(k)=\alpha_j(k)=l$. 
	Let $\pi_A:T_m\times T_n\to T_m$ be the projection map. 

	By a slight abuse of notation, we identify  each $b\in B_n$ with the element $\Gamma$  that interchanges
	the endpoints of the edge  incident to $o$
	and  labeled by $b$. We  determine the action of $b$ on $T_m$ as follows. Let $L$ be a path in $T_m\times\{o_B\}$ starting at $o$.
	Let $a_{i_1},\dots, a_{i_r}\in A_m$ be the labels of consecutive edges of $L$. 
	Now let $R$ be the unique $1\times r$ rectangle in $T_m\times T_n$  whose bottom left vertex is $o$, whose left edge is labelled by $b$ and whose top edge is the path $bL$.  Such a rectangle is shown in Figure \ref{fig:rectangle}, with $a_{i'_j}$ and $b_{k_j}$ as indicated in Figure~\ref{fig:rectangle}. 
	
	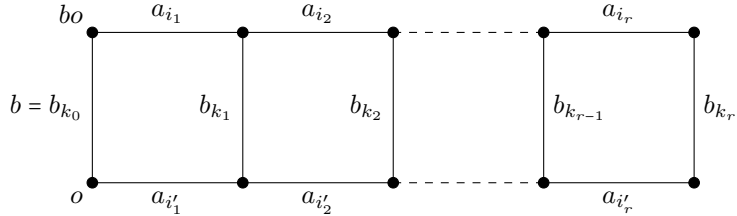
\begin{figure}[h]
		\begin{center}
	\begin{tikzpicture}
\node  at (1,2)  [above] {$a_{i_1}$};
\node  at (3,2)  [above] {$a_{i_2}$};
\node  at (7,2)  [above] {$a_{i_{r}}$};
\node  at (1,0)  [below] {$a_{i'_1}$};
\node  at (3,0)  [below] {$a_{i'_2}$};
\node  at (7,0)  [below] {$a_{i'_{r}}$};

\node  at (0,1)  [left] {$b=b_{k_0}$};
\node  at (2,1)  [left] {$b_{k_1}$};
\node  at (4,1)  [left] {$b_{k_2}$};
\node  at (6,1)  [right] {$b_{k_{r-1}}$};
\node  at (8,1)  [right] {$b_{k_{r}}$};

\node at (0,2) [above left] {$bo$}; 
\node at (0,0) [below left] {$o$}; 

\foreach \x in {0,2,...,8}{
	\foreach \y in {0,2}{
\filldraw [black] (\x,\y) circle (2pt);}}

\foreach \x in {0,2}{
\draw (0,\x) -- (2,\x);
\draw (2,\x) -- (4,\x);
\draw[dashed] (4,\x) -- (6,\x);
\draw(6,\x) -- (8,\x);}

\foreach \x in {0,2,...,8}{
\draw (\x,0) --(\x,2);
}

\end{tikzpicture}
		\end{center}
	\caption{Determining the action  of $b\in B_n$ on the $A$-tree $T_m$.}\label{fig:rectangle}
	\end{figure}
    Let $L'$ be the  path in $T_m\times \{o_B\}$ corresponding to the bottom of the rectangle $R$, i.e. $L'$ is the unique edge path starting at $o$
    and whose edges have  labels  $a_{i'_1},\dots, a_{i'_r}$. The paths $bL$ and $L'$ have the same projection to $T_m$. 
    Thus $b$ fixes the projection $\pi_A(L)$ of $L$ onto $T_m$ 
    pointwise if and only if $a_{i_j}=a_{i'_j}$ for all $1\leq j\leq r$.

    By the definition of the graph $\calG_{\ualpha}$, there is a path $\ell$ in $\calG_{\ualpha}$ traversing in order the vertices $b_{k_0},\dots, b_{k_r}$. Moreover,  the edge joining $b_{k_{j-1}}$ and $b_{k_j}$  is white if and only if $a_{i_j}= a_{i'_j}$. We thus see that $b$ fixes $\pi_A(L)$ if and only if all edges of $\ell$ are white.

    By \ref{item:irr2}, \ref{item:irr2a} and \ref{item:irr2b}, there exists some $b \in B_n$ such that all edges in the closed ball $N_6(b)$ are white.
    Fix such a $b \in B_n$.
    It follows that $b$ fixes all paths of length $6$ starting at $o_A$,
    i.e.   $b\in \Gamma^{[6]}_v$. 
    Moreover,  there is some path traversing, in order, the vertices $b=b_{k_0},b_{k_1},\dots, b_{k_7}$ in $N_6(b)$ such that the edge $(b_{k_6},b_{k_7})$ is black. 
    Since, for each $j$, $(b_{k_{j-1}},b_{k_{j}})$ is an edge of $\mathcal G_{\ualpha}$,
    there is some $a_{i_j}\in A_m$ such that $\alpha_{i_j}(k_{j-1})=k_j$. 
    Consider the path $\pi_A(L)$ of $T_m$ starting at $o_A$ 
    whose  edges are sequentially labeled by $a_{i_1},\dots, a_{i_7}$. 
    Since the edge $(b_{k_6},b_{k_7})$ is black, $b$ cannot fix $\pi_A(L)$, 
    hence $b\notin \Gamma^{[7]}_v$.

    Since the $A$-tree local action is 2-transitive  and $\Gamma_v^{[6]}\not\leq \Gamma_v^{[7]}$, it follows from Theorem \ref{thm:trofweiss} that the projection of $\Gamma_{\usigma}\leq \Aut(T_m)\times \Aut(T_n)$ to $\Aut(T_m)$ is not discrete. Therefore, $\Gamma_{\usigma}$ is  irreducible by \cite[Proposition 1.2]{burgermozes2000lattices}.
\end{proof}

To prove Theorem \ref{thm:irred}, we need to show that conditions \ref{item:irr1}---\ref{item:irr3} are satisfied with high probability. 
This has already been established for  \ref{item:irr1}, \ref{item:irr2a} and \ref{item:irr3} in Lemma \ref{lem:triple_matchings}, Corollary \ref{cor: full B local action} and Theorem \ref{thm: A_side_generation}  respectively. 
Hence, all that remains is to show that conditions \ref{item:irr2} and \ref{item:irr2b} hold with high probability.
To do so, we investigate the following random variable.

\begin{definition}
	Let $m,n\in \bbN$ with $n$ even. Given  $\alpha\in \FPF_{n}$, define 
	\[P_\alpha := \{\{i,\alpha(i)\}\mid 1\leq i\leq n \}.\] 
	For random $\ualpha=(\alpha_1,\dots,\alpha_{m})\in (\FPF_{n})^m$, define the random variable  
	\[M_{n,m}(\ualpha)\coloneq  \sum_{i<j} \left \lvert P_{\alpha_i}\cap P_{\alpha_j}\right\rvert.  \] 
\end{definition}
\begin{remark}\label{rem:match_vs_blackedges}
	The number of black edges in $\calG_{\ualpha}$ is at most $M_{n,m}(\ualpha)$, with equality precisely when $\ualpha$ has no triple matchings. In particular, $\calG_{\ualpha}$ has no black edges if and only if  $M_{n,m}(\ualpha)=0$.
\end{remark}

 The following proposition demonstrates that $\calG_{\ualpha}$ has at least one black edge with sufficiently high probability:

\begin{lemma}\label{lem:positive_matchings}
	There exists a constant  $N$ such that 
	given any $m,n\in \bbN$ with  $n$ even and $n\geq N$ and
	a random $\ualpha\in (\FPF_n)^m$, 
	then the probability that $\calG_{\ualpha}$ has no black edge is at most  $\left(\frac{2}{3}\right)^{m-1}$.
\end{lemma}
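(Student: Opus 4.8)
The plan is to translate the combinatorial no-black-edge condition into a statement about shared orbits and then exploit independence. By Remark~\ref{rem:match_vs_blackedges}, the graph $\calG_{\ualpha}$ has a black edge exactly when some pair $\alpha_i,\alpha_j$ shares a common orbit; hence the event that $\calG_{\ualpha}$ has no black edge is precisely the event that \emph{no} two of the involutions $\alpha_1,\dots,\alpha_m$ share a common orbit. My first step is therefore to replace this event by the weaker (larger) event that none of $\alpha_2,\dots,\alpha_m$ shares a common orbit with the single involution $\alpha_1$. Since the no-black-edge event forces in particular each of the pairs $\{\alpha_1,\alpha_i\}$ to have no common orbit, it is contained in this weaker event, which immediately gives an upper bound on its probability.

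Next I would condition on the value of $\alpha_1$. For each $2 \le i \le m$, let $E_i$ be the event that $\alpha_i$ shares a common orbit with $\alpha_1$. Given $\alpha_1$, the remaining involutions $\alpha_2,\dots,\alpha_m$ are independent and each uniform on $\FPF_n$, so the events $E_2,\dots,E_m$ are mutually independent. The key observation is that $\bbP(E_i \mid \alpha_1)$ does not depend on the particular value of $\alpha_1$: because the uniform distribution on $\FPF_n$ is conjugation-invariant and all fixed-point-free involutions are conjugate in $\Sym(n)$, this conditional probability equals the probability that two uniformly random elements of $\FPF_n$ share a common orbit. By Corollary~\ref{cor:one_third}, for $n \ge N$ this probability is at least $\tfrac13$, so $\bbP(E_i^c \mid \alpha_1) \le \tfrac23$ for each such $i$.

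Finally, by independence the conditional probability of $\bigcap_{i=2}^m E_i^c$ factors as a product of $m-1$ terms, each at most $\tfrac23$, yielding the bound $\bigl(\tfrac23\bigr)^{m-1}$; and since this estimate is uniform in $\alpha_1$, the law of total probability passes it to the unconditional probability. Combining with the containment from the first step completes the argument. I expect the conjugation-invariance step to be the only genuinely delicate point, since it is what licenses treating the $m-1$ matching events against a fixed $\alpha_1$ as independent Bernoulli trials with a common success probability; the rest of the proof is a containment of events followed by a routine product bound.
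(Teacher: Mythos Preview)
Your proposal is correct and takes essentially the same approach as the paper: both replace the no-black-edge event by the weaker event that none of $\alpha_2,\dots,\alpha_m$ shares an orbit with $\alpha_1$, then use independence together with Corollary~\ref{cor:one_third} to get the product bound $(2/3)^{m-1}$. Your explicit conditioning-and-conjugation-invariance argument is in fact a more careful justification of what the paper asserts tersely when it calls the summands of $Y_{n,m}$ ``independent identically distributed with the same distribution as $M_{n,2}$.''
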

\begin{proof}
    For random $\ualpha = (\alpha_1, \dots, \alpha_m) \in (\FPF_n)^m$, define the random variable 
	$Y_{n,m}(\ualpha)= \sum_{i=2}^{m} \left \lvert P_{\alpha_1}\cap P_{\alpha_{i}}\right\rvert$. 
	It follows from the definition of $M_{n,m}$ that, 
	for each $\ualpha \in (\FPF_n)^m$,
	$0\leq Y_{n,m}(\ualpha)\leq M_{n,m}(\ualpha)$. 
	Therefore $\bbP(M_{n,m}=0)\leq \bbP(Y_{n,m}=0)$.  
	
	By Corollary \ref{cor:one_third} there is a constant $N$ such that for $n\geq N$,   two random elements of $\FPF_n$ do not have a common orbit  with probability at most $2/3$, i.e.,  $\bbP(M_{n,2}=0)\leq \frac{2}{3}$. 
	Since $Y_{n,m}$ is the sum of $m-1$ non-negative independent identically distributed random variables 
	with the same distribution as $M_{n,2}$,
	we have for $n \ge N$ that
	\[\bbP(M_{n,m}=0)\leq \bbP(Y_{n,m}=0) 	\leq \left(\frac{2}{3}\right)^{m-1} \]
    The result now follows from Remark \ref{rem:match_vs_blackedges}.
\end{proof}
We now show the following:
\begin{proposition}\label{prop:white ball}
	Let $m,n\in \bbN$ with  $n> m^8$ and $n$ even. Then with probability at least  $1-\frac{1}{m}$, for some $b\in B_n$, the closed ball $N_6(b) \subset \mathcal{G}_{\ualpha}$ only contains white edges.
\end{proposition}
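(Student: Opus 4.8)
The plan is to show that the only obstruction to finding a vertex with an all-white $6$-ball is an abundance of black edges, and that such an abundance is improbable. Write $B(\ualpha)$ for the number of black edges of $\calG_{\ualpha}$, and call a vertex $b$ \emph{bad} if some black edge lies in $N_6(b)$. Since each vertex of $\calG_{\ualpha}$ has degree at most $m$ (its neighbours lie among $\alpha_1(i),\dots,\alpha_m(i)$), every ball satisfies $|N_6(v)| \le C_m := \sum_{s=0}^{6} m^s = \tfrac{m^7-1}{m-1}$. A black edge $e=\{b_i,b_j\}$ can lie in $N_6(b)$ only if $b\in N_6(b_i)$, so $e$ makes at most $C_m$ vertices bad. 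Hence, if \emph{every} vertex were bad, the black edges would have to cover all $n$ vertices, forcing $B(\ualpha) \ge n/C_m$. By Remark~\ref{rem:match_vs_blackedges}, $B(\ualpha)\le M_{n,m}(\ualpha)$, so it suffices to prove $\bbP(M_{n,m} \ge n/C_m) \le \tfrac1m$.

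Writing $M_{n,m} = \sum_{i<j} X_{ij}$ with $X_{ij} = |P_{\alpha_i}\cap P_{\alpha_j}|$, the next step is to compute the first two moments. Each unordered pair is a common orbit of $\alpha_i,\alpha_j$ with probability $(n-1)^{-2}$ by Lemma~\ref{lem:containing_transpositions}, so $\bbE[X_{ij}] = \binom{n}{2}(n-1)^{-2} = \tfrac{n}{2(n-1)}$ and $\bbE[M_{n,m}] = \binom{m}{2}\tfrac{n}{2(n-1)} \le \binom{m}{2}$. The crucial point is the variance: the variables $\{X_{ij}\}_{i<j}$ are \emph{pairwise uncorrelated}. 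When $\{i,j\}\cap\{i',j'\}=\emptyset$ they are functions of disjoint, independent involutions, hence independent; when they share exactly one index, expanding $X_{ij}X_{ij'} = \sum_{o,o'} \mathbb{1}[o\in P_{\alpha_i}]\,\mathbb{1}[o'\in P_{\alpha_i}]\,\mathbb{1}[o\in P_{\alpha_j}]\,\mathbb{1}[o'\in P_{\alpha_{j'}}]$ and evaluating with Lemma~\ref{lem:containing_transpositions} gives $\bbE[X_{ij}X_{ij'}] = \tfrac{n^2}{4(n-1)^2} = \bbE[X_{ij}]\bbE[X_{ij'}]$, so the covariance vanishes exactly. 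Consequently $\Var(M_{n,m}) = \binom{m}{2}\Var(X_{12})$, and a direct evaluation shows $\Var(X_{12})$ is bounded above by a universal constant $C_0$ (indeed $\Var(X_{12}) \to \tfrac12$ as $n\to\infty$).

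Finally, I would apply Chebyshev's inequality. Because $n > m^8$ and $C_m = \tfrac{m^7-1}{m-1}$, the threshold $T := n/C_m$ satisfies $T > \tfrac{m^8(m-1)}{m^7-1} > m(m-1) \ge 2\bbE[M_{n,m}]$, whence $T - \bbE[M_{n,m}] > \binom{m}{2}$. Chebyshev then gives
\[
\bbP\!\left(M_{n,m} \ge T\right) \le \frac{\Var(M_{n,m})}{\bigl(T-\bbE[M_{n,m}]\bigr)^2} \le \frac{\binom{m}{2}\,C_0}{\left(\binom{m}{2}\right)^{2}} = \frac{2C_0}{m(m-1)},
\]
which is at most $\tfrac1m$ for all sufficiently large $m$ (the remaining small values of $m$ being handled directly, the constant $C_0$ being absorbed into the range of validity). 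Combined with the deterministic reduction of the first paragraph, this shows a vertex with an all-white $6$-ball exists with probability at least $1-\tfrac1m$.

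The main obstacle is the variance estimate. A crude bound treating shared-index pairs as genuinely correlated would only give $\Var(M_{n,m}) = O(m^3)$, hence a failure probability of order $1/m$, which is too weak to clear the $1/m$ threshold with any room to spare (recall $\bbE[M_{n,m}]$ and the threshold $n/C_m$ are both of order $m^2$, so the proof lives right at the boundary $n \approx m^8$). The exact cancellation making the $X_{ij}$ pairwise uncorrelated — so that $\Var(M_{n,m}) = O(m^2)$ — is precisely what upgrades this to a comfortable $O(1/m^2)$ bound; verifying that cancellation is the one genuinely computational step.
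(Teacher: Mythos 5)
Your argument is correct, but the probabilistic step takes a genuinely different route from the paper's. The deterministic reduction is the same in spirit (each black edge can spoil only boundedly many vertices, so a shortage of black edges forces an all-white ball), but you place the threshold at $T=n/C_m\approx m^2$, which is only a constant factor above $\bbE(M_{n,m})\approx m^2/4$; Markov's inequality is then useless at that threshold and you are forced into a second-moment argument. Your key computation --- that the $X_{ij}$ are exactly pairwise uncorrelated, because $\bbE(X_{ij}X_{ij'})=\sum_{o,o'}\bbP(o,o'\in P_{\alpha_i})\cdot(n-1)^{-2}=(n/2)^2(n-1)^{-2}=\bbE(X_{ij})\,\bbE(X_{ij'})$ using $\lvert P_{\alpha_i}\rvert\equiv n/2$ --- is correct, gives $\Var(M_{n,m})=\binom{m}{2}\Var(X_{12})=O(m^2)$, and Chebyshev then delivers a failure probability of order $1/m^2$. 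The paper instead puts the threshold at $\tfrac{m^3}{2}$ black edges, a factor of $m$ above the mean, so plain Markov (Lemma \ref{lem:match_bound}) already gives $\tfrac1m$; it then counts only vertices within distance $5$ of a black endpoint, at most $2m^5\cdot\tfrac{m^3}{2}=m^8<n$ of them, which is all the downstream application in Lemma \ref{lem:irred_conditions} requires. Note that your stricter reading of ``edge contained in $N_6(b)$'' (both endpoints in the ball) costs you a per-edge count of $C_m\approx m^6$ rather than $2m^5$, and with that count the paper's Markov threshold would spoil up to roughly $\tfrac{m^3}{2}\cdot m^6\approx m^9$ vertices, which can exceed $n$; so your Chebyshev upgrade is not gratuitous --- it is what lets you prove the stronger, radius-robust statement under the same hypothesis $n>m^8$. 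Two small points to tidy: the ``sufficiently large $m$'' caveat can be removed by checking directly that $\Var(X_{12})\le 1$ for every even $n$, so that $\tfrac{2C_0}{m(m-1)}\le\tfrac1m$ for all $m\ge 3$, while $m\le 2$ is immediate ($M_{n,1}=0$, and for $m=2$ Markov at the threshold $T>2$ with mean at most $1$ suffices); and you should state explicitly which convention for ``edge in $N_6(b)$'' you are using, since that is what fixes the constant $C_m$.
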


We first prove some lemmas that are used in the proof of Proposition~\ref{prop:white ball}.
\begin{lemma}\label{lem:pair_orbit_count}
	For any even $n$,   $\bbE(M_{n,2})=\frac{n}{2(n-1)}\leq 1$.
\end{lemma}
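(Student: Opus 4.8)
The plan is to compute $\bbE(M_{n,2})$ directly via linearity of expectation, treating $M_{n,2}$ as a sum of indicator variables over potential common orbits. First I would note that $P_{\alpha_1}\cap P_{\alpha_2}$ consists exactly of those unordered pairs $\{i,j\}$ of distinct elements of $\{1,\dots,n\}$ that are simultaneously an orbit of $\alpha_1$ and of $\alpha_2$. Accordingly, for each such pair $\{i,j\}$ I would introduce the indicator random variable $X_{\{i,j\}}$, equal to $1$ when $\{i,j\}\in P_{\alpha_1}\cap P_{\alpha_2}$ and $0$ otherwise, so that $M_{n,2}=\sum_{\{i,j\}}X_{\{i,j\}}$, the sum ranging over all $\binom{n}{2}$ pairs.

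Next I would compute $\bbE(X_{\{i,j\}})$. Since $\alpha_1$ and $\alpha_2$ are chosen independently, $\bbE(X_{\{i,j\}})=\bbP(\{i,j\}\in P_{\alpha_1})\cdot\bbP(\{i,j\}\in P_{\alpha_2})$, and each factor is the probability that a single uniformly random element of $\FPF_n$ has $\{i,j\}$ as an orbit. Applying Lemma~\ref{lem:containing_transpositions} with $k=1$ and the single pair $O_1=\{i,j\}$, this probability is $\frac{(n-3)!!}{(n-1)!!}=\frac{1}{n-1}$, independent of which pair is chosen. Hence $\bbE(X_{\{i,j\}})=\frac{1}{(n-1)^2}$ for every pair.

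Summing over the $\binom{n}{2}=\frac{n(n-1)}{2}$ pairs then yields
\[
\bbE(M_{n,2})=\frac{n(n-1)}{2}\cdot\frac{1}{(n-1)^2}=\frac{n}{2(n-1)},
\]
and the bound $\frac{n}{2(n-1)}\le 1$ reduces to $n\ge 2$, which holds for all even $n$. I do not expect a genuine obstacle here: the argument is a one-line second-moment-free computation. The only points that require care are applying Lemma~\ref{lem:containing_transpositions} in the case $k=1$ to get the per-involution orbit probability $\tfrac{1}{n-1}$, and keeping the bookkeeping in terms of \emph{unordered} pairs so that the count is $\binom{n}{2}$ rather than $n(n-1)$.
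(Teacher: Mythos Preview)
Your proof is correct and follows essentially the same approach as the paper: linearity of expectation over indicator variables, with the per-orbit probability $\tfrac{1}{n-1}$ supplied by Lemma~\ref{lem:containing_transpositions}. The only cosmetic difference is that the paper fixes $\alpha_1$ by symmetry and sums over its $n/2$ actual orbits (so each indicator has expectation $\tfrac{1}{n-1}$), whereas you sum over all $\binom{n}{2}$ potential pairs using independence (so each indicator has expectation $\tfrac{1}{(n-1)^2}$); the arithmetic of course agrees.
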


\begin{proof}
	Let $(\alpha,\alpha')\in (\FPF_n)^2$ be a random element.  
	As in Lemma \ref{lem:prob_match}, we may assume $\alpha$ is fixed and $\alpha'$ is chosen at random. 	
	Let $C_1, \dots, C_r$ be the orbits of $\alpha$, with $r=\frac{n}{2}$, and let $Y_i$ be the indicator random variable associated to the event that $C_i$ is an orbit of $\alpha'$. Then $M_{n,2}=\sum_{i=1}^r Y_i$, and $\bbP(Y_i=1)=\frac{1}{n-1}$ by Lemma \ref{lem:containing_transpositions}. Therefore,
	by the linearity of expectation,
	$\bbE(M_{n,2})=\frac{n}{2(n-1)}\leq 1$ as required.
\end{proof}

\begin{lemma}\label{lem:match_bound}
 For any $A>0$, 	$\bbP(M_{n,m}\geq A)\leq \frac{m^2}{2A}$.
\end{lemma}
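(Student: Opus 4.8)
The plan is to apply Markov's inequality, for which it suffices to bound the expectation $\bbE(M_{n,m})$. The random variable decomposes as a sum over pairs, $M_{n,m}(\ualpha)=\sum_{i<j}\lvert P_{\alpha_i}\cap P_{\alpha_j}\rvert$, and each summand $\lvert P_{\alpha_i}\cap P_{\alpha_j}\rvert$ depends only on the two independent random involutions $\alpha_i,\alpha_j\in\FPF_n$. Hence each summand has exactly the same distribution as $M_{n,2}$, so by Lemma~\ref{lem:pair_orbit_count} we have $\bbE(\lvert P_{\alpha_i}\cap P_{\alpha_j}\rvert)=\bbE(M_{n,2})=\frac{n}{2(n-1)}\leq 1$.

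By linearity of expectation (no independence between the summands is needed here), I would then sum over the $\binom{m}{2}$ pairs to obtain
\[
\bbE(M_{n,m})=\binom{m}{2}\,\bbE(M_{n,2})\leq \binom{m}{2}=\frac{m(m-1)}{2}\leq \frac{m^2}{2}.
\]
Finally, since $M_{n,m}\geq 0$, Markov's inequality gives
\[
\bbP(M_{n,m}\geq A)\leq \frac{\bbE(M_{n,m})}{A}\leq \frac{m^2}{2A},
\]
as required.

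There is really no main obstacle in this argument: the only slightly substantive observation is that distinct summands all have the same marginal distribution as $M_{n,2}$, which follows immediately from the $\alpha_i$ being i.i.d. uniform on $\FPF_n$, and that linearity of expectation does not require these summands to be independent. The heavy lifting was done in Lemma~\ref{lem:pair_orbit_count}, so this lemma is essentially a one-line consequence once the expectation is computed.
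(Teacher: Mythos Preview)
Your proof is correct and follows essentially the same approach as the paper: decompose $M_{n,m}$ into $\binom{m}{2}$ summands each distributed as $M_{n,2}$, use Lemma~\ref{lem:pair_orbit_count} and linearity of expectation to bound $\bbE(M_{n,m})\leq \frac{m^2}{2}$, then apply Markov's inequality.
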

\begin{proof}
	Observe that $M_{n,m}$ is a sum of $\frac{m(m-1)}{2}$ random	 variables, each having the same probability distribution as $M_{n,2}$. By Lemma \ref{lem:pair_orbit_count} and linearity of expectation, we see that $\bbE(M_{n,m})\leq \frac{m(m-1)}{2}\leq \frac{m^2}{2}$. The result now follows by applying Markov's inequality.
\end{proof}
\begin{proof}[Proof of Proposition \ref{prop:white ball}]
	By Lemma \ref{lem:match_bound}, $\bbP(M_{n,m}\geq \frac{m^3}{2})\leq \frac{1}{m}$. To prove Proposition \ref{prop:white ball}, it thus suffices to show that if  $M_{n,m}(\ualpha)< \frac{m^3}{2}$, then  all edges in the closed ball $N_6(b)$  are white for some vertex $b$. 
	Indeed, if $M_{n,m}(\ualpha)< \frac{m^3}{2}$ then by Remark~\ref{rem:match_vs_blackedges}, $\calG_{\ualpha}$ contains fewer than $\frac{m^3}{2}$ black edges. 
	Since vertices of  $\calG_{\ualpha}$ have valence at most $m$, 
	any edge of $\calG_{\ualpha}$
	has at most  $2m^5$ vertices a distance 5 or less from it.
	Thus there are at most
	\[2m^5\times \frac{m^3}{2}=m^8\]
	vertices that are at a distance of 5 or less from the endpoint of a black edge. 
	Hence, as $m^8 < n$, the closed ball $N_6(b)$ contains no black edge for some vertex $b$.
\end{proof}

\begin{proof}[Proof of Theorem \ref{thm:irred}]
    As noted in the paragraph after Theorem \ref{thm:irred}, we may assume $n>m^8$.
	Lemma \ref{lem:triple_matchings}, Proposition \ref{prop:white ball}, Corollary \ref{cor: full B local action}, Lemma \ref{lem:positive_matchings} and Theorem \ref{thm: A_tree_generation_alternative}  each give upper bounds for the probability than one of the conditions \ref{item:irr1}---\ref{item:irr3} in Lemma \ref{lem:irred_conditions} do not hold. Taking a union bound, we see that there is a constant $N$ such that if $n\geq \max(m^8,N)$, the probability that at least one of \ref{item:irr1}---\ref{item:irr3} is not satisfied is at most \[\frac{4m^3}{n}+\frac{1}{m}+\frac{C_1}{m}+\left(\frac{2}{3}\right)^{m-1}+\frac{C_2}{m}\] for some constants $C_1$ and $C_2$.  Therefore for some sufficiently large constant  $C$, we deduce that for all even $n > m^8$ 
	conditions  \ref{item:irr1}---\ref{item:irr3} of Lemma \ref{lem:irred_conditions} are satisfied with probability at least $1-\frac{C}{m}$. 
	Lemma \ref{lem:irred_conditions} ensures that the group $\Gamma_{\ualpha}$ satisfies 
	conclusion (\ref{it:irreducible})
	 of Theorem \ref{thm:irred} with probability at least $1-\frac{C}{m}$.
	Theorem~\ref{thm:irred} now follows from the discussion immediately after its statement.
\end{proof}

\bibliography{refs}
\bibliographystyle{amsalpha}
 
\end{document}